\numberwithin{equation}{section}
\newcommand{\ID}{\mathds{D}}
\newcommand{\IK}{\mathds{K}}
\newtheorem{mydef}{Definition}
\newcommand{\abl}[2]{\frac{\partial #1}{\partial #2}}
\theoremstyle{definition}
\newtheorem{Def}{Definition}[section]
\theoremstyle{definition}
\newtheorem{Bem}[Def]{Remark}
\newtheorem{Korollar}[Def]{Corollary}
\newtheorem{Example}[Def]{Example}
\newtheorem{Lemma}[Def]{Lemma}
\theoremstyle{definition}
\newtheorem{Theorem}{Theorem}
\DeclareMathOperator*{\esssup}{ess\,sup}
\title[Local existence in micro-macro models]{Local existence of strong solutions to micro-macro models for reactive transport in evolving porous media}
\author{Stephan Gärttner$^1$}
\thanks{Correspondence: gaerttner@math.fau.de} 
\author{Peter Knabner$^{1,2}$}
\author{Nadja Ray$^2$}
\address[1]{Department of Mathematics, Friedrich-Alexander University Erlangen-Nuremberg, Cauerstr. 11, 91058 Erlangen, Germany}
\address[2]{Stuttgart Center for Simulation Science (SC SimTech), University of Stuttgart, Pfaffenwaldring 5a, 70569 Stuttgart, Germany}
\date{January 28, 2022}
\begin{document}

\maketitle
%\tableofcontents

\begin{abstract}
Two-scale models pose a~promising approach in simulating reactive flow and transport in evolving porous media. Classically, homogenized flow and transport equations are solved on the macroscopic scale, while effective parameters are obtained from auxiliary cell problems on possibly evolving reference geometries (micro-scale). Despite their perspective success in rendering lab/field-scale simulations computationally feasible, analytic results regarding the arising two-scale bilaterally coupled system often restrict to simplified models. In this paper, we first derive smooth-dependence results concerning the partial coupling from the underlying geometry to macroscopic quantities. Therefore, alterations of the representative fluid domain are described by smooth paths of diffeomorphisms. Exploiting the gained regularity of the effective space- and time-dependent macroscopic coefficients, we present local-in-time existence results for strong solutions to the partially coupled micro-macro system using fixed-point arguments. What is more, we extend our results to the bilaterally coupled diffusive transport model including a~level-set description of the evolving geometry.
\end{abstract}
\vspace{0.5cm}
\noindent\textbf{MSC classification: } {35A01, 35B30, 35M30, 35Q49}\\
\noindent\textbf{keywords: } \textit{evolving porous media, level-set equation, reactive flow \& transport,\\
local-in-time existence, strong solutions} 

\section{Introduction}
\label{SEC:Introduction}
Reactive transport in evolving porous media gained increasing interest over the last dec\-ades due to the wide range of applicability from enhanced oil recovery to biofilm growth~\cite{SeigneurReview2019}. Multi\-scale models pose a~powerful scheme to capture not only the flow and solute transport within the fluid, but also the evolution of the porous medium's properties due to structural alteration, e.g.\ triggered by agglomeration/precipitation or dissolution processes.  A~derivation of such models is provided by periodic homogenization of pore-scale models which are in turn based on first principles. For a general introduction to multi-scale approaches in reactive transport modelling, we refer to~\cite{MolinsKnabnerReview2019}.

In two-scale models for reactive transport in evolving porous media such as derived in~\cite{vanNoorden09,PhaseFieldModelPop}, flow and reactive transport equations are typically solved on the macroscopic domain. These PDEs encompass several effective parameters as coefficients such as porosity or diffusivity that are connected to the underlying microscopic geometry. Due to the evolution of the porous medium, the arising parameters depend on both space and time. As this evolution is for instance often driven by chemical reactions, i.e.\ dependent on the solution of the transport equation, the type of models considered here inherently features a~two-way coupling between the scales complicating analytical treatment. We illustrate the coupling of the macroscopic equations to the underlying geometry resolved in representative unit cells in Figure~\ref{fig:Coupling}. Henceforth, we distinguish two different types of coupling. The two-way coupling between both scales will be referred to as \textit{full coupling}. Commonly, also a~simplified coupling structure is investigated in the literature, disregarding the back-coupling from the macro to the micro-scale, cf.~Figure~\ref{fig:Coupling}. We refer to the arising one-sided coupling as a~\textit{partial coupling} scenario.

\begin{figure}[!h]
    \centering
    \includegraphics[width=0.8\textwidth]{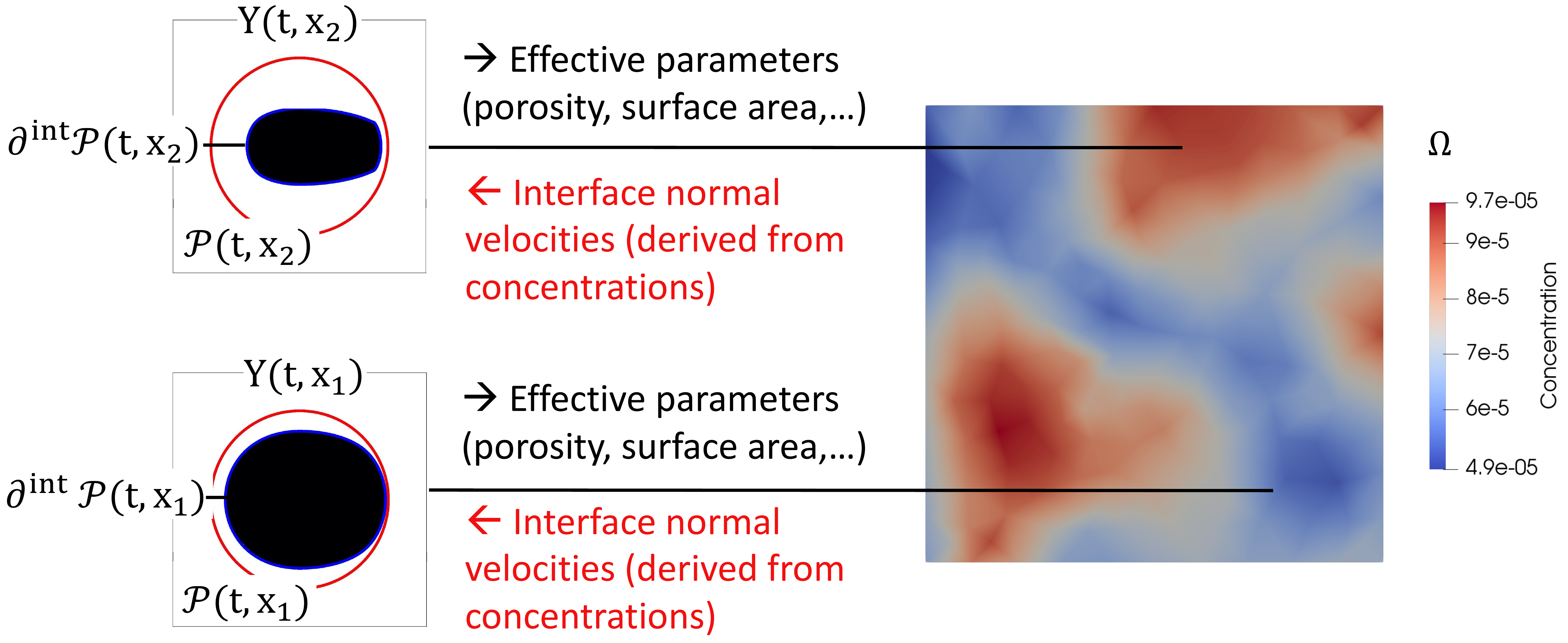}
    \caption{Schematic presentation of micro-macro coupling in multi-scale reactive transport models. Geometry dependent effective parameters influence the macroscopic flow and solute transport, see black-colored arrows. In the fully coupled scenario, solute concentrations on the macroscopic domain $\Omega$ additionally prescribe the evolution of the underlying microscopic geometry, see red-colored arrows. }
    \label{fig:Coupling}
\end{figure}

An additional challenge is posed by a~suitable framework to capture the evolving geometries. A variety of convenient methods is outlined in~\cite{EvolvingMicroAlber} regarding the description of evolving micro-structures. Commonly, level-set methods or phase-field approaches are used, in which case the macroscopic concentrations either prescribe a~normal interface velocity, cf.~\cite{vanNoorden09}, or induce a~source term to the phase field~\cite{PhaseFieldModelPop}, respectively. However, especially for highly symmetrical shapes, geometry evolution is often modelled and simulated in a~simplified way via ODEs for typical characteristic parameters such as the porosity in case of~\cite{Schulz2017,SchulzBiofilm} or the thickness of the precipitation layer as in~\cite{vanNoorden_strip}. Furthermore, models can also handle the geometry evolution implicitly by computing porosity from conservation of mass and deducing all geometry-related effective quantities therefrom by heuristic laws, cf.~\cite{Jambhekar16,Soulaine2016}, facilitating the numerical treatment. In \cite{PaperGaerttner2020,bastidas2020adaptive}, micro-macro models including fully resolved microscopic geometries have been investigated numerically. Regarding the most general setting, the associated evolution PDE (level-set or phase-field equation) is solved on reference geometries virtually attached to each degree of freedom of the macroscopic discretization.

In the literature, different approaches are present to obtain existence results for the effective reactive transport model including geometry alterations. In~\cite{PETER09, Gahn2021}, perforated microscopic domains are mapped onto a~periodic reference domain using diffeomorphisms. Existence results for the transformed microscopic equations on the reference domain are then leveraged to the effective model by means of two-scale convergence. Typically, the upscaling process of the transformed model is of increased complexity due to the appearance of additional factors in the highest-order terms arising from the transformation itself. Restricting to diffusion-reaction systems, \cite{PETER09} presents existence results to the fully coupled system with geometry evolution modelled via an~ODE for the determinant of the deformation gradient. As such, only effects emerging from changes in the pore-space volume are reflected. Likewise, diffusion-advection-reaction equations are treated in~\cite{Gahn2021} assuming an~a-priori given geometry evolution (partial coupling).

On the other hand, existence results can also be derived by investigating the effective model itself.
Existence of weak solutions to a~homogenized diffusion-driven model with partial and full coupling between the scales was shown in~\cite{PhdMeier} using transformations of the model equations onto fixed reference domains. However, the analysis performed requires the smoothness of the effective diffusion tensor as an~additional assumption (partial coupling) or neglects its evolution with time completely (full coupling). Likewise, in~\cite{Schulz2017}, fully coupled systems with diffusion-driven transport are investigated. Furthermore,~\cite{Schulz2020} considered partially coupled systems with advection including degenerating hydrodynamic parameters. Yet, a~key assumption is the a-priori knowledge of the relation between effective parameters and the porosity which plays the role of an~order parameter. 

In this paper, we follow the latter approach performing analysis directly on the effective micro-macro model. As a~key result, we prove smooth dependence of effective parameters on the underlying geometry. More precisely, we consider smoothly bounded underlying microscopic geometries and restrict to setups precluding degeneracy of effective parameters. By describing geometry alterations by smoothly parameterized paths of diffeomorphisms, we make use of an~additional parameter characterizing its state. As such, the presented framework covers a~broad range of geometry evolution which does not rely on parameterizability by a~single physical quantity such as the porosity. On the one hand, we use these results to investigate the existence of strong local-in-time solutions to the partially coupled model potentially including advective solute transport. On the other hand, we treat the scenario of full coupling and diffusion-driven transport. In that case, the macroscopic concentrations are coupled to the full level-set equation for geometry evolution. Thereby, we extend similar results known in the literature for simple and restrictive geometries like cubes or spheres, cf.~\cite{Schulz2017}. 

Our paper is outlined as follows: In Section~\ref{SEC:Model}, we present an~established model for reactive flow and transport in evolving porous media. Restricting to solute transport by diffusion only in Section~\ref{SEC:DiffTransport}, we derive smooth dependence results for the diffusion tensor on the geometry and prove local-in-time existence to the partially and fully coupled model. Establishing analogous results for the permeability tensors in Section~\ref{SEC:AdvTransport}, smooth dependence of the Darcy velocity field on the underlying geometry is shown. Finally, existence of solutions to the partially coupled model including advective solute transport is proven in Section~\ref{SEC:AdvectExistence}.  

\section{Model}
\label{SEC:Model}

This research is based on a~micro-macro model for reactive flow and transport in evolving porous media introduced by~\cite{vanNoorden09} where its derivation from a~detailed pore scale model by formal homogenization arguments was performed in two spatial dimensions. A~generalization to three dimension was derived in~\cite{Schulz2017} by modification of the original deduction. More precisely, the model under consideration consists of a~transport equation for a~solute chemical species $c$ on the macroscopic domain of interest $\Omega \subset \mathbb{R}^d$ with $d\in \{2,3\}$:
\begin{align}
\label{EQ:transport}
    \partial_t (\phi c) + \nabla \cdot({v} c) - \nabla\cdot(\mathbb{D}\nabla c) =   \sigma f(c)  \quad \text{in } (0,T)\times \Omega,
\end{align}
where $\phi$ denotes the porosity, $\sigma$ the specific surface area, $v$ the flow velocity, $\mathbb{D}$ the effective diffusion tensor and $f$ a~source/sink term due to heterogeneous reactions. As such, the parabolic equation~(\ref{EQ:transport}) models solute transport by diffusion and advection processes as well as chemical reactions at the fluid-solid interface.

The effective parameters in~(\ref{EQ:transport}) are derived from unit-cells $Y= \left(-\frac{1}{2}, \frac{1}{2} \right)^d$ representing a~local reference elementary volume of the underlying geometry. The respective exterior boundary is denoted by $\partial Y$. For the following, we consider solid inclusions compactly contained within $Y$. Let us denote the remaining fluid domain therein by $\mathcal{P}$ and the interior boundary by $\partial  ^\text{int}\mathcal{P}$, cf.~Figure~\ref{fig:Coupling}. 
The diffusion tensor is then given as $\mathbb{D}_{i,j} := \int_{\mathcal{P}}\left(\partial_{y_i}\zeta_j + \delta_{ij}\right)\,dy$ for $i,j\in\lbrace 1,\dots,d\rbrace$ with Kronecker delta $\delta_{ij}$, where $\zeta_j$ are the solutions to the following elliptic problems:
\begin{align}
\label{Def: DiffusionTensor}
- \nabla_y\cdot(\nabla_y \zeta_j) &= 0 && \text{in } {\mathcal{P}}, \nonumber \\
\nabla_y \zeta_j\cdot\nu &=  - e_j\cdot\nu && \text{on } {\partial^\text{int}\mathcal{P}},\\
\zeta_j \text{ periodic in } y, &\quad \int\limits_{\mathcal{P}} \zeta_j \; dy = 0,  \nonumber
\end{align}
with outer unit normal $\nu$. For the derivation in the context of evolving geometries, see~\cite{vanNoorden09}. In this case, the evolution is reflected in time-dependent domains $\mathcal{P}=\mathcal{P}(t)$. Note that the shape of the elliptic problem~(\ref{Def: DiffusionTensor}) is identical to the one derived under the assumption of fixed underlying geometries in~\cite{bookHornung}, leading to fixed domains $\mathcal{P}$.
We refer to~(\ref{Def: DiffusionTensor}) as diffusion cell-problems. 

The advective flow field $v$ and the associated pressure field $p$ are given by Darcy's equation:
\begin{align}
\label{EQ:Darcy}
   v &= -\frac{\mathbb{K}}{\mu} \nabla p  &&\text{in } \Omega,\; t\in(0,T) , \\
\nabla \cdot v &=0 && \text{in } \Omega,\; t\in(0,T),  \nonumber 
\end{align}
with viscosity $\mu$ and permeability tensor $\mathbb{K}$. As $\mu$ is a constant being characteristic to the solvent, we set it to one for convenience. Note that the condition of a~divergence-free velocity field in~(\ref{EQ:Darcy}) is a~common simplification as discussed in~\cite{Gaerttner2020bPreprint}.  Due to the much larger time-scale of geometry evolution compared to fluid flow, it is justified to disregard the flow induced by fluid displacement arising from a~variable pore-space volume. 

The permeability tensor in~(\ref{EQ:Darcy}) is defined as
$\mathbb{K}_{i,j} := \int_\mathcal{P} \omega_j^i \; dy$ for $i,j\in\lbrace 1,\dots,d\rbrace$, where $(\omega_j, \pi_j)$ are the solutions to the Stokes-type problems,  cf.~\cite{vanNoorden09}:
\begin{align}
\label{Def: PermTensor}
-\Delta_y \omega_j + \nabla_y \pi_j &= e_j && \text{in } {\mathcal{P}} ,\nonumber \\
\nabla_y \cdot \omega_j  &=0 && \text{in } {\mathcal{P}} ,\\
\omega_j &=0 &&\text{on } {\partial^\text{int}\mathcal{P}}, \nonumber \\
\omega_j , \; \pi_j \text{ periodic in } y, &\quad \int\limits_{\mathcal{P}}\pi_j \; dy = 0.  \nonumber
\end{align}
Likewise, the shape of the Stokes-type problem is identical to the one derived under the assumption of fixed underlying geometries in \cite{bookHornung}. In case of an~evolving geometry, we again obtain $\mathcal{P}=\mathcal{P}(t)$. We refer to~(\ref{Def: PermTensor}) as permeability cell-problems. Note that both cell-problems (\ref{Def: DiffusionTensor}), (\ref{Def: PermTensor}) result in symmetric positive semi-definite tensors $\ID$, $\IK$.

 Finally, we assume the existence of a~level-set function $\Phi_0:\Omega\times Y \to \mathbb{R}$ characterizing the solid part within the unit-cell attached to the macroscopic point $x\in\Omega$ at initial time by $\{\Phi_0(x,\cdot)>0\}$. Consequently,  $\{\Phi_0(x,\cdot)<0\}$ refers to the fluid domain~$\mathcal{P}$ and $\{\Phi_0(x,\cdot)=0\}$ denotes the fluid-solid interface $\partial^\text{int}\mathcal{P}$. We require that the gradient of $\Phi_0$ does not vanish along the zero-level-set to ensure the representation of a~submanifold of codimension one. For a~normal interface velocity field $v_n:(0, T)\times \Omega \times Y \to \mathbb{R}$ the evolution of $\Phi_0$ is described by the level-set equation for $\Phi:(0, T)\times \Omega \times Y \to \mathbb{R}$, cf.~\cite{Sethian99}:
 \begin{align}
\label{EQ:levelSet}
\frac{\partial \Phi}{\partial t} + v_n |\nabla_y \Phi| &= 0 &&\quad \text{in }  (0, T)\times \Omega \times Y,  \\
\Phi(0,\cdot,\cdot) &= \Phi_0  &&\quad \text{in } \Omega \times Y.\nonumber
\end{align}
The different sub-domains of $Y$ (fluid-domain, solid-domain, interface) at a certain time $t$ are encoded by the sign of $\Phi(t,\cdot)$ according to the convention for the initial condition $\Phi_0$ above.
Eventually, the interface velocity is coupled to the chemical reaction in a~mass conserving way, e.g.
\begin{align}
\label{DEF:NormalVel}
    v_n(t,x,y) =- v_{\text{mod}}(y) f(c(t,x)), 
\end{align}
potentially using a~scalar speed modification function $v_{\text{mod}}$ which allows for a~varying normal interface velocity within a~unit-cell, cf.~\cite{PaperGaerttner2020}.

\section{Smooth parameter dependence and existence for diffusive transport}
\label{SEC:DiffTransport}
In this section, we consider a~special case of the model introduced in Section~\ref{SEC:Model}. Neglecting advective transport for the solute species, we focus on the coupling from the micro to the macro-scale conveyed by $\phi,\sigma$ and $\ID$ only, facilitating the analysis. After discussing the setup in more detail, this section first considers the smoothness of the partial coupling. Therefore, we investigate the dependence of the diffusion tensor $\ID $ on deformations of the microscopic geometry via diffeomorphisms in Section~\ref{SEC:diff} as the principle step to establish existence results for the partially coupled problem in Section~\ref{SEC:DiffExist1Sided}. Moreover, we show the induction of suitable diffeomorphisms by the level-set equation in Section~\ref{SEC:FlowInducedDiffeomorphisms}, ultimately leading to local-in-time existence results for the fully coupled problem in Section~\ref{SEC:DiffExist2Sided}.

\subsection{Setting}
\label{SEC: DiffSetting}
 For the following, we consider solute transport by diffusion only. As the term $\partial_t (\phi c)$ in (\ref{EQ:transport}) is difficult to handle analytically, it is shifted to the right-hand side, cf.~\cite{Schulz2017}. Therefore, we write  
\begin{align}
\label{EQ:transportDIFF}
    \phi \partial_t c - \nabla\cdot(\mathbb{D}\nabla c) =   \sigma f(c) - \partial_t \phi c \quad \text{in } (0,T)\times \Omega,
\end{align}
rendering the equations for flow and permeability determination~(\ref{EQ:Darcy}),~(\ref{Def: PermTensor}) superfluous. Hence, it is sufficient to close the model regarded in this section by~(\ref{Def: DiffusionTensor}), (\ref{EQ:levelSet}) and~(\ref{DEF:NormalVel}). We emphasise that the following considerations assume good-natured conditions such as the smoothly bounded solid geometry being compactly contained within the unit-cell~$Y$. By considering local-in-time estimates, this setting is maintained by an~appropriate choice of initial conditions. Using diffeomorphisms to describe solid alteration, we particularly exclude clogging scenarios and degenerating equations. 

\subsection{Continuous dependence of diffusion tensors}
\label{SEC:diff}
In order to prove existence to the model described in Section~\ref{SEC: DiffSetting}, we make extensive use of existence theory for linear parabolic equations, cf. Theorem~\ref{TheoremLadyzenkaya} in the appendix. As such, we require moderate regularity for $\ID$ as the coefficient of the leading order term in~(\ref{EQ:transportDIFF}).

Therefore, this section is concerned with the dependence of the diffusion tensor $\mathbb{D}$ on the evolving geometry. The method presented consists of three steps: At first, we establish higher regularity for weak solutions to the diffusion cell-problem~(\ref{Def: DiffusionTensor}). Although using standard methods, we state the results in detail due to the uncommon periodic boundary conditions. Based on that, a~mapping between the geometry and the elliptic PDE's solution of desired regularity is constructed using the implicit function theorem following the technique of~\cite{Henry}. Finally, we extend our smoothness results from the solutions $\zeta_j$, $j\in \{1,\dots,d\}$, of~(\ref{Def: DiffusionTensor}) to the diffusion tensor $\ID$ which is given as an~affine-linear functional of $\zeta_j$.

For the formulation of problem~(\ref{Def: DiffusionTensor}) and our regularity result Lemma~\ref{Lemma: H2regular}, we assume $\partial^\text{int}\mathcal{P}$ to be $C^{2,1}$-regular. As it becomes apparent in Theorem~\ref{THEOREM1}, it is necessary to consider the full class of elliptic PDEs of type~(\ref{Def: DiffusionTensor}) with general source term and Neumann boundary conditions.   
In a~first step towards a~suitable weak formulation of problem (\ref{Def: DiffusionTensor}), we introduce periodic Sobolev spaces according to~\cite{CioranescuIntro}. Let $\mathcal{P}_\text{ext} \subset \mathbb{R}^d$ denote the perforated domain obtained by periodic extension of $\mathcal{P}$ in $\mathbb{R}^d$.
Then we define $H_\#^k (\mathcal{P})$ for $k\in \mathbb{N}$ as the closure of Y-periodic functions in $C^\infty(\mathcal{P}_\text{ext})$ with respect to the $H^k$-norm. 
For the weak formulation of the diffusion cell-problem with Neumann boundary conditions, we introduce the following function spaces
\begin{align*}
    H^k_{\#,0}(\mathcal{P}) = \left\lbrace v\in H_\#^k ( \mathcal{P}):  \; \int_{\mathcal{P}} v \; dy =0 \right\rbrace,
\end{align*}
equipped with the norm $||u||_{ H^k_{\#,0}(\mathcal{P})} = ||u_{|\mathcal{P}}||_{ H^k(\mathcal{P})} $.

Accordingly, the weak problem with general source term $f$ and Neumann boundary condition $g$ reads: Find \mbox{$u\in  H^1_{\#,0}(\mathcal{P})$} such that
\begin{align}
\label{EQ: DiffWeakForm}
    \int_\mathcal{P} \nabla u \cdot \nabla v \; dy - \int_{\partial^\text{int}\mathcal{P}} g v \; d\sigma = \int_\mathcal{P} f v \; dy, \quad \forall v\in  H^1_{\#}(\mathcal{P}),
\end{align}
with $g\in L^2(\partial^\text{int}\mathcal{P})$, $f\in L^2(\mathcal{P})$. Apparently, the compatibility condition $\int\limits_\mathcal{P} f \; dx =- \int\limits_{\partial^\text{int}\mathcal{P}} g \; d\sigma$ is necessary for solvability. 

Next, we establish unique solvability for the above problem and present conditions ensuring solutions to be of higher regularity. Particularly, a~$C^{2,1}$-regular interior boundary $\partial^\text{int}\mathcal{P}$ proves to be sufficient for equations (\ref{Def: DiffusionTensor}) to hold in the $L^2$-sense and therefore point-wise almost everywhere in~$\mathcal{P}$. 

\begin{Lemma} \textit{Elliptic Regularity}\\
Problem (\ref{EQ: DiffWeakForm}) has a~unique solution $u\in H^1_{\#,0}(\mathcal{P})$. Let $k$ be an~integer number $k\geq 0$. If the interior boundary $\partial^\text{int}\mathcal{P}$ is furthermore  $C^{k+2,1}$-regular and  $g\in H^{k+\frac{1}{2}}(\partial^\text{int}\mathcal{P})$, $f\in H_\#^k(\mathcal{P})$ fulfilling the compatibility condition $\int\limits_\mathcal{P} f \; dx = -\int\limits_{\partial^\text{int}\mathcal{P}} g \; d\sigma$, then $u\in H^{k+2}_{\#,0}(\mathcal{P})$.
\label{Lemma: H2regular}
\end{Lemma}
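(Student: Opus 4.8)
The plan is to treat the statement in two parts: existence/uniqueness of a weak solution in $H^1_{\#,0}(\mathcal{P})$, and the regularity bootstrap $u\in H^{k+2}_{\#,0}(\mathcal{P})$ under the stated smoothness of $\partial^\text{int}\mathcal{P}$ and the data $f,g$. For the first part I would apply the Lax–Milgram theorem on the Hilbert space $V:=H^1_{\#,0}(\mathcal{P})$. Here the compatibility condition enters decisively: although the test space in \eqref{EQ: DiffWeakForm} is $H^1_\#(\mathcal{P})$, testing against constants shows the right-hand side functional $v\mapsto \int_\mathcal{P} fv - \int_{\partial^\text{int}\mathcal{P}} gv$ annihilates constants precisely when $\int_\mathcal{P} f = -\int_{\partial^\text{int}\mathcal{P}} g$, so the problem is equivalent to the same variational identity posed over $v\in V$. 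Coercivity of the bilinear form $a(u,v)=\int_\mathcal{P}\nabla u\cdot\nabla v$ on $V$ follows from a Poincaré–Wirtinger inequality for periodic Sobolev functions with zero mean over $\mathcal{P}$ (valid since $\mathcal{P}_\text{ext}$ is connected because the inclusions are compactly contained in $Y$); boundedness of $a$ is immediate, and boundedness of the functional uses the trace theorem $H^1(\mathcal{P})\hookrightarrow L^2(\partial^\text{int}\mathcal{P})$ together with $g\in L^2(\partial^\text{int}\mathcal{P})$. This yields a unique $u\in V$.

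For the regularity part I would argue by induction on $k$, the base case $k=0$ being the heart of the matter. The strategy is the standard difference-quotient / localization argument for elliptic equations, adapted to the periodic setting. Away from $\partial^\text{int}\mathcal{P}$, i.e.\ near the periodic exterior boundary $\partial Y$, the periodicity lets one regard $u$ as a solution of $-\Delta u = f$ on a neighborhood in the periodic torus with no boundary, so interior estimates give $H^{k+2}$ control there; concretely one covers $\mathcal{P}_\text{ext}$ by balls and uses that translation by lattice vectors is an isometry respecting the equation. Near $\partial^\text{int}\mathcal{P}$ one flattens the boundary using the $C^{k+2,1}$ charts, transforming \eqref{Def: DiffusionTensor} into a uniformly elliptic Neumann problem with variable coefficients of class $C^{k+1,1}$ and conormal data in $H^{k+1/2}$, and invokes the classical Agmon–Douglis–Nirenberg / Nečas elliptic regularity theory for the Neumann problem on a half-ball (see e.g.\ \cite{bookHornung} or standard references). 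Patching the local estimates with a partition of unity gives $u\in H^{k+2}(\mathcal{P})$, hence $u\in H^{k+2}_{\#,0}(\mathcal{P})$ since the mean-zero and periodicity constraints are preserved. Finally, taking $k=0$ with $f=0$, $g=-e_j\cdot\nu\in H^{1/2}(\partial^\text{int}\mathcal{P})$ (which is smooth when $\partial^\text{int}\mathcal{P}$ is $C^{2,1}$) and noting $\int_{\partial^\text{int}\mathcal{P}} e_j\cdot\nu\,d\sigma=0$ by the divergence theorem confirms that the cell-problem \eqref{Def: DiffusionTensor} is well posed and its solution lies in $H^2_{\#,0}(\mathcal{P})$, so \eqref{Def: DiffusionTensor} holds pointwise a.e.

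The main obstacle I anticipate is bookkeeping the periodic boundary conditions correctly: one must be careful that the difference-quotient argument near $\partial Y$ uses increments in all $d$ coordinate directions and that the periodic extension genuinely removes the exterior boundary from consideration, rather than creating spurious jump terms. A secondary technical point is verifying that the trace space for the Neumann datum on $\partial^\text{int}\mathcal{P}$ is exactly $H^{k+1/2}$ and that the compatibility condition is stable under the boundary-flattening diffeomorphisms; both are routine but need the $C^{k+2,1}$ hypothesis in full strength. Since the paper explicitly says standard methods are used and only records the result ``due to the uncommon periodic boundary conditions,'' I would keep the write-up concise, citing \cite{CioranescuIntro} for the periodic Sobolev framework and a standard elliptic-regularity reference for the localized estimates, and spell out only the periodicity-specific steps in detail.
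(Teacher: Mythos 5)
Your proposal is correct and follows essentially the same route as the paper: Lax--Milgram on $H^1_{\#,0}(\mathcal{P})$ with Poincar\'e-type coercivity for existence and uniqueness, then localized elliptic regularity (interior estimates exploiting periodicity near $\partial Y$, boundary estimates near $\partial^\text{int}\mathcal{P}$) combined with a covering argument. The only minor difference is that the paper first lifts the Neumann datum by a trace-extension $\Psi\in H^{k+2}_{\#,0}(\mathcal{P})$ and reduces to the homogeneous Neumann problem before citing the regularity theorems of Mikhailov, whereas you treat the inhomogeneous conormal data $g\in H^{k+1/2}(\partial^\text{int}\mathcal{P})$ directly in the boundary-flattening step; both are standard and equivalent in substance.
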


\begin{proof}
 The coercivity of the bilinear form on the Hilbert-space $H^1_{\#,0}(\mathcal{P})$ is guaranteed by Poincaré's inequality. The Lax-Milgram theorem therefore implies the unique existence of a~solution $u \in H^1_{\#,0}(\mathcal{P})$ to the weak formulation, cf.~\cite{CioranescuIntro}.

Let us now suppose that $\partial^\text{int}\mathcal{P}$ is $C^{k+2,1}$-regular.
By the surjectivity of the higher-order trace operator continuously extending the following function, cf.~\cite{Necas},
\begin{align*}
    \text{Tr}_k &: H^{k+2}(\mathcal{P}) \to \prod_{l=0}^{k+1} H^{k-l+\frac{3}{2}} (\partial^\text{int}\mathcal{P}), \quad \forall u \in H^{k+2}(\mathcal{P})\cap C^{k+1}(\Bar{\mathcal{P}}), \\
    \text{Tr}_k u &= \left(u|_{\partial^\text{int}\mathcal{P}}, \partial_\nu u|_{\partial^\text{int}\mathcal{P}},\cdots, \partial_\nu^{k+1} u|_{\partial^\text{int}\mathcal{P}}  \right), \nonumber
\end{align*}
we find a~function $\Psi \in H^{k+2}_{\#,0}(\mathcal{P})$ with $\abl{\Psi}{\nu} = \abl{u}{\nu}$ on $\partial^\text{int}\mathcal{P}$ in the trace sense vanishing in a~neighborhood of $\partial Y$. Exploiting linearity of the problem and carrying out the subsequent argument for $u-\Psi$ and the corresponding source term $\tilde{f}=f-\Delta \Psi \in H^k(\mathcal{P})$, we can assume homogeneous Neumann boundary conditions. By Theorem~3 of~\cite{mikhailov}, the required interior higher regularity is established. Following the lines of Theorem~4 of~\cite{mikhailov}, higher regularity is also obtained in a~neighborhood of every interior boundary point $y\in\partial^\text{int}\mathcal{P}$. Using an~open covering argument, we obtain $u \in H^{k+2}_{\#,0}(\mathcal{P})$.
\end{proof}

\begin{Bem}
\label{REM:DiffTensorIndeedSolve}
As $\partial^\text{int}\mathcal{P}$ is of class $C^{2,1}$, $\nu \cdot e_1 \in H^{\frac{1}{2}}(\partial^\text{int}\mathcal{P})$ holds. Furthermore, we have $\int_{\partial^\text{int}\mathcal{P}} \nu \cdot e_1  \; d\sigma = 0$ by Gauss's theorem. As such, the last lemma covers the unique solvability of problem (\ref{Def: DiffusionTensor}) in $H^{2}_{\#,0}(\mathcal{P})$ and the equation holds in a~point-wise almost-everywhere sense. 
\end{Bem}

\begin{Bem}
Note that the higher regularity result of Lemma~\ref{Lemma: H2regular} holds for the diffusion cell-problems of the homogenized model. However, regarding the upscaling process from the associated pore-scale diffusion equation to the effective one considered here, the convergence of the sequence of transport problems $c_\varepsilon$ defined on the $\varepsilon$-periodic domains $\Omega_\varepsilon$ against the homogenized solution $c$ is not valid with respect to these stronger norms. This is essentially due to the absence of uniform boundedness of $c_\varepsilon$ with respect to $\varepsilon$. 
\end{Bem}

Remark~\ref{REM:DiffTensorIndeedSolve} enables us to define mappings from the solution space of equation~(\ref{EQ: DiffWeakForm}) to the bulk and boundary data spaces with a~point-wise interpretation.  More precisely, we consider such mappings which involve the geometry alteration as a~parameter. This idea poses the main ingredient in the subsequent investigation of the dependence of solutions to the weak problem (\ref{EQ: DiffWeakForm}) and functionals thereof on the geometry $\mathcal{P}$. Following the approach presented by \cite{Henry}, a~Lagrangean description of the initial problem on varying domains is taken. The main step is to rewrite the equation on a~fixed domain of reference $\mathcal{P}$. This technique has also been successfully applied to the homogenization of PDEs on non-uniformly periodic or evolving domains in the context of porous media \cite{HornungProceed,PETER07,Gahn2021} or shape optimization minimizing energy functionals depending on a~PDE's solution~\cite{Simon}. In order to re-define functions mapping from the altered domains as functions on a~fixed domain of reference, we make use of the concept of diffeomorphisms and pullbacks:  

\begin{mydef} \textit{Diffeomorphism}\\
Let $h:Y \to h(Y)\subset\mathbb{R}^d$ be a~bijective mapping with $h\in C^{k,\alpha}(Y, \mathbb{R}^d)$ for $k\geq 1,$ $\alpha\in [0,1]$. We call $h$ a~diffeomorphism of class $\text{Diff}^{k,\alpha}(Y, \mathbb{R}^d)$ iff the inverse $h^{-1}$ satisfies \mbox{$h^{-1}\in C^{k,\alpha}(h(Y),Y)$}.
\label{DEF: diffeo}
\end{mydef}
The action of a~diffeomorphism on the set $\mathcal{P}\subset Y$ is illustrated in Figure~\ref{fig:Deformation}.
\begin{mydef} \textit{Pullback}\\
Let $h$ be a~diffeomorphism $h\in \text{Diff}^{1}(Y)\subset C^1(Y, \mathbb{R}^d)$ and $l:h(Y) \to \mathbb{R}^n$, $n\in\mathbb{N}$. We define the pullback $h^*$ by 
\begin{align*}
h^*(l):Y \to \mathbb{R}^n, \quad h^* l(x) := l(h(x)).
\end{align*}
\label{DEF: pullback}
\end{mydef} 

\begin{figure}[!h]
    \centering
    \includegraphics[width=0.6\textwidth]{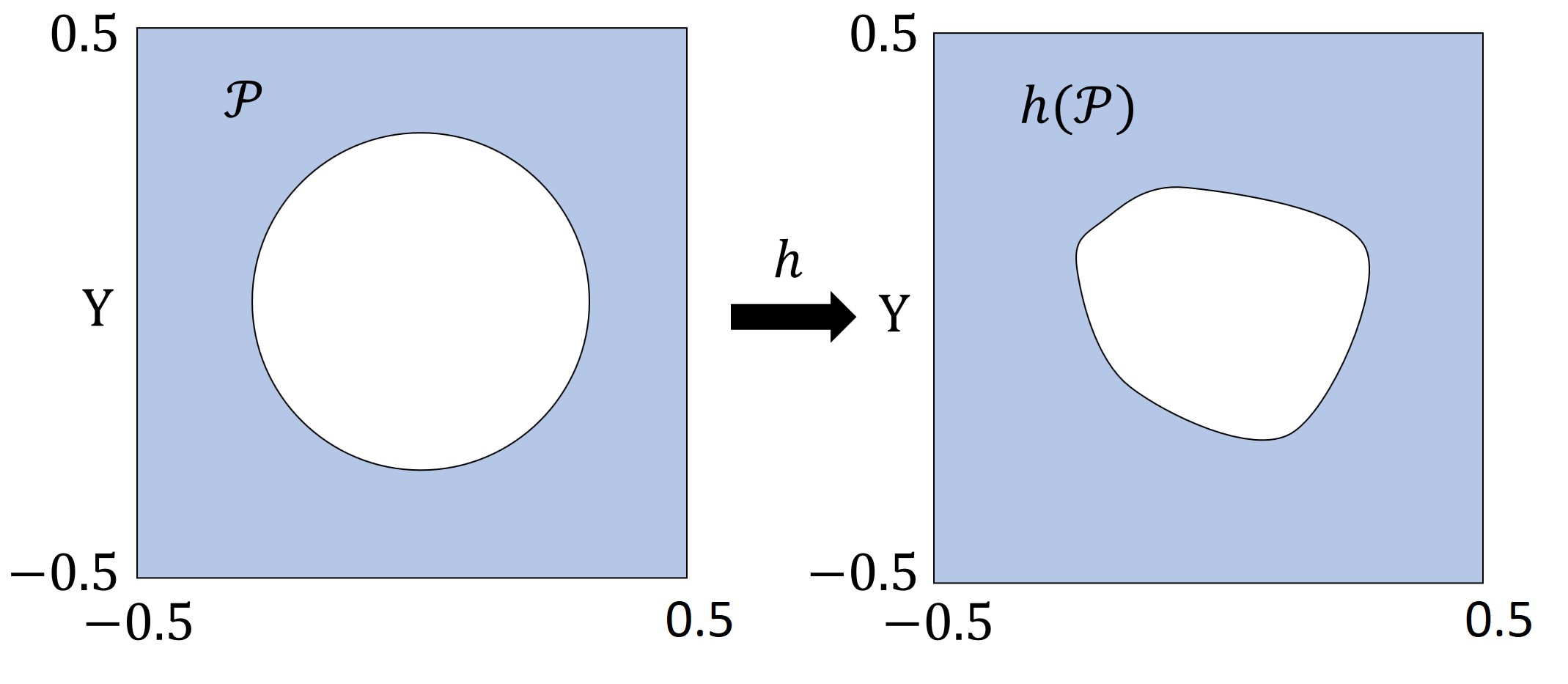}
    \caption{Smooth deformation of domain $\mathcal{P}$ with circular inclusion mediated by a~diffeomorphism $h\in \text{Diff}_\Box^{1} (\bar{Y})$. As $h$ preserves the exterior boundary $\partial Y$, the image-set $h(\mathcal{P})$ is an~admissible unit-cell pore-space geometry.}
    \label{fig:Deformation}
\end{figure}
 Note that the pullback is a~linear operation and its inverse is given as $(h^*)^{-1}(l) = (h^{-1})^*(l)$. Furthermore, for sufficiently smooth diffeomorphisms, the pullback is a~bounded operator between the $H^m$ function spaces on the original and deformed set, cf.~\cite{mcleanElliptic,Henry}. In Example~\ref{Ex:DiffeoCircle}, we present an~explicit construction of a~family of diffeomorphisms mapping circular inclusion of different radii to one another and provide illustrations of the associated pullback of a~distance function in Figure~\ref{fig:DiffeoCircles}.   

Using the tools of Definition~\ref{DEF: diffeo} and~\ref{DEF: pullback}, we are now able to characterize solutions to~(\ref{EQ: DiffWeakForm}) on the domain $h(\mathcal{P})$ as roots of a~function~$F$. Taking a~Lagrangean point of view, we work on a fixed domain of reference $\mathcal{P}$. As such, functions first need to be conveyed to the deformed domain $h(\mathcal{P})$ where the differential operators according to~(\ref{Def: DiffusionTensor}) are applied. Performing a~pullback with the inverse deformation, functions are translated back onto the domain of reference.
Accordingly, let us consider the following mapping:
\begin{align}
    \label{EQ:F}
    F & :  H^2_{\#,0}(\mathcal{P}) \times \text{Diff}^{2,1}(\bar{Y}) \to L_0^2(\mathcal{P}) \times H^\frac{1}{2}(\partial^\text{int}\mathcal{P}),\\
    F & (u,h) = \left(h^*\Delta h^{*-1} u - \fint\limits_\mathcal{P} h^*\Delta h^{*-1} u \; dy,\; h^* \text{Tr}_{h(\mathcal{P})}\left(\nu_{h(\mathcal{P})}  \cdot (\nabla h^{*-1} u - e_1)  \right) \right) = (F_1,F_2), \nonumber
\end{align}
where $\nu_{h(\mathcal{P})}$ denotes the outer unit normal with respect to the domain $h(\mathcal{P})$ and $\text{Tr}_{h(\mathcal{P})}$ the standard trace operator on $h(\mathcal{P})$. Note that the normal vectors can be extended within a~tubular neighborhood of $\partial^\text{int}\mathcal{P}$, cf.~Theorem~\ref{TheoremTubNeigh}. By the trace theorem and change of variable rule, $F$ is well-defined as a~mapping between the stated spaces. Note that in the first component~$F_1$ a~vanishing mean value is enforced. Thereby, we eliminate an~additional degree of freedom to ensure surjectivity of~$F$.

Since the image of the unit-cell under an~arbitrary diffeomorphism may not be a~unit-cell, we must also introduce a~restricted class of deformations. Therefore, let $\text{Diff}_\Box(\bar{Y})$ denote the set of diffeomorphism preserving the exterior boundary $\partial Y$ defined by:
\begin{align}
    \label{EQ:DiffBoundaryPreserving}
    \text{Diff}_\Box^{k,\alpha} (\bar{Y}) = \left\lbrace h \in \text{Diff}^{k,\alpha} (\bar{Y},\bar{Y}):\; h_{|U} = \text{id}_U  \right\rbrace,
\end{align}
for some fixed neighborhood $\partial Y \subset U\subset \bar{Y}$. In fact, the diffeomorphism illustrated in Figure~\ref{fig:Deformation} belongs to this specified class. The construction~(\ref{EQ:DiffBoundaryPreserving}) is inspired by the Hanzawa transformation, cf.~\cite{PrussSimonett}, and requires the diffeomorphism to decay smoothly towards the identity at the exterior boundary. As such, periodic functions with respect to $Y$ admit a~periodic pullback for $h\in  \text{Diff}^{k,\alpha}_\Box (\bar{Y})$. 
The importance of the mapping~$F$ defined in~(\ref{EQ:F}) is now given by the following characterization property:
\begin{align*}
     F(u,h) = (0,0) \iff \zeta_1 = h^{*-1} u - \fint\limits_{h(\mathcal{P})}h^{*-1} u\; dy \text{ solves (\ref{Def: DiffusionTensor}) weakly in } h(\mathcal{P}),
\end{align*}
for all $h\in \text{Diff}^{2,1}_\Box (\bar{Y})$. 

We will now apply the implicit function theorem to $F$ to obtain a~continuous mapping $h \mapsto u$ in the respective spaces as summarized in Theorem~\ref{THEOREM1} below. As a~result, a~relation between the deformation of $\mathcal{P}$ and the associated solution to (\ref{Def: DiffusionTensor}) is established. Therefore, we check the following properties: \\

\underline{Continuity of F:}
Let us denote the pullback of $u$ via $h^{-1}$ by $v=h^{*-1}u$ constituting a function on $h(\bar{Y})$ and the image point on $h(\bar{Y})$ by $y=h(x)$. First, we consider the first term of the first component $F_1$ of the mapping $F$. By applying the chain rule, we have:
\begin{align}
\label{EQ: ContinuityF1}
& h^*\Delta h^{*-1} u (x) = \Delta v(y) = \Delta u(h^{-1}(y)) = \\
&\sum_{i,j=1}^d \frac{\partial^2 u}{\partial x_i \partial x_j} \left(\sum_{k=1}^d \abl{h^{-1}_i}{y_k} \abl{h^{-1}_j}{y_k} \right)  +\sum_{i=1}^d \abl{u}{x_i} \left(\sum_{k=1}^d  \frac{\partial^2 h^{-1}_i}{\partial y_k \partial y_k} \right). \nonumber
\end{align}
Note that by the inverse function theorem and the representation of a~matrix' inverse via the cofactor matrix, we can rewrite all partial derivatives of $h_i^{-1}(y)$ as a~function of derivatives of $h_i(x)$ only involving multiplications and division by $\text{det}(\nabla h(x))$. By the uniform boundedness of the last expression away from zero, this map is in particular locally Lipschitz continuous.
For any sequence $(u_i,h_i)_{i \in \mathbb{N}}$ in $ H^2_{\#,0}(\mathcal{P}) \times \text{Diff}^{2,1}(\bar{Y}) $ converging to $(u,h)$ in the product topology, we have the convergence of the derivatives of $u_i$ in $L^2$ and the convergence of the derivatives of $h_i$ uniformly. Hence, expression (\ref{EQ: ContinuityF1}) is continuous in $(u,h)\in  H^2_{\#,0}(\mathcal{P}) \times \text{Diff}^{2,1}(\bar{Y})$. As the integral is a~linear and bounded operator \mbox{$\int : L^2(\mathcal{P})\to \mathbb{R}$}, we obtain continuity for $F_1$. 

We can apply the same strategy to prove continuity of the second component $F_2$ of $F$. Noting the representation
\begin{align*}
    \nu_{h(\mathcal{P})} (y) = (\nabla h)^{-T}(x) \; \nu_{\mathcal{P}}(x)  ||(\nabla h)^{-T}(x) \; \nu_{\mathcal{P}}(x)||^{-1}_2
\end{align*}
derived in \cite{Henry} using the inverse transposed Jacobian matrix $(\nabla h)^{-T}$, we compute
\begin{align*}
    F_{2}(u,h) (x) &=  h^{*} \text{Tr}_{h(\mathcal{P})}\left((\nu_{h(\mathcal{P})}  \cdot \nabla h^{*-1}u) (y) -  \nu_{h(\mathcal{P})}(y) \cdot e_1\right) \nonumber\\
    &= \text{Tr}_{\mathcal{P}}  \left(\left( (\nabla h)^{-T}  \nu_{\mathcal{P}} \right) \cdot \left[\sum_{i=1}^d\abl{u}{x_i} \abl{h^{-1}_i}{y_j}-\delta_{j,1}\right]_j \cdot ||(\nabla h)^{-T}(x) \; \nu_{\mathcal{P}}(x)||_2^{-1}\right) 
\end{align*} 
for $x\in \partial^\text{int}\mathcal{P}$.
As $\text{Diff}^{2,1}(\bar{Y}) \subset C^{2,1}(\bar{Y},\mathbb{R}^d)$ is open, we conclude the continuity of $F$ on a~neighborhood $V$ of $(u,\text{id}_{\bar{Y}})$.\\

\underline{Continuity of $F^{\prime}_u$:}
By Definition~\ref{DEF: pullback} the pullback operator is linear. Using the linearity of the differential and trace operators involved, we conclude
\begin{align*}
     F^{\prime}_u (u,h) (w) = \left(h^*\Delta h^{*-1} w -\fint_\mathcal{P} h^*\Delta h^{*-1} w \; dy, \; h^* \text{Tr}_{h(\mathcal{P})}(\nu_{h(\mathcal{P})}  \cdot \nabla h^{*-1} w) \right)
\end{align*}
for all $w\in H^2_{\#,0}(\mathcal{P})$. Following the arguments from above, we obtain continuous Fréchet differentiability with respect to the first argument on a~neighborhood $V$ of $(u,\text{id}_{\bar{Y}})$. \\

\underline{Bijectivity of $F^{\prime}_u$:}
The bijectivity of $F^{\prime}_u(u,\text{id}_{\bar{Y}})$ onto $L_0^2(\mathcal{P}) \times H^\frac{1}{2}(\partial^\text{int}\mathcal{P})$ is equivalent to finding a~unique solution to the elliptic problem~(\ref{Def: DiffusionTensor}) on $\mathcal{P}$. More precisely, for a~given point $(f,g)$ in the image space of $F$, we search for a~weak solution $u$ for the Neumann boundary condition $g$ and source term $f+c$ for a~constant $c\in\mathbb{R}$. By Lemma \ref{Lemma: H2regular}, there exists exactly one $c$ such that the problem admits a~solution (compatibility condition) in $H^2_{\#,0}(\mathcal{P})$. In that case, the solution is unique. Note that due to the  set of invertible bounded linear operators between Banach spaces being open and the continuity of $F^{\prime}_u$, the bijectivity property of $F^{\prime}_u$ in fact holds on a~neighborhood of $(u,\text{id}_{\bar{Y}})$.

Summarizing the above arguments, we conclude the following statement.

\begin{Theorem} \textit{Continuous dependence of $\ID$}\\
Assume a~$C^{2,1}$ open set $Y\setminus\bar{\mathcal{P}} \subset Y$ being compactly contained in $Y$ and $u\in H^2_{\#,0}(\mathcal{P})$ such that \mbox{$F(u,\text{id}_{\bar{Y}})= (0,0)$}, i.e.\ $u$ is a~solution to problem (\ref{EQ: DiffWeakForm}). Then there exists a~neighborhood \mbox{$V\subset C^{2,1}(\bar{Y})$} of $\text{id}_{\bar{Y}}$ and a~continuous function $g:V\to H^2_{\#,0}(\mathcal{P})$, $g(\text{id}_{\bar{Y}})=u$, such that
\begin{align*}
 F(g(h),h)= (0,0), \quad \forall h\in V.    
\end{align*}
Particularly, $h^{*-1}g(h)$ solves (\ref{EQ: DiffWeakForm}) up to an~additive constant on $h(\mathcal{P})$ for all \mbox{$h\in \text{Diff}^{2,1}_\Box (\bar{Y})\cap V$.}
\label{THEOREM1}
\end{Theorem}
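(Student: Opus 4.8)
The plan is to obtain Theorem~\ref{THEOREM1} as a direct application of the implicit function theorem to the map $F$ from~(\ref{EQ:F}), feeding in the three properties verified above. Concretely, I would invoke the version of the implicit function theorem that requires only joint continuity of $F$ together with continuous Fréchet differentiability of $F$ with respect to its first argument and invertibility of the corresponding partial derivative at the base point (the Hildebrandt--Graves form, as employed in~\cite{Henry}); this is exactly the combination of hypotheses assembled in the preceding paragraphs on continuity of $F$, continuity of $F^{\prime}_u$, and bijectivity of $F^{\prime}_u$ at $(u,\text{id}_{\bar{Y}})$. Since $\text{Diff}^{2,1}(\bar{Y})$ is an open subset of the Banach space $C^{2,1}(\bar{Y},\mathbb{R}^d)$ and $F(u,\text{id}_{\bar{Y}}) = (0,0)$ by assumption, the theorem produces an open neighborhood $V \subset C^{2,1}(\bar{Y})$ of $\text{id}_{\bar{Y}}$ --- which we may shrink so that $V \subset \text{Diff}^{2,1}(\bar{Y})$, using openness of the latter --- together with a continuous map $g : V \to H^2_{\#,0}(\mathcal{P})$ satisfying $g(\text{id}_{\bar{Y}}) = u$ and $F(g(h),h) = (0,0)$ for every $h \in V$.

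Second, I would transfer this statement back to the deformed cell. For $h \in \text{Diff}^{2,1}_\Box(\bar{Y}) \cap V$ the pullback $h^{*}$ maps $Y$-periodic functions to $Y$-periodic functions, because $h$ coincides with the identity near $\partial Y$; hence $h^{*-1}$ does as well, and the characterization identity stated immediately before the theorem applies verbatim: $F(g(h),h) = (0,0)$ is equivalent to $h^{*-1}g(h) - \fint_{h(\mathcal{P})} h^{*-1}g(h)\, dy$ being a weak solution of~(\ref{Def: DiffusionTensor}) in $h(\mathcal{P})$. Consequently $h^{*-1}g(h)$ solves~(\ref{EQ: DiffWeakForm}) with the data of the diffusion cell-problem up to an additive constant on $h(\mathcal{P})$, which is precisely the last assertion of the theorem. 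Nothing further is needed for the conclusion as stated.

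As for what is genuinely delicate: essentially all of the analytic content has already been discharged in the verification of the hypotheses of the implicit function theorem (the elliptic regularity of Lemma~\ref{Lemma: H2regular} entering through bijectivity of $F^{\prime}_u$, and the change-of-variables/cofactor bounds entering through continuity), so the proof of the theorem itself is short. The one point that still requires a word of care is the distinction between the abstract neighborhood $V$ furnished in $C^{2,1}(\bar{Y},\mathbb{R}^d)$ and the admissible subclass $\text{Diff}^{2,1}_\Box(\bar{Y})$ of exterior-boundary-preserving deformations --- the class for which $h(\mathcal{P})$ is again a legitimate unit-cell geometry and for which the pullback is periodicity-preserving; one must ensure $V$ is taken inside $\text{Diff}^{2,1}(\bar{Y})$ and then intersect with $\text{Diff}^{2,1}_\Box(\bar{Y})$, which is harmless since $\text{id}_{\bar{Y}}$ lies in the latter. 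A second, bookkeeping-type caveat is that the theorem delivers only continuity of $g$; upgrading to smooth (or, in suitable settings, analytic) dependence --- as will be exploited in the later existence arguments --- would require correspondingly higher joint regularity of $F$ in both arguments, but that is outside the scope of the present statement.
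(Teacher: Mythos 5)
Your proposal is correct and follows exactly the paper's route: the theorem is obtained as an immediate application of the implicit function theorem for Banach spaces (citing the preceding verifications of continuity of $F$, continuity of $F^{\prime}_u$, and bijectivity of $F^{\prime}_u$ at $(u,\text{id}_{\bar{Y}})$), with the final assertion recovered from the characterization identity preceding the theorem. Your additional remarks on shrinking $V$ into $\text{Diff}^{2,1}(\bar{Y})$ and intersecting with $\text{Diff}^{2,1}_\Box(\bar{Y})$ are harmless elaborations of what the paper leaves implicit.
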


\begin{proof}
This is an~immediate consequence of the implicit function theorem for Banach spaces as given in \cite{Valent} and the arguments above.
\end{proof}

By the previous theorem, we established a~continuous relation between the diffeomorphism~$h$ describing the alteration of the domain $\mathcal{P}$ and the pullback of the solution $g(h)$ to the associated problem~(\ref{Def: DiffusionTensor}). In a~final step, we show that the continuous dependence carries over to the desired quantity $\mathbb{D}$ defined in~(\ref{Def: DiffusionTensor}):
\begin{Korollar}
\label{Kor: DiffConti}
Under the assumptions of Theorem \ref{THEOREM1} the mapping 
\begin{align*}
    R: V \to \mathbb{R} ,\quad  h\mapsto \int_{h(\mathcal{P})}  \nabla h^{*-1}g(h) \; dy
\end{align*} 
is continuous. Therefore, the diffusion tensor depends continuously on $\text{Diff}^{2,1}_\Box (\bar{Y})$-variations of the domain $\mathcal{P}$.
\end{Korollar}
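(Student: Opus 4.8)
The plan is to transport the integral defining $R$ onto the fixed reference cell $\mathcal{P}$ and then combine uniform convergence (in $h$) of the resulting geometric weight with the $H^2$-convergence of $g(h)$ supplied by Theorem~\ref{THEOREM1}. Throughout, shrinking the neighbourhood $V$ of $\text{id}_{\bar{Y}}$ from Theorem~\ref{THEOREM1} if necessary, we may assume every $h\in V$ is a diffeomorphism onto its image with $\det\nabla h>0$ on $\bar{Y}$; this is possible since $h\mapsto\det\nabla h$ is continuous from $C^{2,1}(\bar{Y})$ into $C(\bar{Y})$ and $\det(\nabla\,\text{id}_{\bar{Y}})\equiv1$.

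Fix $h\in V$ and set $u:=g(h)$. The change of variables $y=h(x)$, $dy=\det\nabla h(x)\,dx$, together with the chain-rule identity $h^*\nabla_y h^{*-1}u=(\nabla h)^{-T}\nabla_x u$ --- exactly the computation already carried out for $F_2$ preceding Theorem~\ref{THEOREM1} --- and Cramer's rule $\det(\nabla h)\,(\nabla h)^{-T}=\mathrm{cof}(\nabla h)$ gives
\[
R(h)=\int_{h(\mathcal{P})}\nabla_y h^{*-1}u\;dy=\int_{\mathcal{P}}\mathrm{cof}\bigl(\nabla h(x)\bigr)\,\nabla_x g(h)(x)\;dx,
\]
where $\mathrm{cof}(\nabla h)$, being a polynomial in the first derivatives of $h$, is continuous and locally bounded as a function of $h\in C^{2,1}(\bar{Y})$. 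Now, given $h_i\to h$ in $V$, one has $\nabla h_i\to\nabla h$ uniformly, hence $\mathrm{cof}(\nabla h_i)\to\mathrm{cof}(\nabla h)$ uniformly on $\bar{\mathcal{P}}$, while Theorem~\ref{THEOREM1} gives $g(h_i)\to g(h)$ in $H^2_{\#,0}(\mathcal{P})$, so $\nabla_x g(h_i)\to\nabla_x g(h)$ in $L^2(\mathcal{P})\hookrightarrow L^1(\mathcal{P})$ with uniformly bounded $L^1$-norms. Writing $R(h_i)-R(h)=\int_{\mathcal{P}}(\mathrm{cof}(\nabla h_i)-\mathrm{cof}(\nabla h))\nabla_x g(h_i)\,dx+\int_{\mathcal{P}}\mathrm{cof}(\nabla h)(\nabla_x g(h_i)-\nabla_x g(h))\,dx$ and estimating the first integral by $\|\mathrm{cof}(\nabla h_i)-\mathrm{cof}(\nabla h)\|_{L^\infty}\|\nabla_x g(h_i)\|_{L^1}$ and the second by $\|\mathrm{cof}(\nabla h)\|_{L^\infty}\|\nabla_x g(h_i)-\nabla_x g(h)\|_{L^1}$, both tend to $0$, so $R$ is continuous. (Equivalently, one could apply Gauss's theorem to rewrite $R(h)$ as a boundary integral over $\partial^\text{int}h(\mathcal{P})$, the contributions on $\partial Y$ cancelling by $Y$-periodicity since $h\in\text{Diff}^{2,1}_\Box(\bar{Y})$, then pull it back onto $\partial^\text{int}\mathcal{P}$ via the surface transformation formula and conclude with the continuity of the trace $H^2_{\#,0}(\mathcal{P})\to L^2(\partial^\text{int}\mathcal{P})$; the limit passage is the same.)

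To pass from $R$ to $\ID$, note that the solution $\zeta_j$ of~(\ref{Def: DiffusionTensor}) on $h(\mathcal{P})$ equals $h^{*-1}g_j(h)$ up to an additive constant, where $g_j$ is the continuous map furnished by Theorem~\ref{THEOREM1} for the variant of $F$ with $e_1$ replaced by $e_j$; since the constant drops under $\nabla_y$, the quantity $\int_{h(\mathcal{P})}\partial_{y_i}\zeta_j\,dy$ is the $i$-th component of $R_j(h)$ and hence continuous in $h$ by the above. As $\delta_{ij}\int_{h(\mathcal{P})}1\,dy=\delta_{ij}\int_{\mathcal{P}}\det\nabla h\,dx$ is plainly continuous in $h$ as well, each entry $\ID_{ij}(h)=[R_j(h)]_i+\delta_{ij}\int_{\mathcal{P}}\det\nabla h\,dx$ depends continuously on $h\in V$, which is the claim.

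The only genuinely technical point --- the \emph{main obstacle} --- is the first step: performing the change of variables and identifying the geometric weight as $\mathrm{cof}(\nabla h)$ (equivalently, a continuous, locally bounded function of $\nabla h$), which relies on $\det\nabla h$ being bounded away from $0$ near $\text{id}_{\bar{Y}}$, exactly as in the continuity-of-$F$ argument preceding Theorem~\ref{THEOREM1}. Once that weight is under control, the limit passage is the routine "uniform $\times\;L^1$" estimate above.
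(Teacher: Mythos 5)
Your proof is correct and follows essentially the same route as the paper: a change of variables back to the reference cell $\mathcal{P}$, identification of the geometric weight (the paper writes it as $\nabla h(x)^{-1}\,\lvert\det\nabla h(x)\rvert$, you as $\mathrm{cof}(\nabla h)$ --- the same object), and then continuity from the uniform convergence of that weight together with the $H^2$-continuity of $g$ from Theorem~\ref{THEOREM1}. You merely spell out the ``uniform $\times\,L^1$'' limit passage and the trivial continuity of the $\delta_{ij}\int_{h(\mathcal{P})}1\,dy$ contribution, which the paper leaves implicit.
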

\begin{proof}
By the change of variables theorem and chain rule we have
\begin{align}
\label{EQ:KorDiffConti}
    \int_{h(\mathcal{P})} \nabla h^{*-1}g(h) \; dy &=
    \int_\mathcal{P} \nabla g_h(x) \cdot \nabla h^{-1}(h(x)) \cdot \mid \text{det} (\nabla h(x))\mid \; dx \\
    &=\int_\mathcal{P} \nabla g_h(x) \cdot \nabla h(x)^{-1} \cdot \mid \text{det} (\nabla h(x))\mid \; dx. \nonumber
\end{align}
As $g$ is continuous with respect to the $H^2$-norm in the image space, continuity of the functional is proven. 
\end{proof}

By the previous corollary, we established the continuous behavior of $\ID$ on $h\in\text{Diff}_\Box^{2,1}(\bar{Y})$ in the topology of $C^{2,1}(\bar{Y})$. However, this degree of regularity is insufficient for our later purposes, cf.~Theorem~\ref{TheoremSchulz}. In order to obtain stronger results, we specify the setting more tailored to our later application.  
Let us now consider a~smooth mapping \mbox{$h:(-S,S) \to \text{Diff}_\Box^{2,1}(\bar{Y})$} for some artificial time horizon $S>0$ and $h(0)=\text{id}_{\bar{Y}}$. This relates to a~1-parametric deformation of the initial geometry. 

\begin{Example}
\label{Ex:DiffeoCircle}
Consider inclusions $Y\setminus \bar{\mathcal{P}}$ of circular shape and of different radii. In this case, a~smooth diffeomorphism on $Y$ can be easily constructed by radially compressing/ex\-panding annuli within a~compact subset of $Y$. Given two radii $0<r_1\leq r_2<\frac{1}{2}$, a~deformation mapping a~circle of radius $r_2$ to a~circle of radius $r_1$ is defined by
\begin{align}
\label{EQ:DiffeoCircle}
    h_{r_1}(y)=\begin{cases}
    y, & |y|>\frac{1}{2}, \\
    (1-\xi( |y|))y + \xi (|y|)(\frac{r_1}{r_2} y), & r_2 \leq |y|\leq \frac{1}{2}, \\
    \frac{r_1}{r_2} y, & |y|\leq r_2
  \end{cases}
\end{align}
choosing a~suitable $\xi \in C^\infty \left([r_2,\frac{1}{2}]\right)$ with $\xi^\prime \in C_0^\infty \left((r_2,\frac{1}{2})\right)$, $\xi(r_2)=1$, $\xi(\frac{1}{2}) = 0$, see \cite{eden2021multiscale}. As such, the domain remains unchanged for $|y|>\frac{1}{2}$ and is uniformly contracted for $|y|\leq r_2$ with a~smooth convex-combination layer in between. For $r_2$ fixed, we can consider the path
\begin{align*}
    h\in C^0\left(0,r_2;\text{Diff}^{2,1}_\Box (\bar{Y})\right), \quad h:\; s\mapsto h_s.
\end{align*}
Then $R\circ h $ with $R$ being defined in Corollary \ref{Kor: DiffConti} is also a~continuous mapping. Consequently, the diffusion tensor depends continuously on $s$. 

\begin{figure}[!h]
    \centering
    \includegraphics[width=0.7\textwidth]{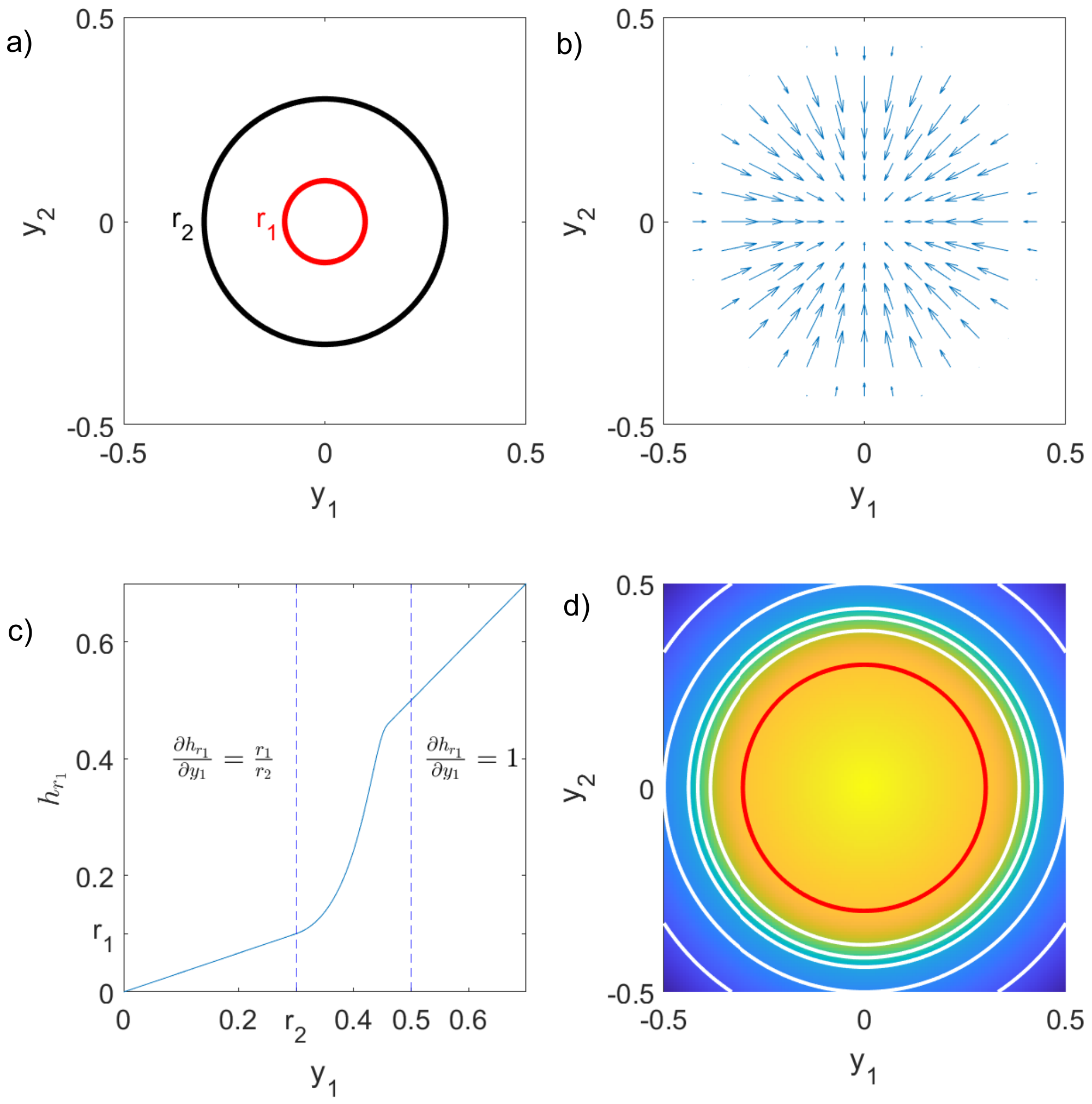}
    \caption{Visualization of diffeomorphism~(\ref{EQ:DiffeoCircle}) for $r_2=0.3,\; r_1=0.1$: Image a) illustrates the related circles ($r_2$-black, $r_1$-red) posing the interior boundary of the domain. In image b), the displacement field is shown, i.e.\ $h_{r_1}-\text{id}_Y$. As enforced by the interpolation function $\xi$ in~(\ref{EQ:DiffeoCircle}), the displacement smoothly vanishes close to the exterior boundary $\partial Y$. In c), the graph of $h_{r_1}$, cf. (\ref{EQ:DiffeoCircle}), is shown along the $y_1$ axis illustrating the three different sections (uniform contraction - transition - identity).  
    Figure d) displays the pullback $h_{r_1}^*(\Phi)$ with $\Phi(y_1,y_2)=r_1-||(y_1,y_2)||_2$. Contour lines uniformly spaced in increments of $0.1$ are added in white. The zero level-set of $h_{r_1}^*(\Phi)$ highlighted in red corresponds to a~circle of radius $r_2$.  }
    \label{fig:DiffeoCircles}
\end{figure}
\end{Example}

In the following, we prove differentiability of the diffusion tensors along such 1-parametric curves. More precisely, $C^m$-mappings $h:(-S,S) \to \text{Diff}_\Box^{2,1}(\bar{Y})$ will be considered.
Accordingly, we switch the above setting to the following:
\begin{align*}
    F&:  H^2_{\#,0} (\mathcal{P}) \times (-S,S) \to L_0^2(\mathcal{P}) \times H^\frac{1}{2}(\partial^\text{int}\mathcal{P}),\\
    F & (u,s) = \left(h_s^*\Delta h_s^{*-1} u - \fint\limits_\mathcal{P} h_s^*\Delta h_s^{*-1} u \; dy,\; h_s^* \text{Tr}_{h_s(\mathcal{P})}\left(\nu_{h_s(\mathcal{P})}  \cdot (\nabla h_s^{*-1} u - e_1)  \right) \right), \nonumber
\end{align*}
tracking along a~fixed path of diffeomorphisms in comparison to~(\ref{EQ:F}).
In order to obtain higher differentiability of the resolution function $g$ in Theorem~\ref{THEOREM1}, higher regularity of $F$ needs to be established. Revisiting the calculations performed in (\ref{EQ: ContinuityF1}) we immediately see a~transfer of regularity to $F$ with respect to the second variable. As the mapping is linear with respect to the first variable, we again obtain $C^m$-regularity for $F$. More precisely, the following theorem holds extending the smooth dependence results for simple parametric families of shapes as derived in~\cite{Schulz2017, eden2021multiscale}.

\begin{Theorem} \textit{Smooth dependence of $\ID$}\\
Consider a~$C^m$-curve $s\mapsto h_s$ of $\text{Diff}_{\Box}^{2,1}(\bar{Y})$-embeddings
and $h_0=\text{id}_{\bar{Y}}$ for $m\geq 1$. Assume a~$C^{2,1}$ open set $Y\setminus\bar{\mathcal{P}} \subset Y$ being compactly contained in $Y$ and $u\in H^2_{\#,0}(\mathcal{P})$ such that \mbox{$F(u,0)= (0,0)$}, i.e.\ $u$ is a~solution to problem (\ref{EQ: DiffWeakForm}). Then there exists a~neighborhood $V$ of zero and a~$C^m$-function $g:V\to H^2_{\#,0}(\mathcal{P})$, $g(0)=u$, such that
\begin{align*}
 F(g(s),s)= (0,0), \quad \forall s\in V.    
\end{align*}
Particularly, $h_s^{*-1}g(s)$ solves (\ref{EQ: DiffWeakForm}) up to an~additive constant on $h(\mathcal{P})$ for all $s\in V$.
\label{THEOREM1b}
\end{Theorem}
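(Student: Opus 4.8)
The plan is to deduce Theorem~\ref{THEOREM1b} from the $C^m$ version of the implicit function theorem for Banach spaces, cf.~\cite{Valent}, applied to the parametrized map $F: H^2_{\#,0}(\mathcal{P}) \times (-S,S) \to L_0^2(\mathcal{P}) \times H^\frac{1}{2}(\partial^\text{int}\mathcal{P})$ introduced just before the statement. This mirrors the proof of Theorem~\ref{THEOREM1}, upgrading ``continuous'' to ``$C^m$'' in the parameter. Three facts are required: $F$ is $C^m$ on a neighborhood of $(u,0)$; $F(u,0)=(0,0)$, which is the hypothesis; and the partial Fréchet derivative $F^{\prime}_u(u,0): H^2_{\#,0}(\mathcal{P}) \to L_0^2(\mathcal{P}) \times H^\frac{1}{2}(\partial^\text{int}\mathcal{P})$ is a bounded linear isomorphism. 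The last point is verbatim the corresponding step of Theorem~\ref{THEOREM1}: since $h_0 = \text{id}_{\bar{Y}}$, the operator $F^{\prime}_u(u,0)$ sends $w$ to $(\Delta w - \fint_{\mathcal{P}}\Delta w\,dy,\ \text{Tr}_{\mathcal{P}}(\nu_{\mathcal{P}}\cdot\nabla w))$, whose bijectivity onto the target is exactly the unique $H^2_{\#,0}(\mathcal{P})$-solvability of the Neumann cell-problem for arbitrary right-hand side (with the source corrected by the unique additive constant fixed by the compatibility condition), i.e.\ Lemma~\ref{Lemma: H2regular} together with Remark~\ref{REM:DiffTensorIndeedSolve}. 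Granting these, the implicit function theorem produces an open interval $V\ni 0$ and a $C^m$ map $g: V\to H^2_{\#,0}(\mathcal{P})$ with $g(0)=u$ and $F(g(s),s)=(0,0)$ on $V$; the characterization property of $F$ (the parametrized analogue of the one recorded above) then identifies $h_s^{*-1}g(s)$, minus its mean over $h_s(\mathcal{P})$, as a weak solution of~(\ref{Def: DiffusionTensor}) on $h_s(\mathcal{P})$ for each $s\in V$, because $h_s\in\text{Diff}^{2,1}_\Box(\bar{Y})$.

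The actual work lies in establishing $C^m$-regularity of $F$, and the decisive structural observation — anticipated in the paragraph preceding the theorem — is that $F$ is affine-linear in $u$, so joint smoothness reduces to $C^m$-dependence on $s$ of finitely many coefficient functions paired with $u$, $\nabla u$, or $\nabla^2 u$ through fixed bounded (bi)linear operations. Concretely, the chain-rule expansion~(\ref{EQ: ContinuityF1}) writes $F_1(u,s)$ as a finite sum of terms $a^s_{ij}(x)\,\frac{\partial^2 u}{\partial x_i \partial x_j}$ and $b^s_i(x)\,\abl{u}{x_i}$ minus its mean, and the computation of $F_2$ in the proof of Theorem~\ref{THEOREM1} writes $F_2(u,s)$ as a boundary trace of terms $\tilde{a}^s_j(x)\,\abl{u}{x_i}$ plus a term linear in $\tilde{a}^s$; here $a^s, b^s, \tilde{a}^s$ are universal polynomial expressions in the first and second partials of $h_s$, in the entries of $(\nabla h_s)^{-1}$ and $(\nabla h_s)^{-T}$ (via the cofactor representation, hence rational in the partials of $h_s$ with denominator $\text{det}(\nabla h_s)$), in $\|(\nabla h_s)^{-T}\nu_{\mathcal{P}}\|_2^{-1}$, and in the extended normal field $\nu_{\mathcal{P}}$. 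For $s$ in a compact neighborhood of $0$ one has $\text{det}(\nabla h_s)$ bounded uniformly away from $0$ (continuity of $s\mapsto h_s$ into $C^1$ and $h_0=\text{id}_{\bar{Y}}$), so each $a^s, b^s$ lies in $C^{0,1}(\bar{\mathcal{P}})$ and each $\tilde{a}^s$ in $C^{1,1}$ on a tubular neighborhood of $\partial^\text{int}\mathcal{P}$, cf.~Theorem~\ref{TheoremTubNeigh}; multiplication by such functions is bounded $H^2(\mathcal{P})\to L^2(\mathcal{P})$, respectively bounded into $H^\frac{1}{2}(\partial^\text{int}\mathcal{P})$ after taking the trace. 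Since $s\mapsto h_s$ is $C^m$ into $C^{2,1}(\bar{Y},\mathbb{R}^d)$ and the algebraic operations involved (products, reciprocal of the nonvanishing determinant, square root of the positive quantity $\|(\nabla h_s)^{-T}\nu_{\mathcal{P}}\|_2^2$) are smooth on the relevant open sets, the maps $s\mapsto a^s_{ij}$, $s\mapsto b^s_i$, $s\mapsto \tilde{a}^s_j$ are $C^m$ into the respective multiplier spaces; composing with the fixed bounded pairings yields $F\in C^m$ near $(u,0)$, and the same argument applied termwise gives the continuity of $F^{\prime}_u$.

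I expect this coefficient bookkeeping to be the main — though essentially routine — obstacle: one must track precisely which Hölder class every coefficient occupies so that the multiplier estimates on $H^2(\mathcal{P})$ and $H^\frac{1}{2}(\partial^\text{int}\mathcal{P})$ are available, and verify smoothness of the nonlinear algebraic operations (above all the division by $\text{det}(\nabla h_s)$ and the normalization of the normal vector) as maps between these spaces. This is the parametrized counterpart of the continuity computations already carried out for Theorem~\ref{THEOREM1}, with no new conceptual ingredient; everything else is a direct invocation of the Banach-space implicit function theorem, exactly as for the simpler parametric families treated in~\cite{Schulz2017, eden2021multiscale}.
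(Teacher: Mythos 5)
Your proposal is correct and follows essentially the same route as the paper: the same parametrized map $F(u,s)$, the Banach-space implicit function theorem of~\cite{Valent}, bijectivity of $F^{\prime}_u(u,0)$ via Lemma~\ref{Lemma: H2regular}, and the observation that (affine-)linearity in $u$ reduces $C^m$-regularity of $F$ to the $C^m$-dependence of the coefficient functions on $s$ through the chain-rule expansion~(\ref{EQ: ContinuityF1}). The paper's proof is a one-line appeal to the implicit function theorem resting on the preceding discussion; your coefficient bookkeeping simply makes that discussion explicit.
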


\begin{proof}
This is again an~immediate consequence of the implicit function theorem for Banach spaces as given in \cite{Valent}.
\end{proof}

Again, we can leverage this regularity to the diffusivity tensors (\ref{Def: DiffusionTensor}). 
\begin{Korollar}
\label{Kor: DiffDiffbar}
Under the assumptions of Theorem \ref{THEOREM1b} the mapping 
\begin{align*}
    R: V \to \mathbb{R} ,\quad  s\mapsto \int_{h_s(\mathcal{P})}  \nabla h_s^{*-1}g(s) \; dy
\end{align*} 
is $m$-times continuously differentiable. Therefore, the diffusion tensor~(\ref{Def: DiffusionTensor}) depends $C^m$-regularly on variations of the domain $\mathcal{P}$ along a~path of diffeomorphisms of specified regularity. 
\end{Korollar}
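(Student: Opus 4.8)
The plan is to transfer the $C^m$-regularity of the resolution map $g:V\to H^2_{\#,0}(\mathcal{P})$ from Theorem~\ref{THEOREM1b} to the functional $R$ by writing $R$ as a composition of $g$ with a smooth map on $H^2_{\#,0}(\mathcal{P})\times(-S,S)$. First I would pull the integral back onto the fixed reference domain $\mathcal{P}$ via the change-of-variables formula exactly as in~(\ref{EQ:KorDiffConti}) in the proof of Corollary~\ref{Kor: DiffConti}, obtaining
\begin{align*}
    R(s) = \int_{\mathcal{P}} \nabla g_s(x)\cdot \nabla h_s(x)^{-1}\cdot \mid \text{det}(\nabla h_s(x))\mid\; dx,
\end{align*}
where $g_s := g(s)\in H^2_{\#,0}(\mathcal{P})$. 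The point of moving to the reference domain is that the domain of integration no longer depends on $s$, so differentiating $R$ amounts to differentiating the integrand.

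Next I would argue that the map $(u,s)\mapsto \int_{\mathcal{P}} \nabla u(x)\cdot \nabla h_s(x)^{-1}\cdot \mid\text{det}(\nabla h_s(x))\mid\; dx$ is itself $C^m$ on $H^2_{\#,0}(\mathcal{P})\times(-S,S)$. In the first argument $u$ this functional is linear and bounded (the weight $\nabla h_s^{-1}\mid\text{det}\,\nabla h_s\mid$ is in $C^{1,1}(\bar Y)\hookrightarrow L^\infty$, so the integral defines a continuous linear functional on $H^1(\mathcal{P})\supset H^2_{\#,0}(\mathcal{P})$), hence trivially $C^\infty$ in $u$. In the second argument $s$, the weight depends on $s$ only through $\nabla h_s$ and $\det\nabla h_s$; since $s\mapsto h_s$ is a $C^m$-curve into $C^{2,1}(\bar Y,\mathbb{R}^d)$, the map $s\mapsto \nabla h_s\in C^{1,1}(\bar Y,\mathbb{R}^{d\times d})$ is $C^m$, and composing with the smooth (on the open set of invertible matrices, using the uniform bound of $\det\nabla h_s$ away from zero guaranteed by $h_s$ being a diffeomorphism) maps $A\mapsto A^{-1}$ and $A\mapsto\mid\det A\mid$ preserves $C^m$-regularity, using that these are Nemytskii operators with smooth symbols acting on the Banach algebra $C^{1,1}(\bar Y)$. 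Multiplying the two $C^m$ factors and testing against the bounded linear functional "integrate against $\nabla u$" keeps $C^m$-regularity jointly in $(u,s)$ by the product and chain rules for Fréchet-differentiable maps between Banach spaces.

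Finally, $R = \Xi\circ(g,\text{id})$ where $\Xi(u,s)$ is the $C^m$ functional just described and $g\in C^m(V;H^2_{\#,0}(\mathcal{P}))$ by Theorem~\ref{THEOREM1b}; the chain rule then yields $R\in C^m(V;\mathbb{R})$. The same argument applied componentwise gives $C^m$-dependence of every entry $\mathbb{D}_{i,j}$ (replacing the role of $\zeta_1$ by $\zeta_j$, i.e.\ the shift $-e_1$ by $-e_j$ in the definition of $F_2$, which changes nothing in the regularity bookkeeping), and since $\phi(s)=\mid h_s(\mathcal{P})\mid=\int_{\mathcal{P}}\mid\det\nabla h_s\mid\,dx$ and $\sigma(s)=\int_{\partial^{\text{int}}\mathcal{P}}\mid\text{cof}(\nabla h_s)\,\nu_{\mathcal{P}}\mid\,d\sigma$ are manifestly $C^m$ by the same Nemytskii-operator reasoning, the whole partial coupling is $C^m$ in $s$. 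The main obstacle I anticipate is purely bookkeeping: verifying carefully that the Nemytskii operators $A\mapsto A^{-1}$ and $A\mapsto\mid\det A\mid$, as well as multiplication, are $C^m$ as maps of the Hölder space $C^{1,1}(\bar Y)$ into itself — this needs the algebra property of $C^{1,1}$ and the uniform lower bound on $\mid\det\nabla h_s\mid$ over the (relatively compact) parameter range, both of which are already built into the hypotheses — and then tracking Fréchet derivatives through the composition; no genuinely new idea beyond Theorem~\ref{THEOREM1b} is required.
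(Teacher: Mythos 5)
Your proposal is correct and follows essentially the same route as the paper: pull the integral back to the fixed reference domain via the change-of-variables identity~(\ref{EQ:KorDiffConti}), observe that $s\mapsto\nabla h_s$ inherits $C^m$-regularity, and conclude by the product and chain rules for Fréchet-differentiable maps together with the $C^m$-regularity of $g$ from Theorem~\ref{THEOREM1b}. You merely spell out in more detail (via the smoothness of $A\mapsto A^{-1}$ and $A\mapsto\det A$ on the invertible matrices and the algebra property of the Hölder spaces) what the paper compresses into ``revisiting the calculations performed in~(\ref{EQ:KorDiffConti}) and using the product rule.''
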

\begin{proof}
Since the map $s \mapsto h_s$ is $C^m$-regular, we establish the same degree of Fréchet differentiability in the spatial derivatives $s \mapsto \nabla h_s$ as mappings $(-S,S) \to C^1(\bar{Y},\bar{Y})$ with respect to the corresponding norms. Revisiting the calculations performed in (\ref{EQ:KorDiffConti}) and using the product rule for Fréchet differentiable functions, we conclude the assertion.
\end{proof}

\begin{Bem}
Due to the mathematical structure of the problem, Theorem \ref{THEOREM1b} holds analogously for families of diffeomorphisms that are parameterizable by a~finite number of parameters. As such, the real-valued order parameter $s$ can be replaced by its vector-valued analogue, allowing for more sophisticated couplings and more complex geometries. A~natural field of application is posed by two-mineral-phase solids, cf.~\cite{Gaerttner2020bPreprint}, where the two interacting phases obey their distinct evolution laws.   
\end{Bem}

\subsection{Existence for partial coupling}
\label{SEC:DiffExist1Sided}

In this section, we present an~existence result for strong local-in-time solutions to~(\ref{EQ:transportDIFF}) under the assumption of no back-coupling from the macro- to the micro-scale, cf.\ Figure~\ref{fig:Coupling}. That is, we assume the evolution of the underlying pore geometry to be known a-priori. As such, the treatment of the problem is accessible more easily in comparison to the fully coupled system which we will discuss in Section~\ref{SEC:DiffExist2Sided}. In~\cite{Schulz2017}, a~similar fully coupled problem was solved under the assumption of effective parameters being parameterizable by the porosity $\phi$ in a~smooth and a-priori known way. As opposed to this restriction, we will introduce a~new order parameter $s$ which corresponds to the parametrization in the path of diffeomorphisms used to describe evolved initial geometries. Particularly, this approach allows for the apposite description of multiple geometries which admit the same porosity. Due to the identical structure of the model, we will use the methods of~\cite{Schulz2017} for our following analysis.

\begin{figure}[!h]
    \centering
    \includegraphics[width=1\textwidth]{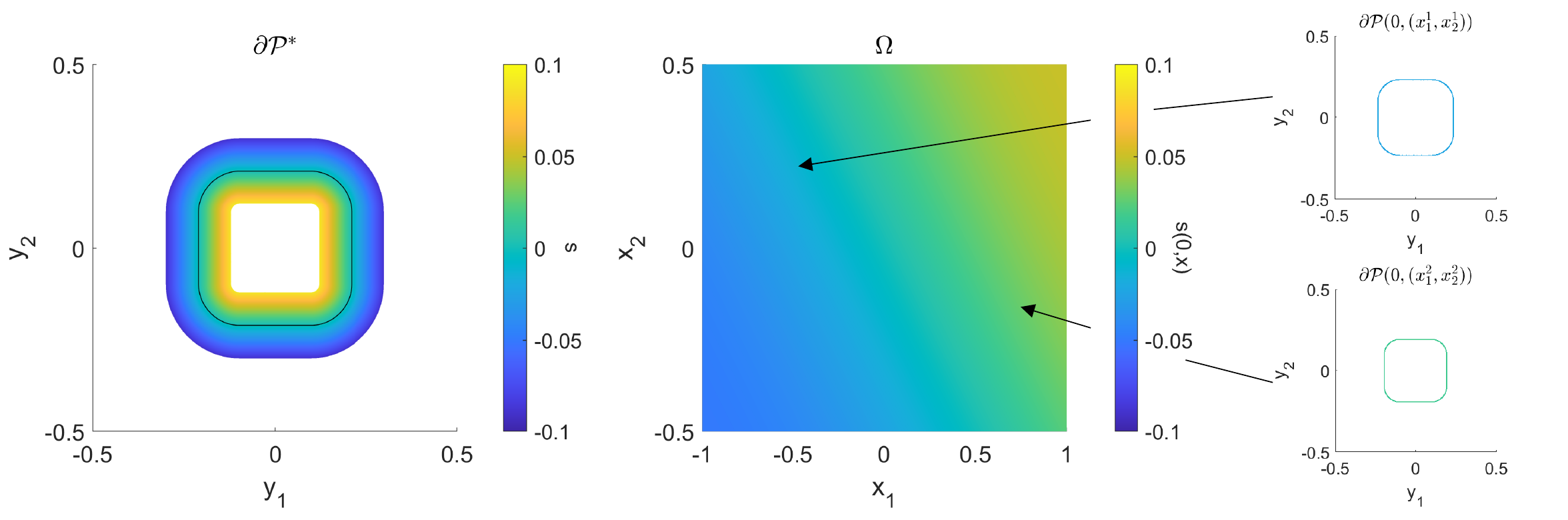}
    \caption{Left: Master unit-cell $Y^*$ with interior boundary $\partial^\text{int}\mathcal{P}^*$ in black corresponding to $s=0$. Each color corresponds to the interior boundary of a~deformed cell which is reachable from $\mathcal{P}^*$ along a~path of diffeomorphisms parameterized by $s$. Right: Macroscopic domain $\Omega$ colored according to the initial underlying geometry displayed in the left image. For the two exemplary macroscopic points $x_1,x_2\in\Omega$ the associated microscopic geometry is displayed. }
    \label{fig:MasterCell}
\end{figure}

To this point, we considered an~initial geometry $\mathcal{P}$ of class $C^{2,1}$ and a~parameterized path of diffeomorphisms $h:(-S,S)\to \text{Diff}_\Box^{2,1}(\bar{Y})$ of class $C^1$ with $S\in\mathbb{R}^+$ and $h_0 = \text{id}_{\bar{Y}}$.  In order to allow for spatial variations of effective parameters, we realize the prescription of the geometry evolution by specifying $s: \Omega_T \to (-S,S)$. That is, we assume the state of each microscopic unit-cell $Y(t,x)$ to be given as the state of a~single master unit-cell $Y^*$ at time-parameter $s(t,x)$. The corresponding setup is visualized in Figure~\ref{fig:MasterCell} illustrating the assignment of initial conditions on $\Omega$. Accordingly, the effective parameters of~(\ref{EQ:transportDIFF}) are given by $\phi(s),\;  \sigma(s),\; \ID (s)$. Note that we perform the necessary redefinition of the functions $\phi,\;  \sigma,\; \ID$ to mappings from an open interval of~$\mathbb{R}$ without change of notation. We furthermore restrict to the non-degenerative case, i.e.\ we assume:
\begin{align}
\label{AssumptionsOneSidedDiff}
   \forall s\in (-S,S): 0< \phi(s) <1,  \quad \sigma(s) >0, \quad \ID(s)>0,
\end{align}
where the last inequality hold in the sense of matrices (Loewner partial ordering). This assumption is naturally fulfilled for small times $t>0$ by suitably prepared initial conditions. Moreover, we restrict our consideration to locally Lipschitz reaction rates $f(c)$ generalizing the linear reaction rates prescribed in~\cite{Schulz2017}. Finally, we state the following anisotropic Sobolev spaces
\begin{align*}
    \mathcal{X}_1:=W^{1,2}_r(\Omega_T) = L^r(0,T;W^{2,r}(\Omega)) \cap W^{1,r}(0,T;L^r(\Omega)),
\end{align*}
with $r>d+2$ which play a~crucial role in the subsequent existence result. Following the major steps of~\cite{Schulz2017}, we have:

\begin{Theorem} \textit{Existence of strong solutions, partial coupling, diffusive transport}\\
Let $\Omega\subset\mathbb{R}^d,\; d\in\{2,3\}$, be a~$C^2$-domain, $r>d+2$ with initial conditions $c_0\in W^{2-\frac{2}{r},r}(\Omega)$ and Dirichlet boundary conditions $C_0 \in W^{1-\frac{1}{2r}, 2-\frac{1}{r}}_r(\partial \Omega_T)$ being compatible in the sense of $C_0(0,\cdot)=c_0$ on $\partial\Omega$. Moreover, let the evolution of the pore-space geometry be given by an~order parameter $s\in C^1(\overline{\Omega_{T_1}})$, $s(t,x)\in (-S+\epsilon,S-\epsilon),\; \epsilon>0$, \mbox{$\forall (t,x)\in\Omega_T$}, and a~path $h\in C^1(-S,S;\text{Diff}_{\Box}^{2,1}(\bar{Y}))$ of diffeomorphisms such that $h_0 = \text{id}_{\bar{Y}}$. Assume the initial inclusion $Y^*\setminus \bar{\mathcal{P}} \subset Y^*$ to be a~compactly contained $C^{2,1}$ open set. Let~(\ref{AssumptionsOneSidedDiff}) hold true and $f$ be locally Lipschitz. Then there exists a~time $0<T\leq T_1$ such that (\ref{EQ:transportDIFF}) admits a~unique solution $c\in \mathcal{X}_1$. 
\label{Theorem:DiffOneSideExist}
\end{Theorem}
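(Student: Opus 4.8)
The plan is to prove Theorem~\ref{Theorem:DiffOneSideExist} by a Banach fixed-point argument in the space $\mathcal{X}_1 = W^{1,2}_r(\Omega_T)$, following the scheme of~\cite{Schulz2017}. Given the a-priori prescribed order parameter $s\in C^1(\overline{\Omega_{T_1}})$ taking values in a compact subinterval of $(-S,S)$, the effective coefficients become genuine space-time functions: set $\phi(t,x):=\phi(s(t,x))$, $\sigma(t,x):=\sigma(s(t,x))$, and $\mathbb{D}(t,x):=\mathbb{D}(s(t,x))$. By Corollary~\ref{Kor: DiffDiffbar}, the map $s\mapsto\mathbb{D}(s)$ is $C^1$ (indeed $C^m$), so $\mathbb{D}$ is as regular in $(t,x)$ as $s$ is, i.e.\ $C^1(\overline{\Omega_{T_1}})$ up to composition; likewise for $\phi$, $\sigma$, and $\partial_t\phi = \phi'(s)\,\partial_t s$. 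The non-degeneracy assumption~(\ref{AssumptionsOneSidedDiff}), combined with compactness of $s(\overline{\Omega_{T_1}})$, gives uniform ellipticity: there exist $0<\lambda\le\Lambda$ with $\lambda|\xi|^2\le \xi^T\mathbb{D}(t,x)\xi\le\Lambda|\xi|^2$ and $0<\phi_*\le\phi(t,x)\le\phi^*<1$. This is precisely the data needed to invoke the linear parabolic $L^r$-theory of Theorem~\ref{TheoremLadyzenkaya} in the appendix.

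The fixed-point map is constructed as follows. For $\bar c$ in a suitable closed bounded convex subset $M$ of $\mathcal{X}_1$, define $c=\mathcal{T}(\bar c)$ to be the unique solution of the linear parabolic problem
\begin{align*}
\phi\,\partial_t c - \nabla\cdot(\mathbb{D}\nabla c) = \sigma f(\bar c) - (\partial_t\phi)\,\bar c \quad\text{in }(0,T)\times\Omega,
\end{align*}
with boundary data $C_0$ on $\partial\Omega_T$ and initial data $c_0$. Dividing by $\phi$ (legitimate by the uniform lower bound) puts this in the standard form covered by Theorem~\ref{TheoremLadyzenkaya}; since $r>d+2$, the embedding $\mathcal{X}_1\hookrightarrow C(\overline{\Omega_T})$ holds, so $f(\bar c)\in L^\infty(\Omega_T)\subset L^r(\Omega_T)$ (using that $f$ is locally Lipschitz and $\bar c$ is bounded on $M$), and $(\partial_t\phi)\bar c\in L^r(\Omega_T)$; the compatibility of $c_0$ and $C_0$ is assumed. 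Hence $\mathcal{T}$ is well-defined $\mathcal{X}_1\to\mathcal{X}_1$ with the a-priori estimate $\|c\|_{\mathcal{X}_1}\le C(\|f(\bar c)\|_{L^r}+\|(\partial_t\phi)\bar c\|_{L^r}+\|c_0\|+\|C_0\|)$.

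Then I would show $\mathcal{T}$ maps a ball into itself and is a contraction, both for $T$ small. For self-mapping: pick $R$ larger than the norm of the solution with zero reaction and zero $\partial_t\phi$-term; the extra contributions from $f(\bar c)$ and $(\partial_t\phi)\bar c$ carry a factor that is $o(1)$ as $T\to0$ — the key point is that the parabolic estimate constant $C$ can be taken uniform in $T\le T_1$ (standard, via extension/reflection or by tracking the Newton-potential constants), while $\|g\|_{L^r(\Omega_T)}\le T^{1/r}\|g\|_{L^\infty}$ for the relevant source terms. For contraction: given $\bar c_1,\bar c_2\in M$, the difference $c_1-c_2$ solves the same type of linear problem with homogeneous initial/boundary data and right-hand side $\sigma(f(\bar c_1)-f(\bar c_2)) - (\partial_t\phi)(\bar c_1-\bar c_2)$; using the local Lipschitz bound for $f$ on the compact range of $M$ and again the factor $T^{1/r}$, one gets $\|c_1-c_2\|_{\mathcal{X}_1}\le \kappa(T)\|\bar c_1-\bar c_2\|_{\mathcal{X}_1}$ with $\kappa(T)\to0$. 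Choosing $T\le T_1$ so that $\kappa(T)<1$ and the self-mapping holds, Banach's fixed-point theorem yields a unique fixed point $c\in M$, which is the desired strong solution; uniqueness in all of $\mathcal{X}_1$ (not just in $M$) follows from the same contraction estimate applied on a possibly shorter interval, or by a Gronwall argument on $\|c_1-c_2\|$.

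The main obstacle is getting a parabolic a-priori estimate whose constant does \emph{not} blow up as $T\to0$ and does not degrade too badly as the coefficients vary — i.e.\ a version of Theorem~\ref{TheoremLadyzenkaya} with constant depending only on the ellipticity bounds $\lambda,\Lambda$, the modulus of continuity of $\mathbb{D}$ (controlled uniformly via $C^1$-dependence of $s\mapsto\mathbb{D}(s)$ and $s\in C^1$), the $W^{1,\infty}$-norm of $\phi$, and $T_1$, but not on $T$ itself. This is what makes the smallness factor $T^{1/r}$ in front of the reaction and porosity-drift terms actually buy a contraction. A secondary, more technical point is verifying that $\partial_t\phi = \phi'(s)\partial_t s$ genuinely lies in $L^r(\Omega_T)$ with the right bound — this is immediate here since $s\in C^1(\overline{\Omega_{T_1}})$ and $\phi'$ is continuous on the compact range of $s$, so $\partial_t\phi\in L^\infty(\Omega_T)$, but it is the place where the regularity of the prescribed geometry evolution is essential and where the partial-coupling assumption genuinely simplifies matters compared to Section~\ref{SEC:DiffExist2Sided}.
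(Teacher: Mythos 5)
Your proposal is correct in substance but runs on a different engine than the paper: you set up a Banach contraction in $\mathcal{X}_1$, whereas the paper applies Schauder's fixed-point theorem to $\mathcal{F}=\mathcal{F}_2\circ\mathcal{F}_1$ on the ball $\mathcal{K}_1$, using the compact embeddings $\mathcal{X}_1\hookrightarrow W^{\frac34,\frac32}_{2r}(\Omega_T)$ and $\mathcal{X}_1\hookrightarrow C^{\frac12,1}(\overline{\Omega_T})$ to make $\mathcal{F}_1$ compact, and then proves uniqueness separately by an $L^2$-energy/Gronwall argument on the difference equation (\ref{EQ:DiffUniqueness}). Your route buys uniqueness of the fixed point for free inside the ball, but it pays for this by requiring exactly the issue you flag as the ``main obstacle'': a contraction constant $\kappa(T)\to 0$ forces you to control how both the parabolic constant $C_p$ of Theorem~\ref{TheoremLadyzenkaya} and the embedding constant of $\mathcal{X}_1\hookrightarrow C(\overline{\Omega_T})$ behave as $T\to 0$ (the latter degenerates for functions without a fixed initial trace; the standard repair is to work with differences, which have zero initial data and extend by zero to $\Omega_{T_1}$). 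Schauder sidesteps the contraction estimate entirely, needing only self-mapping plus compactness, which is why the paper prefers it. Two smaller points where your sketch is thinner than the paper's proof: first, the $C^1$-regularity of $\phi\circ s$ and the continuity of $\sigma\circ s$ are \emph{not} consequences of Corollary~\ref{Kor: DiffDiffbar} (which concerns $\ID$ only) --- the paper derives them separately via $\phi(s)=\int_{\mathcal{P}}\lvert\det\nabla h_s\rvert\,dx$ and via a local parameterization $\hat{\varphi}(s,x)=h_s(\varphi(x))$ of the evolving interface together with the surface-measure formula and a partition of unity; you should not simply write ``likewise for $\phi$, $\sigma$''. Second, note that the paper's linearization (\ref{EQ: FixedPointTransDiff}) keeps the principal part in divergence form $\nabla\cdot(\frac{\ID}{\phi}\nabla c)$ and therefore carries the extra frozen term $\frac{\ID}{\phi^2}\nabla[\phi(s)]\cdot\nabla\tilde c$ on the right-hand side, estimated via $\|\nabla\tilde c\|_{L^{2r}}$ with a $T^{\frac{1}{2r}}$ gain; your non-divergence-form linearization avoids this term but must then place $\partial_j(\ID_{ij}/\phi)$ among the first-order coefficients $a_i$, which is legitimate since $\ID\circ s\in C^1(\overline{\Omega_{T_1}})$. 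With these points filled in, your argument goes through.
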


\begin{proof}
First, we note that using the results of Theorem~\ref{THEOREM1b} the mapping $\ID: (-S,S) \to \mathbb{R}^{d,d}$ is of class $C^1$ and accordingly $\ID\circ s \in C^1(\overline{\Omega_{T_1}})$. Similarly, we obtain $\phi\in C^1((-S,S))$ as a~consequence of the following representation
\begin{align*}
    \phi(s) =\int\limits_{h_s(\mathcal{P})} 1\; dy = \int\limits_{\mathcal{P}} \lvert\text{det} \left(\nabla h_s\right)\rvert\; dx
\end{align*}
using the change of variables theorem. In order to establish regularity for $\sigma$, we consider $h(\partial^\text{int}\mathcal{P})$ as an~evolving manifold. Let $\varphi:U\subset \mathbb{R}^{d-1}\to \mathbb{R}^d$ be a~suitable local parameterization of $\partial^\text{int}\mathcal{P}$. Then, $\hat{\varphi}: (-S,S) \times U \to \mathbb{R}^d$ defined as
\begin{align*}
    \hat{\varphi}(s,x) := h_s(\varphi(x))
\end{align*}
is a~parameterization of $h_s(\partial^\text{int}\mathcal{P})$ for each $s\in (-S,S)$. Using the regularity of $\hat{\varphi}$ we infer continuity of the mapping
\begin{align*}
    s \mapsto \int\limits_{h_s(\varphi(U))} 1 \; d\sigma = \int\limits_{U} \sqrt{\text{det}\left(\nabla \hat{\varphi} (\nabla \hat{\varphi} )^T \right) }  \; dx
\end{align*}
which translates to the continuity of $\sigma$ by using a~partition of unity subordinate to the domains of parameterization, cf.~\cite{RiemannManifolds}.
As a~result of the continuity of all effective parameters with respect to $s$, there exists a~$\delta \in (0,1)$ such that:
\begin{align}
\label{EQ:BoundsExist1}
   \forall s\in (-S+\epsilon,S-\epsilon) : \; \delta< \phi(s) <1-\delta,  \quad \sigma(s) > \delta, \quad \ID(s)>\delta \mathds{1}_d.
\end{align}
Note that the continuity of the eigenvalues is inherited from the continuity of $\ID$, cf.~\cite{KatoPerturbation}. Following the technique presented in~\cite{Schulz2017}, the proof is now based on Schauder's fixed point theorem applied to the set
\begin{align*}
    \mathcal{K}_1 = \left\lbrace c\in \mathcal{X}_1 : ||c||_{\mathcal{X}_1}\leq K \right\rbrace
\end{align*}
for a~constant $K\geq 1$ chosen appropriately later. Apparently, $\mathcal{K}_1$ is a~convex, closed and bounded subset of $\mathcal{X}_1$. In order to apply standard linear solution theory of parabolic equations, we rewrite equation~(\ref{EQ:transportDIFF}) in the following fixed-point form:
\begin{align}
\label{EQ: FixedPointTransDiff}
    \partial_t c -\nabla \cdot\left( \frac{\ID(s)}{\phi(s)} \nabla c\right) = -\frac{\partial_t [\phi(s)]}{\phi(s)} \tilde{c} + \frac{\sigma(s) }{\phi(s)} f(\tilde{c}) + \frac{\ID(s)}{\phi(s)^2} \nabla [\phi(s)] \cdot \nabla \tilde{c}.
\end{align}

Now consider the mapping $\mathcal{F}_1:\mathcal{K}_1\to L^{r}(\Omega_T)$ mapping a~concentration $\tilde{c}\in \mathcal{K}_1$ to the right-hand side of~(\ref{EQ: FixedPointTransDiff}). Using the compact embeddings 
\begin{align}
\label{CompactEmbedX1}
\mathcal{X}_1 \hookrightarrow W^{\frac{3}{4},\frac{3}{2}}_{2r}(\Omega_T), \quad \mathcal{X}_1 \hookrightarrow C^{\frac{1}{2},1}(\overline{\Omega_T}),  
\end{align}
$\mathcal{F}_1$ shows to be compact, cf.~\cite{Schulz2017}. Furthermore, the parabolic theory of Theorem~\ref{TheoremLadyzenkaya} delivers a~continuous solution operator $\mathcal{F}_2:  L^{r}(\Omega_T) \to W^{1,2}_r(\Omega_T)$ to~(\ref{EQ: FixedPointTransDiff}). More precisely, we apply Theorem~\ref{TheoremLadyzenkaya} to the above prescribed initial and boundary conditions with coefficients defined as
\begin{align*}
    a_{i,j}(t,x) = \frac{\ID_{i,j}(s(t,x))}{ \phi(s(t,x))},\quad a_i(t,x) = -\sum\limits_{j=1}^d \partial_j \left( \frac{\ID_{i,j}(s(t,x))}{ \phi(s(t,x))}\right) , \quad a(t,x) = 0
\end{align*}
and source term $f$ according to the right-hand side of (\ref{EQ: FixedPointTransDiff}).

As such, for a~given $\tilde{c}\in \mathcal{X}_1$, we have 
\begin{align*}
    ||c||_{\mathcal{X}_1} \leq C_p \left(\text{DC}+
    \left \lVert-\frac{\partial_t [\phi(s)]}{\phi(s)} \tilde{c} + \frac{\sigma(s)}{\phi(s)} f(\tilde{c}) + \frac{\ID(s)}{\phi(s)^2} \nabla [\phi(s)] \cdot \nabla \tilde{c}\right\rVert_{L^r(\Omega_T)}  \right),
\end{align*}
abbreviating the contributions from initial and boundary data by
\begin{align*}
\text{DC}:=||c_0||_{W^{2-\frac{2}{r},r} (\Omega)} + ||C_0||_{W^{1-\frac{1}{2r}, 2-\frac{1}{r}}_r(\partial \Omega_T)}.    
\end{align*}
Due to $f$ being locally Lipschitz, there exists a~monotone function $\tilde{f}$ satisfying 
\begin{align}
\label{EQ:MonotnoneBound}
\tilde{f}: [0,\infty) \to \mathbb{R}, \quad    |f(x)| \leq \tilde{f}(|x|),
\end{align}
Similar to the estimates established in~\cite{Schulz2017}, we obtain using Hölder's inequality and~(\ref{EQ:MonotnoneBound})

\begin{align*}
||c||_{\mathcal{X}_1} &\leq C_p \left(\text{DC} + C_s T^\frac{1}{r} |\Omega|^\frac{1}{r}\left(||\tilde{c}(t)||_{L^\infty(\Omega_T)}  + \tilde{f}(||\tilde{c}||_{L^\infty (\Omega)}) \right) \right)   \\
&+ C_p C_s \left(T^\frac{1}{2r} ||\nabla [\phi(s)] ||_{L^\infty(0,T; L^{2r}(\Omega))}   ||\nabla \tilde{c} ||_{L^{2r}(0,T; L^{2r}(\Omega))} \right),  \nonumber
\end{align*}
with constant $C_s$ depending on the bounds of $\sigma, \phi,\partial_t \phi,\ID$, cf.~(\ref{EQ:BoundsExist1}).
By the embeddings~(\ref{CompactEmbedX1}), every appearing norm of $\tilde{c}$ is bounded by a~multiple of $||\tilde{c}||_{\mathcal{X}_1}$ and therefore by a~multiple of $K$. As such, we obtain the self-mapping property of $\mathcal{F}=\mathcal{F}_2\circ\mathcal{F}_1: \mathcal{K}_1 \to \mathcal{K}_1$ for sufficiently large $K$ and sufficiently small $T$, finishing the existence proof. Uniqueness follows analogously to~\cite{Schulz2017} by consider two solutions $c_1,c_2\in \mathcal{X}_1$ as well as their difference $\bar{c}=c_2-c_1$. Subtracting both associated equations in fixed-point form~(\ref{EQ: FixedPointTransDiff}), we obtain
\begin{align}
\label{EQ:DiffUniqueness}
    \partial_t \bar{c}  -\nabla \cdot\left( \frac{\ID(s)}{\phi(s)} \nabla \bar{c}\right) = -\frac{\partial_t [\phi(s)]}{\phi(s)} \bar{c} + \frac{\sigma(s) }{\phi(s)} \left(f(c_2)-f(c_1)\right) + \frac{\ID(s)}{\phi(s)^2} \nabla [\phi(s)] \cdot \nabla \bar{c}.
\end{align}
Testing~(\ref{EQ:DiffUniqueness}) with $\bar{c}$ and estimating the right-hand side with Hölder's and Young's inequality shows
\begin{align}
\label{EQ:DiffUniqueness2}
    \frac{1}{2} \partial_t ||\bar{c}||^2_{L^2(\Omega)}&\leq \left( \left\lVert \frac{\partial_t [\phi(s)]}{\phi(s)} \right\rVert_{L^\infty(\Omega)}  + L \left\lVert\frac{\sigma(s) }{\phi(s)} \right\rVert_{L^\infty(\Omega)}  + C(\epsilon)\left\lVert\frac{\ID(s)}{\phi(s)^2} \nabla [\phi(s)] \right\rVert_{L^\infty(\Omega)}\right)  ||\bar{c}||^2_{L^2(\Omega)}\nonumber \\
    &-  \int\limits_\Omega \frac{\ID(s)}{\phi(s)}\nabla \bar{c} \cdot \nabla \bar{c} \; dx + \epsilon \left\lVert\frac{\ID(s)}{\phi(s)^2} \nabla [\phi(s)] \right\rVert_{L^\infty(\Omega)} ||\nabla \bar{c}||^2_{L^2(\Omega)}, 
\end{align}
where $L$ denotes the Lipschitz constant of $f$ with respect to the compact interval 
\begin{align*}
\left[-\max\limits_{i\in\{1,2\}} \left\lbrace||c_i||_{C^0(\overline{\Omega_T})}\right\rbrace,\; \max\limits_{i\in\{1,2\}} \left\lbrace||c_i||_{C^0(\overline{\Omega_T})}\right\rbrace\right]    .
\end{align*}
By the uniform coercivity of $\ID$, we can absorb the last addend of~(\ref{EQ:DiffUniqueness2}) into the diffusion term for sufficiently small $\epsilon>0$. Uniqueness now follows from Gronwall's inequality. 
\end{proof}

\subsection{Level-set equation induced diffeomorphisms}
\label{SEC:FlowInducedDiffeomorphisms}
The smooth dependence results derived in Section~\ref{SEC:diff} are based on geometry deformation by smooth paths of diffeomorphisms. In the following, we investigate under which conditions on the initial geometry and normal velocity field alterations performed by the level-set equation~(\ref{EQ:levelSet}) induce such paths. To do so, we make use of the method of characteristics. As a~result, we can replace the assumption of a~prescribed path of diffeomorphisms in Theorem~\ref{Theorem:DiffOneSideExist} by a~prescribed normal velocity field $v_n(t,x,y)$ and let the geometry evolve according to the level-set equation which is a~much more natural setup from the viewpoint of applications. At first, we must fix the class of real-valued functions whose level-sets are guaranteed to be smooth submanifolds of codimension one:

\begin{mydef}\textit{Regular level-set function} \\
Let $\Gamma \subset Y$ denote the boundary of an~open set of class $C^{k,\alpha}$, $k\geq 2$, compactly contained in $Y$. Then we call a~function $\Phi \in C^{k,\alpha}(Y)$ \textit{regular level-set function} associated to $\Gamma$ iff $\Gamma = \{y\in Y:\; \Phi(y) = 0\}$ and $\nabla \Phi = -\nu$ on $\Gamma$.
\label{DEF:regularLevelSetFunction}
\end{mydef}

\begin{Bem}
For every manifold $\Gamma$ as given in Definition~\ref{DEF:regularLevelSetFunction} there exists an~associated regular level-set function, cf.~\cite{Henry} Chapter~1 or Theorem~\ref{TheoremTubNeigh}.
\end{Bem}

In order to apply the theory for non-linear first-order PDEs, we rewrite the level-set equation~(\ref{EQ:levelSet}) in the form $F(\vec{x},\Phi,D\Phi)=0$ with $F:(\vec{x},z,\vec{p})\mapsto\mathbb{R}$. Note that in this notation, $\vec{x}=(x,t)$, $\vec{p}=(p,p_{d+1})$ corresponds to the spatial and temporal variable, $D=(\nabla_x,\partial_t)$ denotes the related differential operator. Apparently, we have
\begin{align*}
    F(\vec{x},z,\vec{p}) = p_{d+1}+v_n(\vec{x}) |p|. 
\end{align*}
with the normal interface velocity $v_n$ of~(\ref{EQ:levelSet}). Prescribing $v_n$ smooth such that it vanishes in a~neighborhood of $\{|p|=0 \}$, we have \mbox{$F\in C^2(\mathbb{R}^{2d+3},\mathbb{R})$}.
Switching to the characteristic system of ODEs, the equations read
\begin{align}
\label{EQ:characteristics}
    \dot{\vec{p}}(s) &=- \partial_{\vec{x}} F (\vec{x}(s),z(s),\vec{p}(s)) - \partial_{z} F (\vec{x}(s),z(s),\vec{p}(s)) \vec{p}(s)  = -Dv_n(\vec{x}) |p|, \nonumber \\
    \dot{z}(s) &= \partial_{\vec{p}} F(\vec{x}(s),z(s),\vec{p}(s))\cdot \vec{p}(s) = v_n(\vec{x}) \frac{p}{|p|}\cdot p + p_{d+1} = 0,\\
    \dot{\vec{x}}(s) &= \partial_{\vec{p}} F (\vec{x}(s),z(s),\vec{p}(s)) = \left(v_n(\vec{x}) \frac{p}{|p|}, 1\right), \nonumber
\end{align}
with initial conditions
\begin{align}
\label{EQ:characteristicsIC}
    \vec{p}(0) = \left(\nabla \Phi(0,y), -v_n(y)|\nabla \Phi(0,y)| \right), \quad z(0) = \Phi(0,y), \quad \vec{x}(0)=(y,0).
\end{align}
According to~(\ref{EQ:characteristics}), the projected characteristics of a~solution to~(\ref{EQ:levelSet}) move in normal direction to the interface with speed $v_n(\vec{x})$. Furthermore, the function value of $\Phi$ remains constant along trajectories. As such, the zero-level set describing the position of the fluid-solid interfaces is transported along $x$. Note that every admissible point of the characteristic ODE system is non-characteristic, i.e.\ the characteristics admit a strictly monotone distance to the set of prescribed initial data $\bar{Y}\times \{0\}$. As such, equation~(\ref{EQ:levelSet}) admits a~unique local-in-time $C^2$-solution by standard theory~\cite{Evans}. Furthermore, the parameterized trajectories associated to~$x$ induce a~smooth path of diffeomorphisms and the artificial parameter $s$ coincides with the actual physical time $t$. More precisely, we have the following statement:

\begin{Lemma}
\label{Lemma:LevelSetDiffeo}
Let a~normal velocity field $v_n\in C^{4,1}(\bar{Y},\mathbb{R})$ be given. Furthermore, let a~regular level-set function $\Phi_0\in C^4(\bar{Y},\mathbb{R})$ be given, such that \mbox{$\bar{\mathcal{P}}=\left\lbrace \Phi_0\leq 0 \right\rbrace$} with $Y\setminus \mathcal{P}$ being compact in $Y$. Then the local-in-time solution to the level-set equation~(\ref{EQ:levelSet}) with respect to initial conditions $\Phi_0$ induces a~$C^1$-path $h$ in $\text{Diff}_\Box^{2,1}(\bar{Y})$ such that \mbox{$h_t(\bar{\mathcal{P}}) = \left\lbrace \Phi(t,\cdot)\leq 0 \right\rbrace$} for all times $t$ sufficiently small.
\end{Lemma}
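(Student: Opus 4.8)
The plan is to construct the diffeomorphism path $h_t$ explicitly from the projected characteristics of the level-set equation, then verify each defining property of a $C^1$-curve in $\text{Diff}^{2,1}_\Box(\bar Y)$. First I would recall from the characteristic system~(\ref{EQ:characteristics})--(\ref{EQ:characteristicsIC}) that the $\vec{x}$-component splits as $\vec{x}(s) = (x(s), s)$, so that the spatial characteristics $x(s) = x(s;y)$ solve the autonomous-in-$s$ ODE $\dot x = v_n(x,s)\,p/|p|$ with $x(0)=y$, coupled to the $\vec p$-equation. Since $v_n\in C^{4,1}$ and $\Phi_0\in C^4$, the right-hand side of the combined $(\vec x,\vec p)$-system is $C^{3,1}$ in the state variables, so by the theorem on smooth dependence of ODE solutions on initial data, the time-$t$ flow map $h_t : y \mapsto x(t;y)$ is $C^{3}$ (in particular $C^{2,1}$) in $y$, jointly continuously differentiable in $t$, with $h_0 = \text{id}_{\bar Y}$; differentiating the ODE in $t$ shows $t\mapsto h_t$ is a $C^1$-curve in $C^{2,1}(\bar Y,\mathbb{R}^d)$. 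For $t$ small, $\nabla h_t$ is close to the identity, hence invertible with uniformly bounded inverse, so $h_t$ is a local diffeomorphism; injectivity on the compact set $\bar Y$ follows for small $t$ from the uniform closeness to the identity (or an Grönwall estimate on $|x(t;y_1)-x(t;y_2)|$), giving $h_t\in\text{Diff}^{2,1}(\bar Y)$.

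Next I would check the boundary-preserving property $h_t\in\text{Diff}^{2,1}_\Box(\bar Y)$, i.e.\ $h_t|_U = \text{id}_U$ on a neighborhood $U$ of $\partial Y$. Because $Y\setminus\mathcal{P}$ is compactly contained in $Y$ and $\Phi_0$ is a regular level-set function, there is a neighborhood of $\partial Y$ on which $\Phi_0$ is bounded away from zero; then the normal velocity $v_n(t,x,y) = -v_{\text{mod}}(y)f(c)$ (or, in the present partial-coupling reading, the prescribed $v_n$) can be taken to vanish identically there — this is exactly the standing assumption that $v_n$ vanishes in a neighborhood of the interface, together with the freedom to cut off $v_n$ away from where the zero-level-set ever reaches in small time. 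On that neighborhood the characteristic ODE reduces to $\dot x = 0$, so $x(t;y)=y$, i.e.\ $h_t = \text{id}$ there; the image $h_t(\bar Y)=\bar Y$ since the boundary is fixed and $h_t$ is a homeomorphism of the compact set $\bar Y$.

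Finally I would identify the evolved domain: since the function value $z(s)=\Phi(\vec x(s))$ is constant along characteristics by the second equation of~(\ref{EQ:characteristics}), we have $\Phi(t, x(t;y)) = \Phi_0(y)$, hence $\{\Phi(t,\cdot)\le 0\} = h_t(\{\Phi_0\le 0\}) = h_t(\bar{\mathcal{P}})$, which is the claimed identity. The sign conditions then persist for small $t$, keeping $Y\setminus h_t(\mathcal{P})$ compactly contained in $Y$ and the interface $C^{2,1}$-regular, consistent with the hypotheses needed downstream.

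I expect the main obstacle to be bookkeeping on the regularity count: one loses derivatives passing from $\Phi_0\in C^4$, $v_n\in C^{4,1}$ to the characteristic field and then to its flow, and one must confirm that what survives is exactly $C^{2,1}$ in $y$ and $C^1$ in $t$ — in particular that the Lipschitz ($\alpha=1$) part of the Hölder regularity is genuinely inherited by $\nabla^2 h_t$, which requires the extra Lipschitz derivative in the hypotheses on $v_n$ and $\Phi_0$. A secondary technical point is making the cut-off of $v_n$ near $\partial Y$ rigorous without disturbing the dynamics near the interface, but this is routine given the compact containment and the smallness of $t$.
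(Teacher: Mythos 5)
Your proposal follows essentially the same route as the paper: both construct $h_t$ as the flow of the projected characteristics of the level-set equation, smoothly cut off $v_n$ near $\partial Y$ and away from the interface so that the field $p/|p|$ stays regular and $h_t=\mathrm{id}$ near the boundary, obtain the $C^1$-in-$t$, $C^{2,1}$-in-$y$ regularity from ODE well-posedness (the paper phrases this as Picard--Lindelöf for the reduced $(x,p)$-system posed in the Banach space $(C^3(\bar Y))^d\times(C^3(\bar Y))^d$, while you invoke finite-dimensional smooth dependence on initial data --- an equivalent bookkeeping), and conclude $h_t(\bar{\mathcal{P}})=\{\Phi(t,\cdot)\le 0\}$ from the constancy of $\Phi$ along characteristics. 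One small slip in phrasing: the standing assumption is that $v_n$ vanishes near the critical set $\{|\nabla\Phi_0|=0\}$ and, after the cutoff, near $\partial Y$ --- not ``near the interface,'' where $|\nabla\Phi_0|=1$ by regularity of the level-set function, which is precisely what keeps $p/|p|$ well defined where the motion actually occurs.
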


\begin{proof}
In the following, we investigate the regularity of the trajectories associated to~$\vec{x}$ solving the systems of ODEs~(\ref{EQ:characteristics}). More precisely, we consider the smaller system in the spatial variables $x$ and $p$ which is closed due to the time-independence of $v_n$. As a~suitable Banach space for $(x,p)$, we introduce
\begin{align*}
    \mathcal{X} = (C^3(\bar{Y}, \mathbb{R}))^{d}  \times (C^3(\bar{Y}, \mathbb{R}))^{d}.
\end{align*}
Then the reduced initial conditions~(\ref{EQ:characteristicsIC}) are element of $\mathcal{X}$.
It is straightforward to check that the structure function $f:\mathcal{X} \supset V \to \mathcal{X}$ of the reduced ODE system
\begin{align*}
    f(x,p) = \left(v_n(x) \frac{p}{|p|} ,\;-\nabla v_n(x) |p| \right)
\end{align*}
is well-defined and locally Lipschitz continuous with respect to the norm $|.|_{\mathcal{X}}$ for a small neighborhood $V$ of $(x(0),p(0))$, as long as $v_n = 0$ in a~neighborhood of \mbox{$\{y\in Y :\; |\nabla \Phi_0(y)| = 0\}$}. Due to the tubular neighborhood theorem (Theorem~\ref{TheoremTubNeigh}), this can be achieved by modifying $v_n$ appropriately without changing the solution locally at $\partial^\text{int} \mathcal{P}$. More precisely, we force $v_n$ to zero away from a~neighborhood of $\partial^\text{int} \mathcal{P}$ and within a~neighborhood of $\partial Y$ in a~smooth manner. By Picard-Lindelöf theorem, there exists $T>0$ such that the ODE system admits a~unique solution $(x,p)\in C^1(0,T; \mathcal{X})$. Due to the convexity of the domain, we obtain $x\in C^1(0,T; C^{2,1}(\bar{Y},\mathbb{R}^d))$. By the choice of $v_n$, we have $\text{im}(x(s)) = \bar{Y}$ for all $s\in(0,T)$. Since the initial conditions fulfill $x(0)=\text{id}_{\bar{Y}}\in \text{Diff}^{2,1}(\bar{Y})$, the diffeomorphism property for $x(s)$ is obtained in a~possibly reduced time interval $(0,T)$. Consequently, $x\in C^1(0,T; \text{Diff}_\Box^{2,1}(\bar{Y}))$. Since the value of $\Phi$ is constant along the trajectories associated to~$x$, we finally see $h_t(\bar{\mathcal{P}}) = \left\lbrace \Phi(t,\cdot)\leq 0 \right\rbrace$.
\end{proof}

In order to underline the power of the above Lemma~\ref{Lemma:LevelSetDiffeo}, let us reconsider the case of contracting and expanding circles as in Example~\ref{Ex:DiffeoCircle}. Previously, a~suitable family of diffeomorphisms had to be constructed explicitly in~(\ref{EQ:DiffeoCircle}) to apply Theorem~\ref{THEOREM1b}. With the help of Lemma~\ref{Lemma:LevelSetDiffeo} it is now sufficient to check that the function
\begin{align*}
    \Phi_r(y) = r - ||y||_2
\end{align*}
has a~zero-level-set of a~circle of radius $r<0.5$ and can be smoothed locally around $0$ to fulfill $\Phi_{r,0} \in C^4(\bar{Y})$. The statement follows by applying Lemma~\ref{Lemma:LevelSetDiffeo} to $\Phi_{r,0}$ and $v_n \equiv 1$. 

\subsection{Existence for full coupling}
\label{SEC:DiffExist2Sided}

In geoscientific applications of our model, the evolution of the microscopic geometry of the porous medium is not known in advance but depends on the time-dependent macroscopic concentration field. Accordingly, this chapter is dedicated to the local-in-time existence of strong solutions to a~model where the geometry evolution is described by the level-set equation~(\ref{EQ:levelSet}).
For clarity, we restate the model equations:
\begin{align*}
    \phi \partial_t c - \nabla\cdot(\mathbb{D}\nabla c) &=   \sigma f(c) - \partial_t \phi c &&\quad \text{in } (0,T)\times \Omega, \\
    \frac{\partial \Phi}{\partial t} + v_n |\nabla_y \Phi| &= 0, &&\quad \text{in }  (0, T)\times \Omega \times Y,  \nonumber
\end{align*}
supplemented by boundary and initial conditions as well as diffusion cell-problem~(\ref{Def: DiffusionTensor}), where the level-set normal interface velocity is given by
\begin{align}
\label{Def:VnormalSimple}
    v_n(t,x) = f(c(t,x)),
\end{align}
i.e.\ depending on the solution of the macroscopic concentration. In simplification of~(\ref{DEF:NormalVel}), we consider a~uniform velocity within each unit-cell to facilitate the establishment of higher regularity for quantities derived from the geometry. Furthermore, we restrict to scenarios where the porosity can be used as the natural order parameter uniquely characterizing the geometry state which is the typical case in dissolution/precipitation processes. Moreover, we assume linear reaction rates. By doing so, we retract to the setting investigated in~\cite{Schulz2017} enabling us to use respective results stated therein. Additionally, we again consider a~single evolving master unit-cell $Y^*$ as in Section~\ref{SEC:DiffExist1Sided} and introduce spatial inhomogeneity in $\Omega$ by choosing different evolution states of $Y^*$ as the initial condition.

In the following, we approach existence of solutions $(c,\Phi)\in (\mathcal{X}_1, \Omega_x\times C^1((0,T)\times Y))$ to the fully coupled model described above, where the solution space with respect to $\Phi$ is defined as
\begin{align*}
    \Omega_x\times C^1((0,T)\times Y) = \left\lbrace \Phi: (0, T)\times \Omega \times Y \to \mathbb{R}: \; \forall x\in\Omega \quad \Phi(\cdot,x,\cdot) \in C^1((0,T)\times Y)  \right\rbrace.
\end{align*}
To do so, we rewrite the given problem in the variables $(c,\phi)\in\mathcal{X}_1^2$. As such, we can apply Theorem~\ref{TheoremSchulz} to obtain local-in-time existence in the variables $(c,\phi)$. In order to do so, the effective parameters $\ID,\sigma$ must be expressed as functions of $\phi$. As we will see, this relation is independent of the concentration solution $c$ and is therefore determined by the level-set initial condition alone. Given the solution $(c,\phi)$ of the transformed system we construct the solution $(c,\Phi)$ to our model of consideration.

Let us consider the geometry evolution being preliminarily parameterized by the time $t$ of the level-set equation. Using the properties of (\ref{Def:VnormalSimple}), we can reparametrize all effective parameters as a~function of $\phi$ by setting:
\begin{align}
\label{EQ:TrafoWRTphi}
    \hat{\sigma} = {\sigma} \circ \phi^{-1}, \quad  \hat{\ID} = {\ID} \circ \phi^{-1}.
\end{align}
The main difficulty in proving existence for the bilaterally coupled system is establishing sufficiently high regularity in the coefficients $\hat{\sigma}, \hat{\ID}$ as required to apply Theorem~\ref{TheoremSchulz}.

First, we notice that the mapping $\phi \mapsto \hat{\sigma}, \hat{\ID}$ is independent of the particular choice of a~normal velocity $v_n$ in (\ref{Def:VnormalSimple}) in the set of y-independent functions which can be seen as follows: Let $\Phi_1$ solve~(\ref{EQ:levelSet}) with $v_n \equiv 1$. Then $\Phi(t,x) = \Phi_1(V(t),x)$
solves~(\ref{EQ:levelSet}) with $v_n =v$ and $V$ being the primitive of $v$. As such, solutions are solely rescaled with respect to the time variable but not with respect to space, i.e.\ the individual geometries relating to $\Phi_1(s,\cdot)$ remain unaffected. It is therefore sufficient to consider the case $v_n\equiv 1$, where, due to
\begin{align}
\label{EQ:PhiSigma}
    \partial_s \phi(s) =  -\sigma(s),
\end{align}
$\phi$ is strictly monotonically increasing. This fact justifies the transformations introduced in~(\ref{EQ:TrafoWRTphi}). As such, the regularity of $\phi \mapsto \hat{\sigma}, \hat{\ID}$ follows immediately from the regularity of $\phi, \sigma, \ID$ which we investigate using the same means as in Theorem~\ref{Theorem:DiffOneSideExist}.

Let $\varphi:U\subset \mathbb{R}^{d-1}\to \mathbb{R}^d$ be a~suitable local parameterization of $\partial^\text{int}\mathcal{P}$. Then, $\hat{\varphi}: (-S,S)\times U \to \mathbb{R}^d$ defined as
\begin{align}
\label{EQ:NormalEvo}
    \hat{\varphi}(s,x) := \varphi(x) + s\nu(\varphi(x))
\end{align}
is a~local parameterization of the deformed interface at times $s\in (-S,S)$ using the results of Section~\ref{SEC:FlowInducedDiffeomorphisms}. Note that $\hat{\varphi}(s,\cdot)$ is a~bijection for $s$ close to zero and in case of $\Phi_0 \in C^4(Y)$, the map~(\ref{EQ:NormalEvo}) is of class $C^3$, cf.~Theorem~\ref{TheoremTubNeigh}. Using the regularity of $\hat{\varphi}$ we infer the mapping
\begin{align*}
    s \mapsto  \int\limits_{U} \sqrt{\text{det}\left(\nabla \hat{\varphi} (\nabla \hat{\varphi} )^T \right) }  \; dx
\end{align*}
to be of class $C^2$, which translates to $\sigma(s)$ by using a~partition of unity, cf.~\cite{RiemannManifolds}. By~(\ref{EQ:PhiSigma}) we also obtain $\phi \in C^3((-S,S))$. As such, we finally conclude $\hat{\ID} \in C^1((\phi_\text{min},\phi_\text{max})) $, $\hat{\sigma} \in C^2((\phi_\text{min},\phi_\text{max})) $ for sufficiently narrow bounds $\phi_\text{min}<\phi^0<\phi_\text{max}$ around the initial condition. Summing up the observations of this section, we obtain the following existence result using Theorem~\ref{TheoremSchulz}.

\begin{Theorem} \textit{Existence of strong solutions, full coupling, diffusive transport}\\
Let $\Omega\subset\mathbb{R}^d,\; d\in\{2,3\}$, be a~$C^2$-domain, $r>d+2$ with initial conditions $c_0\in W^{2-\frac{2}{r},r}(\Omega)$, $c_0\geq 0$, fulfilling $c_0=0$ on $\partial\Omega$. Furthermore, consider an~initial inclusion compactly contained in $Y^*$ given by a~regular $C^4(\bar{Y}^*)$ level-set function evolving according to the level-set equation~(\ref{EQ:levelSet}) such that solutions for $v_n \equiv \pm1$ cover $\phi \in (\phi_\text{min},\phi_\text{max}) \subset (0,1)$. Let $\phi^0\in W^{2,r}(\Omega)$, $\phi^0(x)\in (\phi_\text{min}+\epsilon,\phi_\text{max}-\epsilon)$ for some $\epsilon>0$, $x\in \Omega$ and the reaction rate given as $f(c)=c$. Then there exists a~local-in-time solution $(c,\Phi)\in (\mathcal{X}_1, \Omega_x\times C^1((0,T)\times Y))$ to the bilaterally coupled model.
\label{Diff:FullExist}
\end{Theorem}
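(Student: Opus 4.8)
The plan is to reduce the fully coupled $(c,\Phi)$-problem to the abstract fixed-point setting of Theorem~\ref{TheoremSchulz} by a change of unknowns, so the main work is purely to verify the regularity hypotheses on the effective coefficients. First I would record that, by the reparametrization argument already established in this section, the geometry evolution driven by $v_n=f(c)=c$ is — up to a time rescaling by the primitive $V$ of $c$ — the normal-velocity evolution with $v_n\equiv 1$; hence the state of each master cell is encoded faithfully by the scalar porosity $\phi$, and the maps $\hat\sigma=\sigma\circ\phi^{-1}$, $\hat{\mathbb D}=\mathbb D\circ\phi^{-1}$ are well defined on $(\phi_{\text{min}},\phi_{\text{max}})$ with the regularity $\hat{\mathbb D}\in C^1$, $\hat\sigma\in C^2$ shown above (invoking Theorem~\ref{THEOREM1b}/Corollary~\ref{Kor: DiffDiffbar} for $\mathbb D$ along the normal-flow path, (\ref{EQ:NormalEvo}) and a partition of unity for $\sigma$, and (\ref{EQ:PhiSigma}) for $\phi$). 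I would also note the non-degeneracy (\ref{EQ:BoundsExist1})-type bounds $\delta<\phi<1-\delta$, $\sigma>\delta$, $\mathbb D>\delta\mathds 1_d$ hold on a slightly shrunk interval around the initial data, by continuity and the assumption $\phi^0(x)\in(\phi_{\text{min}}+\epsilon,\phi_{\text{max}}-\epsilon)$.

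Next I would rewrite the coupled system in the variables $(c,\phi)\in\mathcal X_1^2$. The transport equation becomes $\phi\partial_t c-\nabla\!\cdot(\hat{\mathbb D}(\phi)\nabla c)=\hat\sigma(\phi)c-(\partial_t\phi)c$, exactly as in (\ref{EQ:transportDIFF}) with $s$ replaced by $\phi$; the level-set equation, evaluated on the $y$-independent velocity $v_n=c$, reduces at the level of the order parameter to the pointwise ODE $\partial_t\phi(t,x)=-\hat\sigma(\phi(t,x))\,c(t,x)$ with datum $\phi(0,x)=\phi^0(x)$ (this is (\ref{EQ:PhiSigma}) rescaled by the velocity $c$, using the reparametrization observation). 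This is precisely the structure of the system treated in \cite{Schulz2017}: a parabolic equation for $c$ coupled to a transport/ODE equation for the order parameter, with $C^1$ diffusivity and $C^2$ surface density as functions of that parameter, linear reaction, homogeneous Dirichlet data $c_0\ge 0$ with $c_0=0$ on $\partial\Omega$, and $\phi^0\in W^{2,r}(\Omega)$. Hence Theorem~\ref{TheoremSchulz} applies verbatim and yields a time $T>0$ and a solution $(c,\phi)\in\mathcal X_1^2$ with $\phi$ staying inside $(\phi_{\text{min}},\phi_{\text{max}})$ on $(0,T)$ (after possibly shrinking $T$, using the continuity of $\phi$ in $\mathcal X_1\hookrightarrow C(\overline{\Omega_T})$ and the $\epsilon$-margin).

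Finally I would reconstruct the genuine level-set solution $\Phi$ from $(c,\phi)$. For each fixed $x\in\Omega$, the initial cell is the $\phi^0(x)$-state of the master cell, whose interior boundary is a compactly contained $C^{2,1}$ (indeed the pushforward under the normal flow of a $C^4$ regular level-set function) inclusion; I feed the scalar, $y$-independent velocity $v_n(t,x)=c(t,x)$ and the master regular level-set function into Lemma~\ref{Lemma:LevelSetDiffeo} (whose hypotheses $v_n\in C^{4,1}$, $\Phi_0\in C^4$ are met after the harmless modification of $v_n$ near $\partial^{\text{int}}\mathcal P$ and $\partial Y$ described there, and whose conclusion is unaffected by the time-rescaling $t\mapsto V(t)=\int_0^t c(\tau,x)\,d\tau$), obtaining a $C^1$ path $t\mapsto h_t(x)$ of $\text{Diff}_\Box^{2,1}(\bar Y)$-embeddings with $h_t(x)(\bar{\mathcal P})=\{\Phi(t,x,\cdot)\le 0\}$ and, by the consistency of the porosity along the flow with (\ref{EQ:PhiSigma}), volume equal to $\phi(t,x)$. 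Setting $\Phi(t,x,\cdot):=$ the transported master level-set function along this flow gives $\Phi\in\Omega_x\times C^1((0,T)\times Y)$, and by construction $(c,\Phi)$ solves the original bilaterally coupled model with $v_n=f(c)=c$. The main obstacle, already resolved by the preparatory work of this section, is the regularity transfer $\phi\mapsto(\hat{\mathbb D},\hat\sigma)$: the parabolic theory underlying Theorem~\ref{TheoremSchulz} needs $\hat{\mathbb D}\in C^1$ and $\hat\sigma\in C^2$ in the order parameter, which is exactly what the smooth-dependence machinery of Section~\ref{SEC:diff} together with the normal-flow parametrization (\ref{EQ:NormalEvo}) and the monotonicity (\ref{EQ:PhiSigma}) delivers; everything else is a routine verification that the transformed system matches the template of \cite{Schulz2017}.
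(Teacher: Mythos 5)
Your proposal follows essentially the same route as the paper's proof: reparametrize $\hat\sigma,\hat{\mathbb D}$ by $\phi$ via the monotonicity $\partial_s\phi=-\sigma$ and the velocity-independence/time-rescaling observation, reduce the coupled system to the $(c,\phi)$ template of Theorem~\ref{TheoremSchulz}, and then reconstruct $\Phi(t,x,y)=\Phi_1(C(t,x),x,y)$ from the $v_n\equiv 1$ level-set solution with $C$ a primitive of $c$. The argument is correct; the only cosmetic discrepancy is the sign of the linear reaction term when matching the transformed transport equation to the template, a convention issue already present in the paper itself.
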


\begin{proof}
In the setup presented above, Theorem~\ref{TheoremSchulz} ensures the short-time existence of solutions \mbox{$(c,\phi)\in(\mathcal{X}_1)^2$} to the equations
\begin{align*}
    \phi \partial_t c -\nabla \cdot (\hat{\ID}(\phi)\nabla c) &= \hat{\sigma}(\phi) c^2 - \hat{\sigma}(\phi) c,\\ \nonumber
    \partial_t\phi = -\hat{\sigma}(\phi) c,
\end{align*}
with $\hat{\ID},\; \hat{\sigma}$ as constructed in (\ref{EQ:TrafoWRTphi}). Apparently, the second equation corresponds to the level-set function $\Phi$ fulfilling
\begin{align}
\label{local2.5}
    \partial_t \Phi(t,x,y) +c(t,x)|\nabla_y \Phi(t,x,y)| = 0, \quad \forall x\in\Omega,
\end{align}
for $y$ sufficiently close to the zero level-set as required. By the embedding theorems~(\ref{CompactEmbedX1}), $t\mapsto c(t,x)$ is continuous for all $x\in\Omega$. Furthermore, a~solution $\Phi_1$ related to $v_n \equiv 1$ in a~neighborhood of the zero level-set exists and is of class $C^2$ as discussed in Section~\ref{SEC:FlowInducedDiffeomorphisms}. As such, the transformed solution $\Phi(t,x,y) = \Phi_1(C(t,x),x,y)$ with $C(\cdot,x)$ being a~primitive of $c(\cdot,x)$ solving~(\ref{local2.5}) is of class $C^1((0,T)\times Y)$ for each $x\in\Omega$.
\end{proof}

\begin{Bem}
The range of porosities  $(\phi_\text{min},\phi_\text{max})$ the solutions in the above theorem attain depends on the radius $r$ of the tubular neighborhood of the master unit-cell's fluid-solid interface at $s=0$, cf.~Theorem~\ref{TheoremTubNeigh}. More precisely, $\phi_\text{max}-\phi_\text{min}$ is bounded by the volume of the tubular neighborhood. In the case of circular geometries as illustrated in Example~\ref{Ex:DiffeoCircle}, we can allow for $(\phi_\text{min},\phi_\text{max})\subset\subset (1-\frac{\pi}{4},1)$, i.e.\ the full range between complete dissolution and clogging. Moreover, under strong assumptions on the smoothness of parameters and additional compatibility conditions of the initial data, Corollary~1 of~\cite{Schulz2017} proves the maximal existence interval of solutions $(c,\phi)$ to end exactly when $\phi$ leaves its admissible range. Given the required regularity, the analogous result holds for $(c,\Phi)$ as the solution of the level-set equation covers the range $(\phi_\text{min},\phi_\text{max})$ by assumption. 
\end{Bem}

\begin{Bem}
Note that a proper level-set function uniquely determines the geometry state of the system but not vice versa. In that sense, the problem of Theorem~\ref{Diff:FullExist} features multiple solutions $(c,\Phi)$. However, all these solutions are equivalent as they describe the same geometry evolution. By Theorem 4.2 in~\cite{Schulz2017}, the solution $(c,\phi)\in\mathcal{X}_1^2$ is unique. By the one-on-one relation between geometry state and porosity, this translates to the uniqueness of the solid-part within each unit-cell $Y(t,x)$. Fixing an initial value $\Phi_0$, also the solution $(c,\Phi)$ is uniquely determined. 
\end{Bem}

\section{Smooth parameter dependence and existence for diffusive-advective transport}
\label{SEC:AdvTransport}
In this section, we consider an extension of the model analyzed in Section~\ref{SEC:DiffTransport} involving fluid flow and including advective solute transport. To do so, we follow a~similar strategy as in Section~\ref{SEC:DiffTransport}. After fixing the setting in Section~\ref{SEC:SettingAdvect}, we prove smooth dependence of the permeability tensor on the geometry using a~variant of the implicit function theorem in Section~\ref{SEC:perm}. Building upon those results the continuous dependence of Darcy velocity and pressure on the permeability field is shown. Finally, we obtain local-in-time existence results for the diffusive-advective transport case with partial one-way micro-to-macro coupling in Section~\ref{SEC:AdvectExistence}.

\subsection{Setting}
\label{SEC:SettingAdvect}
In the following, we consider solute transport by diffusion and advection:
\begin{align}
\label{EQ:transportAdvect}
    \phi \partial_t c +\nabla\cdot (vc)- \nabla\cdot(\mathbb{D}\nabla c) =  \sigma f(c) - \partial_t \phi c\quad \text{in } (0,T)\times \Omega
\end{align}
which is coupled to Darcy's equation
\begin{align}
\label{EQ:DarcyAdvect}
   v &= -\mathbb{K} \nabla p &&\text{in } \Omega,\; t\in(0,T) ,\\
\nabla \cdot v &=\tilde{f} && \text{in } \Omega,\; t\in(0,T).   \nonumber 
\end{align}
In comparison to the equations stated in Section~\ref{SEC:Model}, we allow for a general constant-in-time divergence $\tilde{f}$. In contrast to the previous analysis, we now consider a~partial coupling from the microscopic to the macroscopic scale as in Section~\ref{SEC:DiffExist1Sided}. That is, we suppose the evolution of the underlying geometry is a-priori given by a~sufficiently smooth path of diffeomorphisms. Finally,  the model regarded in this chapter is closed by the two effective tensors and associated cell problems~(\ref{Def: DiffusionTensor}),~(\ref{Def: PermTensor}).

\subsection{Continuous dependence of permeability tensors}
\label{SEC:perm}
Using a~similar strategy as in Section~\ref{SEC:diff}, we also show the continuous dependence of the permeability tensor on the geometry evolution. 

In order to capture the underlying stationary Stokes equations in~(\ref{Def: PermTensor}), we introduce the following function spaces for the velocity field $v$ and pressure field $q$ for $k\geq 1$, cf.~\cite{Galdi}:
\begin{align*}
    H^k_v(\mathcal{P}) &= \left\lbrace v \in (H_\#^k(\mathcal{P}))^d:\; \text{Tr}_\mathcal{P}(v) = 0 \right\rbrace,\\
    H^{k-1}_q(\mathcal{P}) &= \left\lbrace q \in H_\#^{k-1}(\mathcal{P}):\;\int_\mathcal{P} q \; dy = 0 \right\rbrace, \\
    H^k_{v,\Box}(\mathcal{P}) &= \left\lbrace v\in  H^k_v(\mathcal{P}):\;  \nabla \cdot v = 0 \text{ on } U \right\rbrace
\end{align*}
for some fixed neighborhood $\partial Y \subset U \subset \bar{Y}$, cf.~(\ref{EQ:DiffBoundaryPreserving}). In order to formulate the notion of weak solutions stated in~\cite{Galdi}, we furthermore introduce the space 
\begin{align*}
H^1_{v,\sigma}(\mathcal{P}):=\left\lbrace u\in H^1_{v}(\mathcal{P}): \;\nabla \cdot u =0 \right\rbrace     
\end{align*}
of solenoidal functions in $H^1_{v}(\mathcal{P})$. As in Section~\ref{SEC:diff}, we are required to consider the general inhomogeneous class of Stokes equations.
Therefore, the weak form of the Stokes problem with general force term $f\in (L^2(\mathcal{P}))^d$ and inhomogeneity $j\in H^1(\mathcal{P})\cap L^2_0(\mathcal{P})$ with $j=0$ in a~neighborhood of $\partial Y$ reads: Find $(u,p)\in H^1_v(\mathcal{P}) \times H^0_q(\mathcal{P})$ such that
\begin{align}
\label{EQ: PermWeakForm}
    \int_\mathcal{P} \nabla u : \nabla v \; dy  &= \int_\mathcal{P} f v \; dy, \nonumber \\
    \int_\mathcal{P} \nabla u : \nabla \Psi \; dy - \int_\mathcal{P} p  \nabla \cdot \Psi \; dy &= \int_\mathcal{P} f \Psi \; dy,\\
    \nabla \cdot u &= j, \nonumber
\end{align}
for all $(v,\Psi)\in (H^1_{v,\sigma}(\mathcal{P}) \times C^\infty_0(\mathcal{P}))$.

Following the procedure of Section~\ref{SEC:diff}, we next implement higher regularity of solutions to the Stokes problem given a~sufficiently smooth interior boundary $\partial^\text{int}\mathcal{P}$.

\begin{Lemma} \textit{Regularity Stokes} \\
Let $\partial^\text{int}\mathcal{P}$ be of class $C^2$, $f\in (L^2(\mathcal{P}))^d$ and $j\in H^1(\mathcal{P})\cap L^2_0(\mathcal{P})$ vanishing in a~neighborhood of $\partial Y$. Then there exists a~unique solution $(u,p)$ to (\ref{EQ: PermWeakForm}) in  $H^2_v(\mathcal{P}) \times H^1_q(\mathcal{P})$. Furthermore, the associated solution operator is linear and bounded.
\label{Lemma: Regularity Stokes}
\end{Lemma}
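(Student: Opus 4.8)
The plan is to reduce the statement about the periodic Stokes problem on the perforated cell $\mathcal P$ to the classical regularity theory for the stationary Stokes system on bounded domains with mixed Dirichlet/periodicity boundary conditions, by a localization/reflection argument analogous to the one used for the elliptic cell problem in Lemma~\ref{Lemma: H2regular}. First I would establish existence and uniqueness of a weak solution $(u,p)\in H^1_v(\mathcal P)\times H^0_q(\mathcal P)$: reduce to homogeneous divergence constraint by subtracting a suitable lifting of the inhomogeneity $j$ (using the Bogovskii operator on $\mathcal P$, which is available since $j$ has zero mean and vanishes near $\partial Y$, so periodicity is preserved), then apply the Lax--Milgram theorem on the solenoidal space $H^1_{v,\sigma}(\mathcal P)$ to get $u$, and recover the pressure $p\in L^2_0(\mathcal P)$ via the inf-sup (LBB) condition for the pair $H^1_v(\mathcal P)\times H^0_q(\mathcal P)$; the LBB inequality holds here because $\partial^\text{int}\mathcal P$ is $C^2$ and $\partial Y$ only carries a periodicity condition rather than a genuine boundary. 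Uniqueness and the bound on the solution operator then follow directly from coercivity plus inf-sup.

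Next I would bootstrap to $H^2\times H^1$ regularity. The geometry of $\mathcal P$ splits into two kinds of boundary behaviour: a neighborhood of the smooth interior interface $\partial^\text{int}\mathcal P$, and a neighborhood of the exterior boundary $\partial Y$ where only periodicity is imposed. Near $\partial Y$ one periodically extends $u$, $p$, $f$, $j$ across $\partial Y$ (this is legitimate because $j$ vanishes there, so the extended divergence is still in $H^1$ and no spurious distributional mass appears on $\partial Y$), reducing to interior Stokes regularity on an open neighborhood in $\mathbb R^d$; the standard result (e.g.\ Galdi, or Cattabriga) gives $u\in H^2_\text{loc}$, $p\in H^1_\text{loc}$ there. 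Near $\partial^\text{int}\mathcal P$, the $C^2$ regularity of the interface plus the homogeneous Dirichlet condition $u=0$ on it puts us exactly in the setting of boundary Stokes regularity, again yielding $u\in H^2$, $p\in H^1$ in a boundary collar after flattening the interface. A partition of unity / open covering argument over $\bar{\mathcal P}$ patches these local estimates together, and since all the local estimates are linear with constants depending only on $\mathcal P$, $f$, $j$, the global solution operator $(f,j)\mapsto (u,p)$ is linear and bounded from $(L^2(\mathcal P))^d\times(H^1(\mathcal P)\cap L^2_0(\mathcal P))$ to $H^2_v(\mathcal P)\times H^1_q(\mathcal P)$.

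One technical point to handle carefully is that $H^2_v(\mathcal P)$ and $H^1_q(\mathcal P)$ carry the periodicity built into $H^k_\#$; one must check that the localized regularity improvements are compatible with periodicity, i.e.\ that summing local pieces does not destroy the $Y$-periodic structure. This is exactly why the cutoff functions near $\partial Y$ are taken periodic and why $j$ is assumed to vanish near $\partial Y$, mirroring the device used in the proof of Lemma~\ref{Lemma: H2regular}; I would invoke that lemma's strategy verbatim for the periodic part.

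The main obstacle I anticipate is the pressure: establishing the inf-sup / LBB condition for the periodic pair $H^1_v(\mathcal P)\times H^0_q(\mathcal P)$ on the perforated cell, and then propagating $H^1$-regularity of $p$ up to both the interior interface and (through the periodic extension) across $\partial Y$. For the velocity alone the argument is a routine adaptation of textbook Stokes regularity, but the pressure's boundary regularity and the verification that the Bogovskii-type right inverse of the divergence is bounded $H^1\cap L^2_0 \to H^2_v$ on this particular domain with this particular mix of boundary conditions is where the real work lies. I would quote Galdi's monograph for the precise form of these estimates and check only that the hypotheses ($C^2$ interface, compact inclusion, periodicity on $\partial Y$) are met.
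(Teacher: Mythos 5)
Your proposal follows essentially the same route as the paper: a Bogovskii-type lifting of the divergence inhomogeneity $j$ (the paper applies it after, you before, the homogeneous solve), Lax--Milgram on the solenoidal space for $u$, pressure recovery via the de Rham/inf-sup argument (Galdi, Lemma~IV~1.1), and then interior plus boundary Stokes regularity from Galdi's monograph, with the periodicity at $\partial Y$ handled by extension exactly as you describe. The paper's proof is just a terser version of yours, citing Theorems~IV~4.1 and~IV~5.1 of Galdi for the regularity and Bogovskii's theorem for $\beta\in H^2_0(\mathcal{P})$ with $\nabla\cdot\beta=j$, so no substantive difference remains.
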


\begin{proof}
We start considering the homogeneous case $j\equiv 0$. By standard Hilbert space arguments there exists a~uniquely determined  function $u\in H^1_{v,\sigma}(\mathcal{P})$
such that
\begin{align*}
     \int_\mathcal{P} \nabla u : \nabla v \; dy  = \int_\mathcal{P} f v \; dy, \quad \forall v \in H^1_{v,\sigma}(\mathcal{P}).
\end{align*}
Due to the smoothness of the domain and source term, there exists a~unique pressure field $p\in L^2(\mathcal{P})$ with vanishing average such that
\begin{align*}
     \int_\mathcal{P} \nabla u : \nabla \Psi \; dy - \int_\mathcal{P} p  \nabla \cdot \Psi \; dy &= \int_\mathcal{P} f \Psi \; dy, \quad \forall \Psi \in (C_0^\infty(\mathcal{P}))^2
\end{align*}
by Lemma IV 1.1 in~\cite{Galdi}. Furthermore, Theorem IV 4.1 in~\cite{Galdi} ensures the interior regularity claimed. As the boundary is of class $C^2$, the regularity can be extended to the full domain by \mbox{Theorem IV 5.1 in \cite{Galdi}}. Next, we consider the problem for general inhomogeneities $j$ of the specified class. According to Theorem 3.4 in~\cite{Bogovski} there exists a~function $\beta \in H^2_0(\mathcal{P})$ satisfying $\nabla \cdot \beta = j$. Solving the homogeneous system for $\tilde{f}=f-\Delta \beta$ and adding $\beta$ to the velocity solution, the full statement is shown. Combining the estimates associated to the previous steps, we conclude boundedness of the solution operator. 
\end{proof}

In comparison to the case of elliptic equations in Section~\ref{SEC:diff} we need to slightly change the setting here in order to accommodate for the additional condition on $\nabla \cdot u$ which is not invariant under pullbacks and cannot be additively compensated for without changing the solution itself. More precisely, we switch to a~setting that does not require surjectivity of operators.
Serving the analogous purpose as $F$ defined in equation (\ref{EQ:F}), let us consider the following mappings:
\begin{align*}
    G & :  H^2_{v,\Box}(\mathcal{P}) \times H^1_{q}(\mathcal{P}) \times (-S,S)  \to L^2(\mathcal{P}) \times H^1_{\#}(\mathcal{P}) ,\\
    & (u,p,s) \mapsto \left(h_s^*\Delta h_s^{*-1} u- h_s^*\nabla h_s^{*-1} p ,\;  h_s^*\nabla \cdot h_s^{*-1} u \right),\\
    \tilde{G} & : (-S,S) \to L^2(\mathcal{P}) \times H^1_{\#}(\mathcal{P}), \\
    & (s) \mapsto (h_s^*e_1 ,\; 0),
\end{align*}
with $h$ being a~$C^1$-path of diffeomorphisms in $\text{Diff}^2_{\Box}(\bar{Y})$ and $h_0=\text{id}_{\bar{Y}}$.
As the interior boundary conditions are invariant under pullbacks and already implemented in the underlying function spaces it is sufficient to only map to the function spaces associated to the bulk data $j,\; f$. 
Differentiability of this mapping is established using the same reasoning as above.
Summing up our considerations, we obtain the following theorem. 

\begin{Theorem} \textit{Smooth dependence of $\IK$}\\
Assume a~$C^{2}$ open set $Y\setminus\bar{\mathcal{P}} \subset Y$ being compactly contained in $Y$ and $(u,p)\in H^2_{v,\Box}(\mathcal{P}) \times H^1_q(\mathcal{P})$ such that $G(u,p,0)= \tilde{G}(0)$, i.e.\ $(u,p)$ is a~solution to problem (\ref{EQ: PermWeakForm}) with force term $f=e_1$ and heterogeneity $j \equiv 0$. Furthermore, let $h$ be an~$m$-times differentiable path in $\text{Diff}_{\Box}^2(\bar{Y})$ with $h_0=\text{id}_{\bar{Y}}$. Then there exist a~neighborhood $V$ of zero and a~function $g:V\to H^2_v(\mathcal{P}) \times H^1_{q}(\mathcal{P})$ $m$-times differentiable in $0\in V$, $g(0)=(u,p)$, such that
\begin{align*}
 G(g(s),s)= \tilde{G}(s), \quad \forall s\in V,    
\end{align*}
i.e.\ $h_s^{*-1}g(s)$ solves (\ref{EQ: PermWeakForm}) on $h(\mathcal{P})$ for $s\in V$.
\label{THEOREM2}
\end{Theorem}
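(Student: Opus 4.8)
The plan is to apply the implicit function theorem for Banach spaces (as in~\cite{Valent}) to the equation $G(u,p,s) - \tilde{G}(s) = 0$, mirroring the structure used for $F$ in Theorems~\ref{THEOREM1} and~\ref{THEOREM1b}. Concretely, define $H(u,p,s) := G(u,p,s) - \tilde{G}(s)$ as a map $H^2_{v,\Box}(\mathcal{P}) \times H^1_q(\mathcal{P}) \times (-S,S) \to L^2(\mathcal{P}) \times H^1_\#(\mathcal{P})$. By hypothesis $H(u,p,0)=0$. The three things to verify are: (i) $H$ is continuous and, in the $s$-variable, $m$-times differentiable at $0$, while being $C^1$ jointly in a neighborhood of $(u,p,0)$; (ii) the partial Fréchet derivative $H'_{(u,p)}(u,p,0)$ is a bounded linear isomorphism onto $L^2(\mathcal{P}) \times H^1_\#(\mathcal{P})$; (iii) conclude via the IFT that there is a neighborhood $V$ of $0$ and a map $g:V \to H^2_v(\mathcal{P}) \times H^1_q(\mathcal{P})$, $m$-times differentiable at $0$, with $g(0)=(u,p)$ and $H(g(s),s)=0$, which is exactly $G(g(s),s)=\tilde{G}(s)$.

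For step (i), the regularity in $s$ is inherited from the $C^m$-dependence of $h_s$ (and hence of $\nabla h_s$, $(\nabla h_s)^{-1}$, $\det \nabla h_s$) on $s$, exactly as in the computation~(\ref{EQ: ContinuityF1}): writing out $h_s^*\Delta h_s^{*-1}$, $h_s^*\nabla h_s^{*-1}$, and $h_s^*(\nabla\cdot)h_s^{*-1}$ via the chain rule produces coefficients that are polynomial expressions in the entries of $\nabla h_s^{-1}$ and $(\det\nabla h_s)^{-1}$, all of which depend smoothly (indeed $C^m$) on $s$ near $0$ and are uniformly bounded away from degeneracy since $h_0=\text{id}$; linearity in $(u,p)$ then gives joint continuity and the stated differentiability. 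Since $h_0=\text{id}_{\bar Y}$ one has $G(u,p,0)=(\Delta u - \nabla p,\ \nabla\cdot u)$ and $\tilde G(0)=(e_1,0)$, so $H(u,p,0)=0$ really is the weak Stokes problem~(\ref{EQ: PermWeakForm}) with $f=e_1$, $j\equiv 0$.

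Step (ii) is the crux. The linearization is $H'_{(u,p)}(u,p,0)(w,q) = (\Delta w - \nabla q,\ \nabla\cdot w)$ on $H^2_{v,\Box}(\mathcal{P}) \times H^1_q(\mathcal{P})$. Bijectivity onto $L^2(\mathcal{P}) \times H^1_\#(\mathcal{P})$ is precisely the unique solvability statement of Lemma~\ref{Lemma: Regularity Stokes}: given $(f,j)$ in the target space, the lemma produces a unique $(w,q) \in H^2_v(\mathcal{P}) \times H^1_q(\mathcal{P})$ solving the inhomogeneous Stokes system, and one checks that $w$ automatically lands in the subspace $H^2_{v,\Box}(\mathcal{P})$ because $j$ (hence $\nabla\cdot w = j$) vanishes on the neighborhood $U$ of $\partial Y$. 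Here I would be careful to note that the domain space $H^2_{v,\Box}(\mathcal{P}) \times H^1_q(\mathcal{P})$ is tailored so that the operator is \emph{surjective} — this is exactly the point flagged in the paragraph preceding the theorem, that $\nabla\cdot u$ is not pullback-invariant nor additively correctable, so one cannot follow the $F$-construction verbatim and instead restricts to $\Box$-type spaces where the divergence constraint is frozen near $\partial Y$. Boundedness of the inverse is the boundedness of the Stokes solution operator, also asserted in Lemma~\ref{Lemma: Regularity Stokes}. Since the set of bounded linear isomorphisms is open and $H'_{(u,p)}$ depends continuously on the base point, the isomorphism property persists on a neighborhood of $(u,p,0)$, which is all the IFT needs.

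The main obstacle I anticipate is the bookkeeping in step (ii) around the function spaces: one must confirm that the solution operator of Lemma~\ref{Lemma: Regularity Stokes}, a priori landing in $H^2_v(\mathcal{P}) \times H^1_q(\mathcal{P})$, actually maps \emph{onto} the smaller space $H^2_{v,\Box}(\mathcal{P}) \times H^1_q(\mathcal{P})$ when the target data $(f,j)$ has $j$ supported away from $\partial Y$ — equivalently, that the ``$\nabla\cdot v=0$ on $U$'' constraint is compatible with solvability. Once this compatibility is pinned down, everything else is the routine chain-rule regularity argument already rehearsed for $F$, and the conclusion ``$h_s^{*-1}g(s)$ solves~(\ref{EQ: PermWeakForm}) on $h_s(\mathcal{P})$'' follows from the defining pullback identities for $G$ and $\tilde G$ exactly as in the characterization property preceding Theorem~\ref{THEOREM1}.
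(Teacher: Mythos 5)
There is a genuine gap in your step (ii), and it is precisely the point the paper goes out of its way to avoid. You claim that the linearization $(w,q)\mapsto(\Delta w-\nabla q,\ \nabla\cdot w)$ is a bijection from $H^2_{v,\Box}(\mathcal{P})\times H^1_q(\mathcal{P})$ \emph{onto} $L^2(\mathcal{P})\times H^1_\#(\mathcal{P})$, so that the classical implicit function theorem applies. It is not surjective: for $w\in H^2_{v,\Box}(\mathcal{P})$ the divergence $\nabla\cdot w$ necessarily vanishes on the neighborhood $U$ of $\partial Y$ and has zero mean over $\mathcal{P}$ (by the divergence theorem, since $w$ vanishes on $\partial^{\text{int}}\mathcal{P}$ and is periodic), so the second component of the range is a proper closed subspace of $H^1_\#(\mathcal{P})$. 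Lemma~\ref{Lemma: Regularity Stokes} only provides solvability for data $j\in H^1(\mathcal{P})\cap L^2_0(\mathcal{P})$ vanishing near $\partial Y$, not for arbitrary $j\in H^1_\#(\mathcal{P})$. You have in fact inverted the meaning of the paragraph preceding the theorem: the paper does not say the $\Box$-restriction makes the operator surjective; it says the setting is switched to one that \emph{does not require surjectivity}. Shrinking the codomain to repair bijectivity is not a routine fix either, because the pullback $h_s^*\nabla\cdot h_s^{*-1}u$ need not remain mean-zero over $\mathcal{P}$ for $s\neq 0$ (the Jacobian weight spoils $\int_{\mathcal{P}}h_s^*(\nabla\cdot v)\,dx=0$), so one cannot simply declare the mean-zero subspace to be the target of $G$.

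The paper's actual route is different: it first defines $g(s)$ for each $s$ directly from the unique solvability of the deformed Stokes problem (Lemma~\ref{Lemma: Regularity Stokes}, applicable because the second component of $\tilde G(s)$ is zero and hence admissible data), and then obtains differentiability of $s\mapsto g(s)$ from Theorem~\ref{TheoremSimon} (Simon's result), which requires only the one-sided coercivity estimate $\|G(u,p;0)\|\geq C^{-1}\|(u,p)\|$ — i.e.\ injectivity with bounded inverse on the range — together with $C^m$-dependence of $G(\cdot,\cdot,s)$ and $\tilde G(s)$ on $s$. Your step (i) (the chain-rule regularity of the pulled-back operators in $s$) and your verification that solutions of the inhomogeneous Stokes problem with $j$ supported away from $\partial Y$ land in $H^2_{v,\Box}$ are both correct and reusable; what must change is the invocation of the standard IFT, which should be replaced by the existence-plus-coercivity argument above.
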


\begin{proof}
By calculations similar to Section~\ref{SEC:diff}, we find $G,\; \tilde{G}$ to be differentiable in $s=0$. Furthermore, $G(\cdot,\cdot,s)$ is a~bounded linear operator for fixed $s$. The unique solvability of 
\begin{align*}
    G(u,p,s)= \tilde{G}(s)
\end{align*}
is guaranteed for all $s\in V$ by Lemma~\ref{Lemma: Regularity Stokes}, i.e.\ we can properly define the map $g(s)$. Finally, we obtain the estimate
\begin{align*}
    ||G(u,p;0)||_{L^2(\mathcal{P}) \times H^1(\mathcal{P})} \geq C^{-1} ||(u,p)||_{H^2(\mathcal{P}) \times H^1(\mathcal{P})} \quad \forall (u,p) \in H^2_{v,\Box}(\mathcal{P}) \times H^1_q(\mathcal{P})
\end{align*}
using the operator norm $C<\infty$ of the Stokes solution operator on $\mathcal{P}$ introduced in Lemma~\ref{Lemma: Regularity Stokes}. The assertion is a~consequence of Theorem~\ref{TheoremSimon}. This is due to the fact that all values in the second image space of $G$ vanish close to $\partial Y$ by the restrictions on the preimage-spaces. 
\end{proof}

This result immediately translate to the permeability tensor: 
\begin{Korollar}
\label{Cor:Theorem2}
Under the assumptions of Theorem \ref{THEOREM2} the mapping 
\begin{align*}
    R: V \to \mathbb{R} ,\quad  s\mapsto \int_{h_s(\mathcal{P})}  h_s^{*-1}g_u(s) \; dy
\end{align*} 
is $m$-times differentiable. Therefore, the permeability tensor depends differentiably on variations of $s$.
\end{Korollar}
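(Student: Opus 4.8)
The plan is to transfer the $m$-times differentiability of the resolution map $g:V\to H^2_v(\mathcal{P})\times H^1_q(\mathcal{P})$ from Theorem~\ref{THEOREM2} to the real-valued functional $R$ by pulling the integral back to the fixed reference domain $\mathcal{P}$, exactly as was done for the diffusion tensor in Corollary~\ref{Kor: DiffConti} and Corollary~\ref{Kor: DiffDiffbar}. First I would apply the change-of-variables theorem to $R(s)=\int_{h_s(\mathcal{P})} h_s^{*-1}g_u(s)\,dy$, rewriting it as
\begin{align*}
    R(s) = \int_{\mathcal{P}} g_u(s)(x)\,\lvert\det(\nabla h_s(x))\rvert\; dx,
\end{align*}
where $g_u$ denotes the velocity component of $g$. (Note that, in contrast to the diffusion case, no spatial gradient of the solution appears here, since $\IK_{i,j}=\int_\mathcal{P}\omega_j^i\,dy$ is already a zeroth-order functional of $\omega_j$; this actually makes the argument slightly simpler.) The integrand is then a product of two $s$-dependent factors living on the fixed domain $\mathcal{P}$.

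The key observation is that each factor is $m$-times differentiable in $s$ at $s=0$ with values in a suitable function space: the map $s\mapsto g_u(s)\in H^2_v(\mathcal{P})\hookrightarrow L^2(\mathcal{P})$ is $m$-times differentiable at $0$ by Theorem~\ref{THEOREM2}, and the map $s\mapsto \lvert\det(\nabla h_s)\rvert$ inherits $C^m$-regularity as a mapping $(-S,S)\to C^0(\bar{Y})$ from the $C^m$-regularity of the path $s\mapsto h_s$ in $\text{Diff}^2_\Box(\bar{Y})\subset C^2(\bar{Y},\mathbb{R}^d)$, since the determinant is a polynomial in the entries of $\nabla h_s$ and, by the compactness argument underlying the diffeomorphism property, $\det(\nabla h_s)$ stays uniformly bounded away from zero near $s=0$ so that the absolute value is smooth there. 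Then I would invoke the product rule for Fréchet-differentiable maps together with the fact that integration against the bounded bilinear pairing $(f,\rho)\mapsto\int_\mathcal{P} f\rho\,dx$ on $L^2(\mathcal{P})\times L^2(\mathcal{P})$ (or $L^2\times C^0$) is a bounded bilinear operator, hence smooth; composing a $C^m$ map into a product space with a smooth bilinear map yields a $C^m$ scalar function, which gives the claim for $R$ and thereby for each entry $\IK_{i,j}$.

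I do not expect a serious obstacle here — the statement is essentially a corollary packaging of Theorem~\ref{THEOREM2} in the same way Corollary~\ref{Kor: DiffDiffbar} packages Theorem~\ref{THEOREM1b}. The only point requiring mild care is the regularity of $s\mapsto\lvert\det(\nabla h_s)\rvert$ at $s=0$: one must confirm that $\det(\nabla h_s)$ does not change sign in a neighborhood of $s=0$ so that the absolute value does not destroy differentiability. This follows because $h_0=\text{id}_{\bar{Y}}$ gives $\det(\nabla h_0)\equiv 1$, and the uniform continuity of $s\mapsto\nabla h_s$ in $C^0(\bar{Y})$ keeps the determinant positive and bounded below on a small interval around $0$ — the same uniform-boundedness-away-from-zero fact that was already used in establishing the continuity of $F$ via the cofactor representation. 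With this in hand, the product-rule computation is routine and the corollary follows.
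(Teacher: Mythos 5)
Your proposal is correct and follows exactly the route the paper intends: the paper states this corollary as an immediate analogue of Corollary~\ref{Kor: DiffDiffbar}, whose proof is precisely your change-of-variables to the reference domain followed by the product rule for Fréchet-differentiable maps, with the simplification you correctly note that no gradient of the solution appears. Your additional care about $\det(\nabla h_s)$ staying positive near $s=0$ (so the absolute value is harmless) matches the observation already used in the paper's continuity argument for $F$.
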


\subsection{Continuous dependence of Darcy velocity and pressure}

The Darcy velocity field $v$ enters the first-order term of the transport equation~(\ref{EQ:transport}) as a~parameter. Again, in order to apply linear parabolic theory as in Section \ref{SEC:DiffExist1Sided}, we first need to establish sufficient regularity of $v$ which immediately rises the question of dependence on the permeability $\IK$. A~special difficulty arises from the fact that the stationary Darcy equation~(\ref{EQ:Darcy}) acts on time-slices of the space-time cylinder $\Omega_T$.
In this chapter we investigate the effect of permeability tensors continuously depending on space and time on the flow field fulfilling Darcy's equation. The following result establishes local Lipschitz continuity with respect to the $L^2$-norm in the velocity field. Assume a~typical flow-channel scenario with flux boundary conditions on $\partial\Omega_\text{flux}$ and an~outlet $\partial\Omega_\text{Dir}$ with zero Dirichlet data for the pressure with $\partial\Omega = \partial\Omega_\text{Dir} \cup \partial\Omega_\text{flux}$ and Hausdorff measure $\mathcal{H}^{d-1} (\partial\Omega_\text{Dir}) >0$. 
First, we consider (\ref{EQ:Darcy}) as an~elliptic equation for the pressure $p$ in the following weak form with a~general source term $\tilde{f}\in L^2(\Omega)$:

Find $p\in H_{\text{Dir}}^1(\Omega):=\left\lbrace v\in H^1(\Omega):\; v=0 \text{ on } \Omega_{\text{Dir}}\right\rbrace$ such that for all \mbox{$q \in H_{\text{Dir}}^1(\Omega)$}
\begin{align}
    \label{EQ:DarcyWeak}
    \int_{\Omega}  \mathbb{K} \nabla p \cdot \nabla q \; dx - (g_{\text{flux}},q)_{L^2(\partial\Omega_\text{flux}) }= \int_{\Omega} \tilde{f} q \; dx. 
\end{align}
The associated velocity field is then given as
\begin{align*}
     v=-\IK \nabla p.
\end{align*}
As such, we can establish regularity of $v$ by analyzing the pressure equation~(\ref{EQ:DarcyWeak}) and deduce the relevant properties from~$p$.
Next, we follow the approach taken in \cite{Celepi06} where continuous dependence on parameters in case of the Brinkman-Forchheimer equation was established. Considering two pressure solutions to Darcy's equation $(p_1,p_2)$ related to a~pair of coefficients $(\mathbb{K}_1, \mathbb{K}_2)$, we test the difference of the weak formulations with $p_2-p_1$. Using suitable estimates on the new right-hand side the assertion is established. More precisely, the following statement holds:

\begin{Lemma}
\label{Lemma:ContiFlow}
Let $\Omega$ be a~Lipschitz bounded and connected domain with flux boundary conditions on $\partial\Omega_\text{flux}$ and homogeneous pressure boundary conditions on $\partial\Omega_\text{Dir}$:
\begin{align*}
    \IK\nabla p_{|\partial\Omega_\text{flux}} \cdot \nu= g_\text{flux} \in L^2(\partial\Omega_\text{flux}), \quad  p = 0 \in H^{\frac{1}{2}} (\partial\Omega_\text{Dir}).
\end{align*}
Assume that the Dirichlet boundary part $\partial\Omega_\text{Dir}$ is of positive measure. Furthermore, let $\mathbb{K}_1, \mathbb{K}_2 \in \left( L^\infty\left({\Omega}\right) \right)^{d \times d}$ be permeability tensor fields uniformly $\lambda >0$ -coercive and uniformly bounded in the Frobenius norm a.e. by $K_{max}$. In addition, let $\tilde{f}\in L^2(\Omega)$. Denoting the respective solutions to Darcy's equation by \mbox{$(v_1,p_1), \; (v_2,p_2) \in L^2(\Omega)\times H^1(\Omega)$} the following estimates hold:
\begin{align*}
     ||p_2-p_1||_{H^1(\Omega)} \leq C ||\mathbb{K}_2-\mathbb{K}_1||_{L^\infty(\Omega)},\\
     ||v_2-v_1||_{L^2(\Omega)} \leq C ||\mathbb{K}_2-\mathbb{K}_1||_{L^\infty(\Omega)}, \nonumber
\end{align*}
where the constant $C$ only depends on $\lambda$, $K_{max}$, $\Omega$ and the Darcy data $g_{\text{flux}},\tilde{f}$. 
\end{Lemma}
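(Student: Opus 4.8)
The plan is to run the standard energy-type perturbation argument for a linear elliptic problem with varying principal coefficient, comparing the two pressure solutions against each other. Existence and uniqueness of $(v_i,p_i)$, $i\in\{1,2\}$, follow from the Lax--Milgram theorem using the $\lambda$-coercivity of $\mathbb{K}_i$ together with the Poincar\'e inequality on $H_{\text{Dir}}^1(\Omega)$, which is available since $\Omega$ is connected and $\mathcal{H}^{d-1}(\partial\Omega_{\text{Dir}})>0$. First I would subtract the weak formulations~(\ref{EQ:DarcyWeak}) for $p_1$ and $p_2$; since the flux boundary term and the right-hand side are identical they cancel, and writing $\mathbb{K}_2\nabla p_2-\mathbb{K}_1\nabla p_1=\mathbb{K}_2\nabla w+(\mathbb{K}_2-\mathbb{K}_1)\nabla p_1$ with $w:=p_2-p_1\in H_{\text{Dir}}^1(\Omega)$, one obtains
\[
\int_\Omega \mathbb{K}_2\nabla w\cdot\nabla q\;dx = -\int_\Omega (\mathbb{K}_2-\mathbb{K}_1)\nabla p_1\cdot\nabla q\;dx,\qquad\forall q\in H_{\text{Dir}}^1(\Omega),
\]
i.e.\ $w$ solves a Darcy-type problem with coefficient $\mathbb{K}_2$ and a right-hand side controlled by $(\mathbb{K}_2-\mathbb{K}_1)\nabla p_1$.

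Next I would test this identity with $q=w$. Uniform $\lambda$-coercivity of $\mathbb{K}_2$ bounds the left-hand side from below by $\lambda\|\nabla w\|_{L^2(\Omega)}^2$, while Cauchy--Schwarz and the $L^\infty$-bound on $\mathbb{K}_2-\mathbb{K}_1$ bound the right-hand side by $\|\mathbb{K}_2-\mathbb{K}_1\|_{L^\infty(\Omega)}\|\nabla p_1\|_{L^2(\Omega)}\|\nabla w\|_{L^2(\Omega)}$, whence
\[
\|\nabla w\|_{L^2(\Omega)} \leq \lambda^{-1}\|\mathbb{K}_2-\mathbb{K}_1\|_{L^\infty(\Omega)}\,\|\nabla p_1\|_{L^2(\Omega)}.
\]
It then remains to bound $\|\nabla p_1\|_{L^2(\Omega)}$ purely in terms of the data: testing the weak form for $p_1$ with $q=p_1$, using coercivity on the left and the trace inequality for the $\partial\Omega_{\text{flux}}$-integral together with the Poincar\'e inequality on $H_{\text{Dir}}^1(\Omega)$ on the right, one gets $\|\nabla p_1\|_{L^2(\Omega)}\leq C(\lambda,\Omega)\bigl(\|g_{\text{flux}}\|_{L^2(\partial\Omega_{\text{flux}})}+\|\tilde f\|_{L^2(\Omega)}\bigr)=:C_{\mathrm{data}}$. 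One more application of Poincar\'e converts the gradient bound on $w$ into $\|p_2-p_1\|_{H^1(\Omega)}\leq C\|\mathbb{K}_2-\mathbb{K}_1\|_{L^\infty(\Omega)}$, with $C$ depending only on $\lambda$, $K_{max}$, $\Omega$, $g_{\text{flux}}$ and $\tilde f$.

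For the velocity estimate I would use the algebraic identity $v_2-v_1 = -\mathbb{K}_2\nabla(p_2-p_1) - (\mathbb{K}_2-\mathbb{K}_1)\nabla p_1$, estimating the first term by $K_{max}\|\nabla w\|_{L^2(\Omega)}$ and the second by $\|\mathbb{K}_2-\mathbb{K}_1\|_{L^\infty(\Omega)}\|\nabla p_1\|_{L^2(\Omega)}$, then invoking the gradient estimate for $w$ and the bound $\|\nabla p_1\|_{L^2(\Omega)}\leq C_{\mathrm{data}}$. There is no genuine analytic obstacle here; the argument is the textbook perturbation estimate. The only steps needing care are (i) isolating $(\mathbb{K}_2-\mathbb{K}_1)\nabla p_1$ as the effective source so that coercivity of $\mathbb{K}_2$ — rather than an inconvenient mixed quantity — drives the bilinear estimate, and (ii) justifying the Poincar\'e and trace inequalities, which rely crucially on $\Omega$ being connected and on $\mathcal{H}^{d-1}(\partial\Omega_{\text{Dir}})>0$; if the Dirichlet part degenerated, the pressure would only be determined up to an additive constant and the $H^1$-estimate would fail.
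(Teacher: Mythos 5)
Your proposal is correct and follows essentially the same route as the paper's proof: an a-priori energy bound on one pressure gradient in terms of the data, followed by testing the difference of the weak formulations with $p_2-p_1$ and using coercivity plus the $L^\infty$-bound on $\mathbb{K}_2-\mathbb{K}_1$, and finally the algebraic identity for $v_2-v_1$. The only cosmetic difference is that you divide through by $\|\nabla w\|_{L^2(\Omega)}$ after Cauchy--Schwarz, whereas the paper uses Young's inequality with $\epsilon$ and absorbs the gradient term.
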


\begin{proof}
First of all, we note that standard elliptic theory guarantees the existence of solutions $p_1,p_2\in H_{\text{Dir}}^1(\Omega)$  to~(\ref{EQ:DarcyWeak}).
In a~first step towards continuous dependence of solutions on $\IK$, we show uniform boundedness of $p$ in the $H^1$-seminorm by revisiting well-established energy methods. As $\mathbb{K}$ is symmetric positive definite, there exists a~unique symmetric root $\mathbb{K}^\frac{1}{2}$. As $p$ itself is an~admissible test function, the weak formulation directly yields for an~arbitrary $\epsilon>0$:
\begin{align*}
    \lambda || \nabla p ||^2_{L^2(\Omega)} &\leq || \mathbb{K}^\frac{1}{2} \nabla p ||^2_{L^2(\Omega)} \leq (g_{\text{flux}},p)_{L^2(\partial\Omega_\text{flux}) } +||\tilde{f}||_{L^2(\Omega)} ||p||_{H^1(\Omega)}  \nonumber\\
    &\leq C(\epsilon) \left(||g_\text{flux}||^2_{L^2(\partial\Omega_\text{flux})}+ ||\tilde{f}||^2_{L^2(\Omega)} \right) +\epsilon ||\nabla p||^2_{L^2(\Omega)} 
\end{align*}
using Young's and Poincaré's inequality as well as the trace theorem. The eigenvalues of $\mathbb{K}$ being bounded from below by $\lambda$ and choosing $\epsilon$ small enough, we have
\begin{align*}
    ||\nabla p ||_{L^2(\Omega)}&\leq C \left( || g_\text{flux}||_{L^2(\partial\Omega_\text{flux})}+||\tilde{f}||_{L^2(\Omega)} \right)
\end{align*}
with $C$ depending on Poincaré's constant and $\lambda$ only.

Next, assume $p_1, \; p_2$ to be pressure solutions to the Darcy problems respective to $\IK_1,\IK_2$. Testing the difference of the weak formulations with $p:=p_2-p_1$ leads to
\begin{align*}
    ||\mathbb{K}_1^\frac{1}{2} \nabla p ||^2_{L^2(\Omega)} &= \int_\Omega (\mathbb{K}_1 -\mathbb{K}_2) \nabla p_2 \cdot \nabla p \; dx \\ & \leq \esssup\limits_{x\in \Omega}(||\mathbb{K}_1 -\mathbb{K}_2||_F) \left( C(\epsilon) ||\nabla p_2||^2_{L^2(\Omega)} + \epsilon ||\nabla p||^2_{L^2(\Omega)} \right) \nonumber \\
    & \leq \esssup\limits_{x\in \Omega}(||\mathbb{K}_1 -\mathbb{K}_2||_F)  C(\epsilon) ||\nabla p_2||^2_{L^2(\Omega)} + 2K_{max} \epsilon ||\nabla p||^2_{L^2(\Omega)}, \nonumber
\end{align*}
where $||.||_F$ denotes the Frobenius norm of the matrices. Using the uniform boundedness in the gradient of $p_2$ established in the first step and choice of $\epsilon$ small enough yields the assertion on the pressure. Calculating
\begin{align*}
    ||v_1 -v_2||_{L^2(\Omega)} &= ||\mathbb{K}_1 \nabla p_1 -\mathbb{K}_2 \nabla p_2 ||_{L^2(\Omega)}  \\
    &\leq ||\mathbb{K}_1||_{L^\infty(\Omega)}  ||\nabla p ||_{L^2(\Omega)} + ||\mathbb{K}_1-\mathbb{K}_2||_{L^\infty(\Omega)}  ||\nabla p_2 ||_{L^2(\Omega)} \nonumber\\
    &\leq C ||\mathbb{K}_2-\mathbb{K}_1||_{L^\infty(\Omega)},\nonumber
\end{align*}
we obtain the assertion on the velocities applying the same reasoning.
\end{proof}

Unfortunately, the last result cannot be generalized to stronger norms in $v$ by demanding higher regularity to the data, coefficient or $\Omega$ due to the mixed boundary conditions. For a~counter-example in a~setting of maximal smoothness see \cite{Eliahu}. 

Yet, in the case of pure Dirichlet boundaries, it is straightforward to show that the above estimate indeed can be refined to hold in $L^\infty(\Omega)$ by standard elliptic theory~\cite{Evans}. We capture this observation in the following Corollary.

\begin{Korollar}
\label{Cor:DarcyLinfty}
Let the assumptions of Lemma~\ref{Lemma:ContiFlow} hold with $\partial \Omega_\text{flux}=\emptyset$, a~$C^3$-domain $\Omega\subset\mathbb{R}^d$, $\tilde{f} \in H^1 (\Omega)$ and $d\in\{2,3\}$. Furthermore, let $\IK_1,\IK_2 \in (C^2(\bar{\Omega}))^{d\times d}$ be uniformly coercive and uniformly bounded. Then 
\begin{align*}
     ||v_2-v_1||_{L^\infty(\Omega)} \leq C ||\mathbb{K}_2-\mathbb{K}_1||^{1-\frac{d}{4}}_{L^\infty(\Omega)}.
\end{align*}
\end{Korollar}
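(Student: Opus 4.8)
The plan is to upgrade the $H^1$-estimate of Lemma~\ref{Lemma:ContiFlow} to an $L^\infty$-estimate by interpolating between the $H^1$-bound (which gives a power of $\|\IK_2-\IK_1\|_{L^\infty}$) and a uniform higher-order bound coming from elliptic regularity, then passing through a Sobolev/Gagliardo--Nirenberg embedding. First I would observe that since $\partial\Omega_\text{flux}=\emptyset$, the pressure problem~(\ref{EQ:DarcyWeak}) is a pure Dirichlet problem on the $C^3$-domain $\Omega$; with $\IK_i\in (C^2(\bar\Omega))^{d\times d}$ uniformly coercive and $\tilde f\in H^1(\Omega)$, standard elliptic regularity (e.g.\ \cite{Evans}) yields $p_i\in H^3(\Omega)$ with a bound on $\|p_i\|_{H^3(\Omega)}$ depending only on the coercivity constant, the $C^2$-bounds on $\IK_i$, $\Omega$, and $\|\tilde f\|_{H^1(\Omega)}$. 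Consequently $v_i=-\IK_i\nabla p_i\in (H^2(\Omega))^d$ with a uniform bound, so the difference $v_2-v_1$ is bounded in $H^2(\Omega)$ independently of $\IK_1,\IK_2$ in the admissible class.

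Next I would set up the interpolation. From Lemma~\ref{Lemma:ContiFlow} we have $\|v_2-v_1\|_{L^2(\Omega)}\le C\|\IK_2-\IK_1\|_{L^\infty(\Omega)}$, and from the previous paragraph $\|v_2-v_1\|_{H^2(\Omega)}\le C$. The Gagliardo--Nirenberg interpolation inequality gives, for $d\in\{2,3\}$,
\begin{align*}
\|w\|_{L^\infty(\Omega)} \le C\,\|w\|_{H^2(\Omega)}^{\theta}\,\|w\|_{L^2(\Omega)}^{1-\theta},
\end{align*}
where $\theta$ is determined by the scaling relation $0 = \theta\,(2-\tfrac d2) + (1-\theta)\,(-\tfrac d2)$, i.e.\ $0 = 2\theta - \tfrac d2$, giving $\theta = d/4$. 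Applying this to $w=v_2-v_1$ and inserting the two bounds yields
\begin{align*}
\|v_2-v_1\|_{L^\infty(\Omega)} \le C\cdot 1^{d/4}\cdot \bigl(\|\IK_2-\IK_1\|_{L^\infty(\Omega)}\bigr)^{1-d/4} = C\,\|\IK_2-\IK_1\|_{L^\infty(\Omega)}^{1-\frac d4},
\end{align*}
which is exactly the claimed estimate. (For $d=2$ this reads $\|v_2-v_1\|_{L^\infty}\le C\|\IK_2-\IK_1\|_{L^\infty}^{1/2}$; for $d=3$, exponent $1/4$; in both cases $1-d/4>0$, so this is a genuine continuity statement.)

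The main obstacle I anticipate is not the interpolation itself but verifying the uniform $H^2$-bound on $v_2-v_1$ carefully: one must check that the constant in the elliptic $H^3$-estimate for $p_i$ can be chosen uniformly over the admissible family of coefficients $\IK_i$, which is where the uniform coercivity and uniform $C^2$-bounds are essential — the constant in interior and boundary elliptic estimates depends on the modulus of continuity and the ellipticity constant of the coefficients, both of which are controlled here. A secondary, minor point is to confirm that the Gagliardo--Nirenberg inequality applies on the bounded $C^3$-domain $\Omega$ with the $H^2$-norm (full norm, not just the seminorm) on the right, which holds by the standard version on bounded Lipschitz (a fortiori $C^3$) domains. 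Once these uniformity issues are settled, combining them with Lemma~\ref{Lemma:ContiFlow} is immediate.
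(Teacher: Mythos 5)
Your proposal is correct and follows essentially the same route as the paper: uniform $W^{3,2}$ elliptic bounds for the pressures on the pure Dirichlet problem, the $L^2$-estimate from Lemma~\ref{Lemma:ContiFlow}, and Gagliardo--Nirenberg interpolation with exponent $\theta=d/4$. The only (cosmetic) difference is that the paper interpolates $\nabla(p_2-p_1)$ between $\lVert D^3 p\rVert_{L^2}$ and $\lVert\nabla p\rVert_{L^2}$ and then passes to the velocities via $v=-\IK\nabla p$, whereas you interpolate $v_2-v_1$ directly between $H^2$ and $L^2$; both rest on the same uniformity of the elliptic constant over the admissible coefficient class, which you correctly flag.
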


\begin{proof}
Using standard elliptic theory, the solutions $p_1,p_2$ to~(\ref{EQ:DarcyAdvect}) are uniformly bounded in $W^{3,2}(\Omega)$, cf.~\cite{Evans}. By the Gagliardo-Nirenberg interpolation theorem~\cite{InterpolationIneq} we have for $p=p_2-p_1$
\begin{align*}
    ||\nabla p||_{L^\infty(\Omega)} \leq C_1 ||D^3p||^\frac{d}{4}_{L^2(\Omega)} ||\nabla p||_{L^2(\Omega)}^{1-\frac{d}{4}} + C_2  ||\nabla p||_{L^2(\Omega)}.
\end{align*}
Using the uniform boundedness of $||D^3p||_{L^2(\Omega)}$ and Lemma~\ref{Lemma:ContiFlow}, we see
\begin{align*}
    ||\nabla p||_{L^\infty(\Omega)} \leq C ||\mathbb{K}_2-\mathbb{K}_1||_{L^\infty(\Omega)}^{1-\frac{d}{4}}
\end{align*}
which translates to the statement claimed.
\end{proof}

\begin{Bem}
\label{Rem:ContinuousV}
In case $\IK:\Omega_T\to\mathbb{R}^{d\times d}$ fulfils the conditions of Corollary~\ref{Cor:DarcyLinfty} for each time-slice and the additional condition $\IK \in C\left(0,T; (L^\infty(\Omega))^{d\times d}\right)$ holds, Darcy's velocity $v:\Omega_T\to\mathbb{R}^d$ is a~continuous function on the whole space-time cylinder.
\end{Bem}

\subsection{Existence for partial coupling}
\label{SEC:AdvectExistence}
Analogously to Section~\ref{SEC:DiffExist1Sided}, we use our prior results to prove local-in-time existence of solutions to the model specified in Section~\ref{SEC:SettingAdvect}. Using the prior regularity results of this section sufficient smoothness of the Darcy velocity field is established. Again, we redefine the functions $\phi,\;  \sigma,\; \ID,\; \IK$ from mappings on $\Omega_T$, cf.~(\ref{EQ:transportDIFF}), to mappings of the real valued order parameter without change of notation. In order to avoid additional difficulties due to degeneracy, we require
\begin{align}
\label{AssumptionsOneSidedAdvect}
   \forall s\in (-S,S): 0< \phi(s) <1,  \quad \sigma(s) >0, \quad \ID(s)>0, \quad \IK(s)>0,
\end{align}
to hold along the prescribed path $h$ of diffeomorphisms within the master unit-cell $Y^*$.

\begin{Theorem} \textit{Existence of strong solutions, partial coupling, advective transport}\\
Let $\Omega\subset\mathbb{R}^d,\; d\in\{2,3\}$ be a~$C^3$-domain. Concerning the transport equation~(\ref{EQ:transportAdvect}), let $r>d+2$ hold with initial conditions $c_0\in W^{2-\frac{2}{r},r}_r(\Omega)$ and boundary conditions $C_0 \in W^{1-\frac{1}{2r}, 2-\frac{1}{r}}(\partial \Omega_T)$ being compatible in the sense of $C_0(0,\cdot)=c_0$ on $\partial\Omega$. Concerning Darcy's equation~(\ref{EQ:DarcyAdvect}), let $p_{|\partial \Omega} = 0$ and $\tilde{f}\in H^2(\Omega)$. 
Furthermore, let the evolution of the pore-space geometry be given by an~order parameter $s\in C^1([0,T_1);C^2(\bar{\Omega}))$, \mbox{$s(t,x)\in (-S+\epsilon,S-\epsilon)$}, $ \epsilon>0\;  \forall (t,x)\in \Omega_T$, and a~path $h \in C^3(-S,S;\text{Diff}_{\Box}^{2,1}(\bar{Y}))$ of diffeomorphisms such that $h_0 = \text{id}_{\bar{Y}}$. Assume the initial inclusion $Y^*\setminus \bar{\mathcal{P}} \subset Y^*$ to be a~compactly contained $C^{2,1}$ open set. Let~(\ref{AssumptionsOneSidedAdvect}) hold true and the reaction rate $f$ be locally Lipschitz. Then there exists a~time $0<T\leq T_1$ such that the system (\ref{EQ:transportAdvect}),~(\ref{EQ:DarcyAdvect}) admits a~unique solution \mbox{$(c,v,p)\in \mathcal{X}_1\times C(0,T,L^{2}(\Omega)) \times C(0,T,W^{1,2}(\Omega))$}.
\label{Theorem:AdvectOneSideExist}
\end{Theorem}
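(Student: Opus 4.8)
The plan is to follow the proof of Theorem~\ref{Theorem:DiffOneSideExist} almost verbatim, the only genuinely new ingredient being the advective term $\nabla\cdot(vc)$, which first forces us to establish enough regularity of the Darcy velocity $v$ from the prescribed geometry evolution. So I would begin by collecting the regularity of the effective coefficients along the path $h$: Theorem~\ref{THEOREM1b} together with Corollary~\ref{Kor: DiffDiffbar} gives $\ID$ as a $C^3$-function of the order parameter, the change-of-variables representations used in the proof of Theorem~\ref{Theorem:DiffOneSideExist} give $\phi,\sigma$ of class $C^2$ in $s$, and Corollary~\ref{Cor:Theorem2} gives $\IK$ of class $C^2$ in $s$. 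Composing with $s\in C^1([0,T_1);C^2(\bar{\Omega}))$, the leading transport coefficients $\ID_{ij}(s)/\phi(s)$ are $C^1$ in time and $C^2$ in space and $\IK\circ s$ is $C^2$ in space on each time-slice and continuous in time as an $L^\infty(\Omega)$-valued map. By continuity of all effective parameters on the compact range $[-S+\epsilon,S-\epsilon]$ and assumption~(\ref{AssumptionsOneSidedAdvect}), there is a $\delta\in(0,1)$ with $\delta<\phi<1-\delta$, $\sigma>\delta$, $\ID>\delta\mathds{1}_d$, $\IK>\delta\mathds{1}_d$ throughout, so uniform coercivity and bounds are in place.

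Next I would pin down the Darcy velocity. With $\IK\circ s$ uniformly coercive, uniformly bounded, of class $C^2$ in space on each time-slice and continuous in time, with $\tilde f\in H^2(\Omega)\subset H^1(\Omega)$, $p_{|\partial\Omega}=0$ and $\Omega$ of class $C^3$, Corollary~\ref{Cor:DarcyLinfty} and Remark~\ref{Rem:ContinuousV} apply: the pressure is uniformly bounded in $W^{3,2}(\Omega)$ on time-slices, hence $\nabla p$ is uniformly bounded in $C(\bar{\Omega})$ by Sobolev embedding for $d\le 3$, so $v=-\IK\nabla p\in C(\overline{\Omega_T})\subset L^\infty(\Omega_T)$; Lemma~\ref{Lemma:ContiFlow} additionally yields $p\in C(0,T;W^{1,2}(\Omega))$ and $v\in C(0,T;L^2(\Omega))$, which is exactly the regularity claimed for $(v,p)$ in the statement.

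Then I would run Schauder's fixed point theorem on $\mathcal{K}_1=\{c\in\mathcal{X}_1:\ \|c\|_{\mathcal{X}_1}\le K\}$ as in Theorem~\ref{Theorem:DiffOneSideExist}. Writing~(\ref{EQ:transportAdvect}) in fixed-point form and using $\nabla\cdot(v\tilde c)=v\cdot\nabla\tilde c+\tilde c\,\tilde f$, the extra right-hand side contribution is $-\phi(s)^{-1}(v\cdot\nabla\tilde c+\tilde f\,\tilde c)$. The map $\mathcal{F}_1:\mathcal{K}_1\to L^r(\Omega_T)$ to this right-hand side is compact via the embeddings~(\ref{CompactEmbedX1}) together with $v\in L^\infty(\Omega_T)$ and $\tilde f\in H^2(\Omega)\hookrightarrow L^\infty(\Omega)$, and $\mathcal{F}_2:L^r(\Omega_T)\to\mathcal{X}_1$ is the continuous parabolic solution operator of Theorem~\ref{TheoremLadyzenkaya} with $a_{ij}=\ID_{ij}(s)/\phi(s)$, $a_i=-\sum_j\partial_j(\ID_{ij}(s)/\phi(s))$, $a=0$. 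For the self-mapping property one estimates $\|c\|_{\mathcal{X}_1}\le C_p(\mathrm{DC}+\|\text{RHS}\|_{L^r(\Omega_T)})$, the new term being bounded by $\|v\|_{L^\infty(\Omega_T)}\|\nabla\tilde c\|_{L^r(\Omega_T)}+\|\tilde f\|_{L^\infty(\Omega)}\|\tilde c\|_{L^r(\Omega_T)}\le C(T|\Omega|)^{1/(2r)}\|\tilde c\|_{\mathcal{X}_1}$ by Hölder and $\mathcal{X}_1\hookrightarrow W^{3/4,3/2}_{2r}(\Omega_T)$; since every $\tilde c$-dependent term carries a positive power of $T$, choosing $K$ large and $0<T\le T_1$ small gives $\mathcal{F}=\mathcal{F}_2\circ\mathcal{F}_1:\mathcal{K}_1\to\mathcal{K}_1$ and Schauder produces a fixed point $c\in\mathcal{X}_1$. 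Uniqueness follows as in Theorem~\ref{Theorem:DiffOneSideExist}: for two solutions, test the equation for $\bar c=c_2-c_1$ with $\bar c$, treat the advective term via $\int_\Omega v\cdot\nabla\bar c\,\bar c\,dx=-\tfrac12\int_\Omega\tilde f\,\bar c^2\,dx$ (the boundary term vanishing since $\bar c|_{\partial\Omega}=0$) or by Young's inequality absorbing the gradient into the coercive diffusion term, and conclude with Gronwall.

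The main obstacle is the second step: upgrading the space–time regularity of $\IK\circ s$ delivered by Corollary~\ref{Cor:Theorem2} to an $L^\infty$-in-space–time bound on $v$, without which the advective term need not lie in $L^r(\Omega_T)$. This is precisely where the pure Dirichlet pressure boundary condition, the $C^3$-regularity of $\Omega$, and the higher smoothness of $\tilde f$ and $\IK$ are indispensable (Corollary~\ref{Cor:DarcyLinfty}); under mixed flux/Dirichlet data the required $L^\infty$-estimate on $v$ fails, cf.\ the counterexample cited after Lemma~\ref{Lemma:ContiFlow}. Once $v$ is controlled, the remainder is a routine repetition of the diffusive argument.
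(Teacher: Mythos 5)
Your proposal is correct and follows essentially the same route as the paper: establish $\IK(s(t,\cdot))\in (C^2(\bar{\Omega}))^{d\times d}$ via Corollary~\ref{Cor:Theorem2}, upgrade to $v\in C(\overline{\Omega_T})$ through Corollary~\ref{Cor:DarcyLinfty} and Remark~\ref{Rem:ContinuousV}, and then rerun the Schauder fixed-point and Gronwall arguments of Theorem~\ref{Theorem:DiffOneSideExist}. The only (immaterial) deviation is that you place the advective contribution $\phi^{-1}(v\cdot\nabla\tilde c+\tilde f\,\tilde c)$ on the right-hand side of the fixed-point map and control it by a smallness-in-$T$ estimate, whereas the paper absorbs $v$ into the lower-order coefficients $a_i$, $a$ of the linear parabolic operator in Theorem~\ref{TheoremLadyzenkaya}; both choices close the argument.
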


\begin{proof}
The proof of this assertion follows along the lines of the proof of Theorem~\ref{Theorem:DiffOneSideExist}. In addition, we must now also consider the smoothness of $\IK(s(t,x))$ as well as of the associated Darcy velocity field $v(t,x)$.
By the assumptions on the geometry alteration and Corollary~\ref{Cor:Theorem2}, \mbox{$\IK(s(t,\cdot))\in \left(C^2(\bar{\Omega})\right)^{d\times d}$} for every $t\in[0,T_1)$. Due to~(\ref{AssumptionsOneSidedAdvect}) and Remark~\ref{Rem:ContinuousV} we obtain \mbox{$v \in C(\overline{\Omega_T})$} for some $0<T\leq T_1$. By standard Sobolev embedding theorems, also $\nabla \cdot v\in C(\overline{\Omega_T})$ holds true.  
As such, we ensure the regularity of $v$ as required for coefficients in Theorem~\ref{TheoremLadyzenkaya} and may therefore proceed with the fixed point argument analogously to Theorem~\ref{Theorem:DiffOneSideExist}. As mentioned before, the solutions $(v(t,\cdot),p(t,\cdot))$ for Darcy's equation are uniquely determined for each time-slice $t$. Repeating the arguments following~(\ref{EQ:DiffUniqueness}), uniqueness for the full solution triplet is obtained.
\end{proof}

\section{Conclusion}
\label{SEC: Conclusion}
In this research, we presented local-in-time existence results to a common class of multi-scale models for reactive transport in evolving porous media. Describing geometry alterations by diffeomorphisms, smooth dependence of the diffusion and permeability tensors on the evolving domain is proven. Being computed as affine-linear functionals of solutions to elliptic and Stokes-type PDEs, the method of transformation to a reference domain in combination with the implicit function theorem was applied for this purpose. Subsequently, we leveraged the resulting smoothness in the micro-macro coupling to short-time existence results to the partially and fully coupled system in the diffusive transport case. Similar results are presented for the partial coupling in the advective case additionally incorporating Darcy's equation. 

As the second major aspect, we proved the induction of suitable paths of diffeomorphisms by the level-set equation. Therefore, our results cover a~broad range of possible underlying microscopic geometries. In particular, no explicit construction of adequate families of diffeomorphisms is needed using this methodology.  

Further research is needed to derive conditions under which the short-time existence results presented in this paper can be extended to global solutions. This especially concerns the adequate treatment of topological changes in the underlying geometry which is not approachable using diffeomorphisms. As such, the methods used in this research are in particular unable to analyze the behavior of solutions in scenarios involving clogging or the complete dissolution of the porous structure. 

In addition, further work is required to extend our results to local-in-time existence for the fully coupled model including advective transport. More precisely, methods need to be refined in order to compensate for the drop of regularity between the permeability tensor field and the advective velocity field. Finally, future effort is required to generalize our findings to the multi-solute and multi-mineral case, leading to a~system of possibly non-linearly coupled parabolic equations on the macroscopic scale.      

\section*{Acknowledgements}
This research was supported by the DFG Research Training Group 2339 Interfaces, Complex Structures, and Singular Limits. Nadja Ray was also supported by the DFG Research Unit 2170 MadSoil. We further acknowledge the insightful discussions with Helmut Abels, University of Regensburg.

\appendix
\section{}

\begin{Theorem} (Tubular neighborhood theorem, adapted from Theorem 1.5, \cite{Henry})\\
\label{TheoremTubNeigh}
Let $\Omega\subset \mathbb{R}^d$ open have a~$C^{m,\alpha}$-regular boundary, $2\leq m\leq \infty$. There exists $r>0$ so that if
\begin{align*}
    B_r(\partial \Omega) &= \{x:\text{ dist}(x,\partial \Omega)<r\}, \\
    \pi(x) &= \text{the point of } \partial \Omega \text{ nearest to } x, \\
    t(x) &= \pm \text{dist}(x,\partial\Omega) \quad  \text{('+' outside, '-' inside)},
\end{align*}
then $t(\cdot):B_r(\partial\Omega) \to (-r,r), \; \pi(\cdot): B_r(\partial \Omega)\to \partial\Omega$ are well defined, $\pi$ is a~$C^{m-1,\alpha}$-retraction onto $\partial\Omega$ ($\pi(x)=x$ when $x\in\partial\Omega$) and t has the same smoothness as $\partial\Omega$. Further
\begin{align*}
x \mapsto (t(x),\pi(x)): \; B_r (\partial\Omega) \to (-r,r)\times \partial\Omega 
\end{align*}
is a~$C^{m-1,\alpha}$-diffeomorphism with inverse
\begin{align*}
    (t,\zeta) \mapsto \zeta +t\nu(\zeta): \; (-r,r) \times \partial\Omega \to B_r(\partial\Omega).
\end{align*}
$t(\cdot)$ is the unique solution to $|\nabla t(x)|=1$ in $B_r(\partial\Omega)$.
The largest choice of $r$ is $r=1/\max |k|$, where $k$ is the sectional curvature of the boundary in any (tangent) direction at any point of $\partial\Omega$.
\end{Theorem}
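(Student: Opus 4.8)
The plan is to exhibit the asserted diffeomorphism as the inverse of the normal map
\[
E:\partial\Omega\times\mathbb{R}\to\mathbb{R}^d,\qquad E(\zeta,t)=\zeta+t\,\nu(\zeta),
\]
where $\nu$ is the outward unit normal field along $\partial\Omega$. Since $\partial\Omega$ is $C^{m,\alpha}$ and $\nu$ is built from first derivatives of local boundary charts, $\nu\in C^{m-1,\alpha}$, hence $E\in C^{m-1,\alpha}$. First I would differentiate: for $v\in T_\zeta\partial\Omega$ one gets the block form $d E_{(\zeta,t)}(v,s)=(I-tS_\zeta)v+s\,\nu(\zeta)$ with $S_\zeta$ the Weingarten (shape) operator, so at $t=0$ the differential is the splitting isomorphism $T_\zeta\partial\Omega\oplus\mathbb{R}\nu(\zeta)=\mathbb{R}^d$, and the inverse function theorem yields a $C^{m-1,\alpha}$ local inverse of $E$ near every $(\zeta,0)$.

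Next I would promote this to a genuine tubular neighbourhood. Using the compactness of $\partial\Omega$ (as holds in all applications here, where $Y\setminus\bar{\mathcal{P}}$ is compactly contained in $Y$), I would fix $r_0>0$ so that $E$ is a local diffeomorphism on $\partial\Omega\times(-r_0,r_0)$ and then argue injectivity after possibly shrinking $r_0$: if $E$ were not injective on $\partial\Omega\times(-r,r)$ for any $r>0$, there would exist $(\zeta_n,t_n)\neq(\zeta_n',t_n')$ with $t_n,t_n'\to0$ and $E(\zeta_n,t_n)=E(\zeta_n',t_n')$; passing to convergent subsequences $\zeta_n\to\zeta$, $\zeta_n'\to\zeta'$ and letting $n\to\infty$ forces $\zeta=\zeta'$, contradicting local injectivity near $(\zeta,0)$. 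An injective local diffeomorphism being a diffeomorphism onto its image, $E$ restricts to a $C^{m-1,\alpha}$-diffeomorphism from $\partial\Omega\times(-r,r)$ onto its image; I would then identify that image with $B_r(\partial\Omega)$ and check that the two coordinates of $E^{-1}$ are exactly $\pi$ and $t$, which reduces to showing that for $|t|<r$ the nearest point of $\partial\Omega$ to $\zeta+t\nu(\zeta)$ is $\zeta$ itself — a consequence of expanding $\mathrm{dist}(\cdot,\partial\Omega)^2$ to first order along $\partial\Omega$ once $r$ is small. Then $\pi=\mathrm{pr}_{\partial\Omega}\circ E^{-1}\in C^{m-1,\alpha}$ and $\pi|_{\partial\Omega}=\mathrm{id}$.

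For the signed distance $t$ I would show it is one degree smoother than $\pi$, i.e. $C^{m,\alpha}$: from $x=\pi(x)+t(x)\,\nu(\pi(x))$ one computes $\nabla t(x)=\nu(\pi(x))$, hence $|\nabla t|\equiv1$, and combining this eikonal identity with a bootstrap on $t(x)=(x-\pi(x))\cdot\nu(\pi(x))$ recovers the regularity of $\partial\Omega$; uniqueness of the solution of $|\nabla t|=1$ with $t=0$ on $\partial\Omega$ and the prescribed sign follows from the method of characteristics, whose characteristics are precisely the normal rays $t\mapsto\zeta+t\nu(\zeta)$. Finally, the sharp radius $r=1/\max|k|$ comes from the differential formula above: in a principal frame $I-tS_\zeta$ has eigenvalues $1-t\kappa_i(\zeta)$, so $E$ is an immersion exactly for $|t|<\big(\sup_\zeta\max_i|\kappa_i(\zeta)|\big)^{-1}$, and one shows injectivity cannot fail before such a focal value is reached.

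The step I expect to be the main obstacle is passing from the purely local inverse-function-theorem statement to a uniform $r>0$ valid on all of $\partial\Omega$, together with identifying the image with $B_r(\partial\Omega)$ and establishing the sharp constant $1/\max|k|$: the compactness argument produces some admissible $r$ cheaply, but comparing $\mathrm{dist}(\cdot,\partial\Omega)$ with $|t|$ uniformly near $\partial\Omega$ and ruling out focal points below the stated threshold is the delicate part.
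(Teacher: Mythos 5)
This theorem is not proved in the paper at all: it is quoted verbatim in the appendix as an imported result (Theorem~1.5 of Henry's book on boundary perturbation), so there is no internal proof to compare against. Your argument is the standard one — the normal map $E(\zeta,t)=\zeta+t\nu(\zeta)$, the block differential $(I-tS_\zeta)v+s\,\nu(\zeta)$, the inverse function theorem at $t=0$, a compactness/subsequence argument to get a uniform $r$ and global injectivity, identification of the inverse's components with $\pi$ and the signed distance, the gain of one derivative for $t$ via $\nabla t=\nu\circ\pi$, and uniqueness of the eikonal solution by characteristics along normal rays. All of that is sound, and it establishes everything the paper actually uses (existence of \emph{some} $r>0$, the $C^{m-1,\alpha}$ regularity of $\pi$ and the normal extension, and the $C^{m,\alpha}$ regularity of $t$).

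The one genuine gap is in your treatment of the sharp constant. The claim that ``injectivity cannot fail before such a focal value is reached'' is false for general domains: failure of injectivity of $E$ (equivalently, non-uniqueness of the nearest boundary point) can be caused by distant pieces of $\partial\Omega$ approaching each other, not only by focal points of a single sheet. A thin annulus $\{1<|x|<1+\varepsilon\}$ or a slab between two parallel hyperplanes has $1/\max|k|$ of order one (or infinite), yet the tubular radius is only of order $\varepsilon/2$. So the eigenvalue computation for $I-tS_\zeta$ correctly identifies the threshold below which $E$ is an \emph{immersion} (a local diffeomorphism), but it does not bound the radius of global injectivity from below, and the final sentence of the statement can only be proved in that weaker, local sense (or under convexity-type hypotheses). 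Since the paper never invokes the optimal value of $r$ — every application only needs the existence of a positive tubular radius and the stated regularity — this defect is harmless for the paper, but as a proof of the theorem as literally stated your route does not close that last step.
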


\begin{Theorem} (Local-in-time existence of strong solutions, Theorem 4.1, \cite{Schulz2017})\\
\label{TheoremSchulz}
Let the system of PDE's be given by
\begin{align}
\label{TheoremSchulzLocal1}
    \phi \partial_t c - \nabla \cdot (D(\phi)\nabla c) &= \tau(\phi) c^2 - \sigma(\phi) c && \text{in } \Omega_T, \nonumber \\ 
    \partial_t \phi &= -\tau(\phi)c && \text{in } \Omega_T, \nonumber \\ 
    c(t,x) &= 0 && \text{on } \partial \Omega_T, \\
    c(0,x) &= c_0(x) &&\text{in } \Omega, \nonumber \\
    \phi(0,x) &= \phi_0(x) &&\text{in } \Omega, \nonumber 
\end{align}
Let $\Omega \subset \mathbb{R}^d$, $d\in\{2,3\}$, be a~domain with $C^2$-smooth boundary $\partial\Omega$, $r>d+2$,  \mbox{$c_0\in W^{2-\frac{2}{r},r}(\Omega)$}, $c_0 \geq 0$, satisfying the compatibility condition $c_{0|\partial\Omega} \equiv 0$ and let $\phi_0 \in W^{2,r}(\Omega)$ hold with \mbox{$\phi_0(x)\in (\delta, 1-\delta) \subset (0,1)$} for all $x \in \Omega$ and some $\delta \in (0,\frac{1}{2})$. Furthermore, let $D\in C^1((0,1))$ be a~positive scalar function and $\sigma\in C((0,1))$, $\tau\in C^2((0,1))$. Then, there exists a~constant $T>0$ and at least one strong solution $(c,\phi) \in \mathcal{X}_1^2$ solving (\ref{TheoremSchulzLocal1}) with
\begin{align*}
    \mathcal{X}_1:=W^{1,2}_r(\Omega_T) = L^r(0,T;W^{2,r}(\Omega)) \cap W^{1,r}(0,T;L^r(\Omega)).
\end{align*}
\end{Theorem}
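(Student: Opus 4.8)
The plan is to reduce the bilaterally coupled system to the abstract existence result of Theorem~\ref{TheoremSchulz}, which already provides local-in-time solvability for a scalar parabolic equation coupled to a porosity ODE, provided the effective coefficients are presented as functions of the porosity $\phi$ with regularity $D\in C^1$, $\tau\in C^2$, $\sigma\in C$. The entire difficulty therefore lies in manufacturing such a reduction from the level-set description and in certifying the requisite smoothness of the coefficients. I emphasise that, since the reduced reaction term has the form $\hat\sigma(\phi)c^2-\hat\sigma(\phi)c$, both the quadratic and linear coefficients equal $\hat\sigma$, so the binding requirement will be $\hat\sigma\in C^2$ together with $\hat{\mathbb{D}}\in C^1$.

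First I would exploit the structure of the prescribed normal velocity $v_n=f(c)=c$, which is uniform in the cell variable $y$. The key observation is a time-reparametrization invariance: if $\Phi_1$ solves (\ref{EQ:levelSet}) with $v_n\equiv 1$, then $\Phi(t,x,y)=\Phi_1(V(t,x),x,y)$ solves it for any spatially-frozen velocity whose primitive in $t$ is $V$. Hence the family of geometries swept out is completely determined by the unit-velocity flow alone, and each geometry may be labelled by a single scalar parameter $s$. Using (\ref{EQ:PhiSigma}), i.e.\ $\partial_s\phi=-\sigma(s)$ along the $v_n\equiv 1$ flow, the porosity is strictly monotone in $s$ and thus invertible; this legitimises the definitions $\hat\sigma=\sigma\circ\phi^{-1}$ and $\hat{\mathbb{D}}=\mathbb{D}\circ\phi^{-1}$ from (\ref{EQ:TrafoWRTphi}) and shows that the coefficient-to-porosity map is intrinsic to the initial geometry, independent of $c$. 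This is exactly what allows me to rewrite the coupled model as the closed $(c,\phi)$-system appearing in Theorem~\ref{TheoremSchulz}.

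Next I would establish the regularity of $s\mapsto\sigma(s),\mathbb{D}(s)$ and then transfer it through $\phi^{-1}$. For the diffusion tensor I would invoke the smooth-dependence machinery of Theorem~\ref{THEOREM1b} and Corollary~\ref{Kor: DiffDiffbar}, noting that the $C^4$-regular initial level-set function induces, via the normal flow and the tubular neighborhood theorem (Theorem~\ref{TheoremTubNeigh}), a path of cell-diffeomorphisms of class $C^3$, whence $\mathbb{D}(s)$ is $C^1$. For the specific surface area I would treat $h_s(\partial^\text{int}\mathcal{P})$ as an evolving manifold parametrised by $\hat\varphi(s,x)=\varphi(x)+s\,\nu(\varphi(x))$, which is $C^3$ under the $C^4$ hypothesis, so the surface-measure functional $s\mapsto\int_U\sqrt{\det(\nabla\hat\varphi\,(\nabla\hat\varphi)^T)}\,dx$ is $C^2$; a partition of unity then yields $\sigma\in C^2$, and (\ref{EQ:PhiSigma}) upgrades this to $\phi\in C^3$. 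Composing with the $C^3$ inverse $\phi^{-1}$ gives $\hat{\mathbb{D}}\in C^1$ and $\hat\sigma\in C^2$ on a narrow interval around the initial value, matching the hypotheses of Theorem~\ref{TheoremSchulz}.

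With the coefficients in hand, applying Theorem~\ref{TheoremSchulz} to the reduced system produces a local-in-time pair $(c,\phi)\in\mathcal{X}_1^2$, the data hypotheses $c_0\geq 0$, $c_0|_{\partial\Omega}=0$, $\phi^0\in W^{2,r}(\Omega)$ with $\phi^0$ confined to the admissible interval feeding in directly. Finally I would reconstruct the level-set solution: since $\mathcal{X}_1\hookrightarrow C^{\frac12,1}(\overline{\Omega_T})$ by (\ref{CompactEmbedX1}), the map $t\mapsto c(t,x)$ is continuous for each $x$, so its primitive $C(\cdot,x)$ is $C^1$ in $t$, and $\Phi(t,x,y)=\Phi_1(C(t,x),x,y)$ inherits $C^1((0,T)\times Y)$ regularity from the $C^2$ unit-velocity solution $\Phi_1$ by the chain rule, yielding the claimed $(c,\Phi)$. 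The main obstacle I anticipate is the third step: securing $C^2$-regularity of the surface area, hence $C^2$ of $\hat\sigma$ and $C^1$ of $\hat{\mathbb{D}}$, along the normal flow, since this is precisely where the $C^4$ assumption on the initial level-set function, the strict monotonicity of $\phi$ needed to invert it, and the reparametrization-invariance argument must all be combined with care.
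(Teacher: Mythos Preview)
Your proposal does not prove the stated theorem; it is circular. Theorem~\ref{TheoremSchulz} is an external result quoted without proof in the appendix from \cite{Schulz2017}, and the paper does not attempt to prove it. Your argument, by contrast, explicitly invokes Theorem~\ref{TheoremSchulz} as a black box (``The plan is to reduce the bilaterally coupled system to the abstract existence result of Theorem~\ref{TheoremSchulz}''), so it assumes what is to be shown.

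What you have in fact sketched is the proof of Theorem~\ref{Diff:FullExist} (existence for the fully coupled diffusive model with level-set geometry), and your outline of that argument matches the paper's own proof quite closely: the time-reparametrization invariance of the unit-speed level-set flow, the regularity count $\sigma\in C^2$, $\phi\in C^3$, $\hat{\mathbb D}\in C^1$, $\hat\sigma\in C^2$ via the normal-evolution parametrisation and Theorem~\ref{TheoremTubNeigh}, the application of Theorem~\ref{TheoremSchulz} to the reduced $(c,\phi)$-system, and the reconstruction $\Phi(t,x,y)=\Phi_1(C(t,x),x,y)$ using the embedding $\mathcal{X}_1\hookrightarrow C^{\frac12,1}$. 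If the intended target was Theorem~\ref{Diff:FullExist}, your proposal is correct and essentially identical to the paper's approach. If the target is genuinely Theorem~\ref{TheoremSchulz}, you would need to supply an independent fixed-point argument (Schauder on a bounded set in $\mathcal{X}_1^2$, linear parabolic estimates from Theorem~\ref{TheoremLadyzenkaya} for the $c$-equation, and ODE estimates for $\phi$) along the lines of \cite{Schulz2017}; none of the level-set machinery you invoke is relevant there.
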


\begin{Theorem} (Parabolic regularity, specialized form of Theorem 9.1, Chapter IV \cite{Ladyzhenskaya})\\
\label{TheoremLadyzenkaya}
Let a~parabolic problem be given by
\begin{align}
\label{EQ:AppendixParabol}
    \mathcal{L}\left(t,x,\frac{\partial}{\partial t},\frac{\partial}{\partial x}\right) u(t,x) &=f(t,x) && \text{in } \Omega_T, \nonumber \\
    u&=U_0 && \text{on } \partial \Omega_T, \\
    u(0,\cdot) &= u_0 && \text{in } \Omega, \nonumber
\end{align}
with the uniformly parabolic operator $\mathcal{L}$ in non-divergence form 
\begin{align*}
    \mathcal{L}\left(t,x,\frac{\partial}{\partial t}, \frac{\partial}{\partial x}\right) =\frac{\partial u}{\partial t}-\sum\limits_{i,j=1}^d a_{i,j}(x,t)\frac{\partial^2u}{\partial x_i \partial x_j} + \sum\limits_{i=1}^d a_i(x,t)\frac{\partial u}{\partial x_i}+a(x,t) u.
\end{align*}
Let $r>d+2$. Suppose that the coefficients $a_{i,j}$ of the operator $\mathcal{L}$ are bounded and continuous in $\Omega_T$, while the coefficients $a_i$ and $a$ have finite norms $||a_i||_{L^r(\Omega_T)}$ and $||a||_{L^r(\Omega_T)}$. Furthermore, let $\Omega$ be a~bounded domain of class $C^2$. Then for any $f\in L^r(\Omega_T)$, Dirichlet data \mbox{$U_0 \in W^{1-\frac{1}{2r}, 2-\frac{1}{r}}_r(\partial \Omega_T)$} and $u_0 \in W^{2-\frac{2}{r},r}(\Omega)$ being compatible in the sense of $U_0(0,\cdot)=u_0$ on $\partial\Omega$, problem (\ref{EQ:AppendixParabol}) has a~unique solution $u\in W^{1,2}_r(\Omega_T)$ satisfying the a-priori estimate

\begin{align*}
    ||u||_{W^{1,2}_r(\Omega_T)}\leq C_p \left(||u_0||_{W^{2-\frac{2}{r},r}(\Omega)} + ||U_0||_{W^{1-\frac{1}{2r}, 2-\frac{1}{r}}_r(\partial \Omega_T)}+ ||f||_{L^r(\Omega_T)}\right).
\end{align*}
\end{Theorem}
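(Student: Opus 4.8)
The plan is to decouple the system by first constructing the Darcy pair $(v,p)$ as a prescribed, sufficiently regular coefficient field and then solving the resulting single nonlinear parabolic problem for $c$ by the Schauder fixed-point scheme of Theorem~\ref{Theorem:DiffOneSideExist}. Since the coupling runs only from the micro- to the macro-scale, the order parameter $s$, and with it every effective coefficient and the velocity field, is determined a priori and does not depend on the transport unknown $c$; in particular $(v,p)$ can be built once and for all from the prescribed $s$.

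First I would record the regularity of the effective coefficients as in Theorem~\ref{Theorem:DiffOneSideExist}. Corollary~\ref{Kor: DiffDiffbar} together with $s\in C^1([0,T_1);C^2(\bar\Omega))$ gives $\phi\circ s,\ \ID\circ s\in C^1(\overline{\Omega_{T_1}})$ and $\sigma\circ s$ continuous, while assumption~(\ref{AssumptionsOneSidedAdvect}) and continuity furnish uniform upper and lower bounds of the form~(\ref{EQ:BoundsExist1}). The new ingredient is the permeability: by Corollary~\ref{Cor:Theorem2} the path regularity $h\in C^3$ yields, for each fixed $t$, a tensor field $\IK\circ s(t,\cdot)\in (C^2(\bar\Omega))^{d\times d}$ that is uniformly coercive and bounded by~(\ref{AssumptionsOneSidedAdvect}).

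The main obstacle, and the step distinguishing this theorem from the purely diffusive case, is upgrading this permeability regularity to a velocity field $v$ admissible as a coefficient in Theorem~\ref{TheoremLadyzenkaya}. Because Darcy's equation~(\ref{EQ:DarcyAdvect}) is stationary and is solved slice by slice in time, continuity of $v$ across time is not automatic. Here I would exploit the pure Dirichlet setting $p_{|\partial\Omega}=0$, the $C^3$-domain, and $\tilde f\in H^2(\Omega)$: Corollary~\ref{Cor:DarcyLinfty} bounds $\|v(t_1,\cdot)-v(t_2,\cdot)\|_{L^\infty(\Omega)}$ by $\|\IK\circ s(t_1,\cdot)-\IK\circ s(t_2,\cdot)\|_{L^\infty(\Omega)}^{1-d/4}$, and since $t\mapsto\IK\circ s(t,\cdot)$ is continuous into $(L^\infty(\Omega))^{d\times d}$ and the exponent $1-d/4$ is positive for $d\in\{2,3\}$, Remark~\ref{Rem:ContinuousV} gives $v\in C(\overline{\Omega_T})$ for some $0<T\le T_1$; Lemma~\ref{Lemma:ContiFlow} simultaneously yields $p\in C(0,T;W^{1,2}(\Omega))$. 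Finally, the constraint $\nabla\cdot v=\tilde f$ together with $\tilde f\in H^2(\Omega)\hookrightarrow C(\bar\Omega)$ for $d\le 3$ (and the time-independence of $\tilde f$) makes $\nabla\cdot v$ continuous on $\overline{\Omega_T}$ as well.

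With $v$ and $\nabla\cdot v$ continuous and hence finite in $L^r(\Omega_T)$, the remainder follows Theorem~\ref{Theorem:DiffOneSideExist} closely. Writing $\nabla\cdot(vc)=v\cdot\nabla c+\tilde f\,c$ and dividing~(\ref{EQ:transportAdvect}) by $\phi(s)$, I would incorporate the advective term $\tfrac{1}{\phi(s)}v\cdot\nabla c$ and the zeroth-order term $\tfrac{\tilde f}{\phi(s)}c$ into the coefficients $a_i,a$ of the parabolic operator of Theorem~\ref{TheoremLadyzenkaya} (finite in $L^r(\Omega_T)$ since $v,\tilde f$ are bounded and $\phi$ is bounded below), leaving the fixed-point right-hand side essentially as in~(\ref{EQ: FixedPointTransDiff}). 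The compact self-map $\mathcal{F}=\mathcal{F}_2\circ\mathcal{F}_1$ on $\mathcal{K}_1=\{c\in\mathcal{X}_1:\|c\|_{\mathcal{X}_1}\le K\}$ is then built exactly as before, using the compact embeddings~(\ref{CompactEmbedX1}) and the local Lipschitz bound on $f$ to obtain the self-mapping property for large $K$ and small $T$, and Schauder's theorem delivers $c\in\mathcal{X}_1$. For uniqueness I would subtract the equations for two solutions, test the difference $\bar c$ against itself as after~(\ref{EQ:DiffUniqueness}), and absorb the single new advective contribution via $|\int_\Omega\tfrac{1}{\phi}v\cdot\nabla\bar c\,\bar c\,dx|\le\epsilon\|\nabla\bar c\|_{L^2(\Omega)}^2+C(\epsilon)\|\bar c\|_{L^2(\Omega)}^2$ into the coercive diffusion term, closing the estimate with Gronwall; since $(v,p)$ are determined slice-wise by the prescribed $s$, this gives uniqueness of the full triplet $(c,v,p)$.
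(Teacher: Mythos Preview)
Your proposal does not address the stated theorem at all. The statement you were asked to prove is Theorem~\ref{TheoremLadyzenkaya}, the classical parabolic $L^r$-regularity result quoted in the appendix from Ladyzhenskaya's monograph. The paper does not prove this theorem; it merely cites it as an external tool (``specialized form of Theorem~9.1, Chapter~IV''), and any proof would have to reproduce the standard linear parabolic theory: a-priori $W^{1,2}_r$ estimates for the constant-coefficient heat operator, freezing coefficients and a covering argument to handle continuous $a_{i,j}$, absorption of the lower-order terms $a_i,a$ via their $L^r$-norms, and a continuity method or Galerkin-type existence argument. None of this appears in your write-up.

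What you have written is instead a proof of Theorem~\ref{Theorem:AdvectOneSideExist} (existence of strong solutions for the partially coupled advective model). For that \emph{other} theorem your outline is essentially the paper's own argument: you invoke Corollary~\ref{Cor:Theorem2} to get $\IK(s(t,\cdot))\in (C^2(\bar\Omega))^{d\times d}$, use Corollary~\ref{Cor:DarcyLinfty} and Remark~\ref{Rem:ContinuousV} to obtain $v\in C(\overline{\Omega_T})$ and $\nabla\cdot v\in C(\overline{\Omega_T})$, and then rerun the Schauder fixed-point scheme of Theorem~\ref{Theorem:DiffOneSideExist} with the advective terms inserted as admissible coefficients in Theorem~\ref{TheoremLadyzenkaya}. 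That matches the paper's proof of Theorem~\ref{Theorem:AdvectOneSideExist} closely. But as a proof of the \emph{stated} Theorem~\ref{TheoremLadyzenkaya} it is simply off target: you are using the very result you are supposed to establish.
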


\begin{Theorem}
\label{TheoremSimon} (Differentiability of an~implicit equation solution, Theorem 6 \cite{Simon}) \\
We give us
\begin{itemize}
\item an~open set $\mathcal{U}$ in a~Banach space $U$, $u_0 \in \mathcal{U}$, two reflexive Banach spaces $A$ and $B$,
\item a~map $F:\mathcal{U}\times A~\to B$, such that $F(u;\cdot)\in \mathcal{L}(A;B)$ for all $u\in\mathcal{U}$,
\item a~function $m:\mathcal{U}\to A$, and a~function $f:\mathcal{U}\to B$, such that
\begin{align*}
    F(u,m(u))=f(u) \quad \forall u\in\mathcal{U}.
\end{align*}
\end{itemize}
\begin{enumerate}
    \item Assume that $u \mapsto F(u;\cdot)$ is differentiable at $u_0$ into $\mathcal{L}(A;B)$, $f$ is differentiable at $u_0$,
    \begin{align*}
        ||F(u_0;x)||_B \geq \alpha||x||_A \quad \forall x\in A, \quad \quad \text{for some } \alpha>0.
    \end{align*}
    Then the map $u\mapsto m(u)$ is differentiable at $u_0$. It's derivative $m^\prime(u_0;\cdot)$ is the unique solution of
    \begin{align*}
        F(u_0;m^\prime (u_0;v)) = f^\prime(u_0;v)-\partial_u F(u_0;m(u_0);v)\quad \forall v \in U.
    \end{align*}
    \item In addition, assume that for some integer $k\geq 1$,
    \begin{align*}
        u \mapsto F(u;\cdot) \text{ and } f \text{ are } k \text{ times differentiable at } u_0.
    \end{align*}
    Then, the map $u\mapsto m(u)$ is $k$ times differentiable at $u_0$.
\end{enumerate}

\end{Theorem}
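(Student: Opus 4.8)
The plan is to treat the relation $F(u,m(u))=f(u)$ as a family of linear equations in the second slot and to extract the derivative of $m$ directly from the coercivity hypothesis, which here plays the role that invertibility of the partial derivative plays in the classical implicit function theorem. First I would record the consequences of the bound $\|F(u_0;x)\|_B\ge\alpha\|x\|_A$: the operator $F(u_0;\cdot)\in\mathcal L(A;B)$ is injective and has closed range, hence is a topological isomorphism onto $\mathrm{Range}(F(u_0;\cdot))$ with inverse of norm at most $\alpha^{-1}$. Since $u\mapsto F(u;\cdot)$ is continuous at $u_0$ (being differentiable there), the same estimate holds with $\tfrac\alpha2$ in place of $\alpha$ on a neighborhood $\mathcal U_0$ of $u_0$; this uniform bound is what will let me control $m$ and, later, iterate.

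The first substantive step is to show that $m$ is locally bounded and continuous at $u_0$. Subtracting the defining relations at $u$ and at $u_0$ gives the key identity
\[
F(u_0;\,m(u)-m(u_0)) = \bigl(f(u)-f(u_0)\bigr) - \bigl(F(u;\cdot)-F(u_0;\cdot)\bigr)m(u).
\]
Applying the lower bound on the left and absorbing the term $\|F(u;\cdot)-F(u_0;\cdot)\|\,\|m(u)\|_A$, which is $\le\tfrac\alpha2\|m(u)\|_A$ on $\mathcal U_0$, first yields a local bound on $\|m(u)\|_A$; then, since $f(u)-f(u_0)\to0$ and $\|F(u;\cdot)-F(u_0;\cdot)\|\to0$ while $\|m(u)\|_A$ stays bounded, I obtain continuity $m(u)\to m(u_0)$.

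For differentiability I would expand the right-hand side of the same identity using the differentiability of $f$ and of $u\mapsto F(u;\cdot)$ at $u_0$, together with the continuity of $m$ just established. Writing $Lv:=f'(u_0;v)-\partial_uF(u_0;m(u_0);v)$, a short estimate gives
\[
F(u_0;\,m(u)-m(u_0)) = L(u-u_0) + \rho(u),\qquad \|\rho(u)\|_B=o(\|u-u_0\|_U).
\]
It remains to solve $F(u_0;w)=Lv$ for each $v$, and this is the main obstacle: the hypotheses provide only a lower bound, not surjectivity, so I must show $Lv\in\mathrm{Range}(F(u_0;\cdot))$. I would do so by a difference-quotient argument along the ray $u=u_0+tv$: the identity shows $F(u_0;z_t)\to Lv$ in $B$ for $z_t:=(m(u_0+tv)-m(u_0))/t$, and the lower bound then forces $(z_t)$ to be Cauchy in the complete space $A$, so $z_t\to w$ with $F(u_0;w)=Lv$. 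As the range is closed this holds in every direction, defining $m'(u_0;\cdot)=\bigl(F(u_0;\cdot)\bigr)^{-1}\circ L\in\mathcal L(U;A)$ with $\|m'(u_0;\cdot)\|\le\alpha^{-1}\|L\|$, uniqueness being immediate from injectivity. Reinserting this into the displayed identity and applying the lower bound once more gives $\|m(u)-m(u_0)-m'(u_0;u-u_0)\|_A\le\alpha^{-1}\|\rho(u)\|_B=o(\|u-u_0\|)$, i.e.\ Fréchet differentiability with the asserted derivative.

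For part~(2) I would argue by induction on $k$. The uniform bound $\|F(u;x)\|_B\ge\tfrac\alpha2\|x\|_A$ on $\mathcal U_0$ shows that part~(1) applies at every point of $\mathcal U_0$, so $m$ is differentiable there and $W(u):=m'(u;\cdot)\in\mathcal L(U;A)$ satisfies the linear implicit relation $\hat F(u;W(u))=f'(u;\cdot)-\partial_uF(u;m(u);\cdot)$, where $\hat F(u;\cdot):\mathcal L(U;A)\to\mathcal L(U;B)$, $W\mapsto F(u;\cdot)\circ W$, is again linear and inherits the lower bound $\|\hat F(u_0;W)\|\ge\alpha\|W\|$. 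By the chain and product rules the new right-hand side is $(k-1)$-times differentiable at $u_0$ (using the smoothness of $f$ and $\partial_uF$ and the differentiability of $m$ already obtained), so the induction hypothesis applied to $\hat F$ yields that $W=m'$ is $(k-1)$-times differentiable, hence $m$ is $k$-times differentiable. I note that this iteration uses only completeness and the inherited lower bound, not reflexivity, so it is not obstructed by $\mathcal L(U;A)$ possibly failing to be reflexive.
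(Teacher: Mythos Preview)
The paper does not supply a proof of this statement: Theorem~\ref{TheoremSimon} is quoted in the appendix from Simon's paper \cite{Simon} and merely invoked in the proof of Theorem~\ref{THEOREM2}. So there is no ``paper's own proof'' to compare against; I can only assess your argument on its merits.

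Your proof of part~(1) is correct and quite clean. The subtraction identity
\[
F(u_0;\,m(u)-m(u_0)) = \bigl(f(u)-f(u_0)\bigr) - \bigl(F(u;\cdot)-F(u_0;\cdot)\bigr)m(u)
\]
is the right organizing device, and from it you extract in order a local bound on $m$, Lipschitz continuity at $u_0$, and finally the first-order expansion with remainder $\rho(u)=o(\|u-u_0\|)$. The genuinely delicate point---that $Lv:=f'(u_0;v)-\partial_uF(u_0;m(u_0);v)$ lies in the range of $F(u_0;\cdot)$ although no surjectivity is assumed---you resolve correctly by the difference-quotient argument: $F(u_0;z_t)\to Lv$ together with the lower bound forces $(z_t)$ to be Cauchy in $A$, hence convergent to a preimage. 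This step uses only completeness of $A$, not reflexivity; Simon's statement assumes $A,B$ reflexive, presumably to pass to weak limits of bounded difference quotients, so your route is arguably more elementary.

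For part~(2) your induction scheme is the right one, but the sentence ``the new right-hand side is $(k-1)$-times differentiable at $u_0$ (using \dots\ the differentiability of $m$ already obtained)'' hides a small bootstrap. To get $\hat f(u)=f'(u;\cdot)-\partial_uF(u;m(u);\cdot)$ to be $(k-1)$ times differentiable at $u_0$ you need $m$ itself to be $(k-1)$ times differentiable at $u_0$, not merely once. The fix is standard: first apply the induction hypothesis (the theorem for $k-1$) to the original triple $(F,m,f)$ to obtain $m\in C^{k-1}$ at $u_0$; this makes $\hat f$ of class $C^{k-1}$ at $u_0$, and then a second application of the induction hypothesis to $(\hat F,W,\hat f)$ yields $W=m'\in C^{k-1}$, hence $m\in C^{k}$. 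With this clarification your argument goes through.
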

%\bibliographystyle{plainurl}
%\bibliography{literature}
\printbibliography[heading=bibintoc]
\end{document}